\newskip\stdskip                      
   \title{On the graph products of simplicial groups and connected Hopf algebras}
\author[L. Cai]{Li Cai}
\address{Department of Pure Mathematics, Xi'an Jiaotong-Liverpool University, 111 Ren'ai Road, Dushu Lake Higher Education Town, Suzhou 215123, Jiangsu, China}
\email{Li.Cai@xjtlu.edu.cn}
\thanks{Supported in part by National Natural Science Foundation of China (grant no. 11801457). The author would like to thank the referee for the valuable suggestions and comments which greatly improve this article.}
\newtheorem{thm}{Theorem}[section]
\newtheorem{cor}[thm]{Corollary}
\newtheorem{lem}[thm]{Lemma}
\newtheorem{prop}[thm]{Proposition}
\theoremstyle{definition}
\newtheorem{defin}[thm]{Definition}
\theoremstyle{definition}
\newtheorem{exm}[thm]{Example}
\newtheorem{remark}[thm]{Remark}
\theoremstyle{remark}
\def\co{\colon\thinspace}
\def\wt{\widetilde}
\begin{document}

\begin{abstract}    
In this paper we study the classifying spaces of graph products of simplicial groups and connected Hopf algebras over a field, and show that they can be uniformly treated under the framework of polyhedral products. It turns out that these graph products are models of the loop spaces of polyhedral products over a flag complex and their homology, respectively. Certain morphisms between graph products are also considered. In the end we prove the structure theorems of such graph products in the form we need.   
\end{abstract}

\maketitle

\section{Introduction}
By the Seifert-van Kampen theorem, the wedge sum $K(G_1,1)\vee K(G_2,1)$ of classifying spaces of discrete groups $G_1$ and $G_2$ is again the classifying space $K(G_1*G_2,1)$ of the free product $G_1*G_2$. However, the situation in higher dimensions is much more complicated. For instance, 
for $K(\mathbb{Z},2)\vee K(\mathbb{Z},2)$  we have a homotopy equivalence $\Omega \left(K(\mathbb{Z},2)\vee K(\mathbb{Z},2)\right)\simeq \Omega S^3\times S^1\times S^1$ of Moore loop spaces, by Ganea \cite{Gan65}. Together with a result of Selick \cite{Sel77}, for every prime $p>3$, the $p$-primary torsion parts in the homotopy group $\pi_*(K(\mathbb{Z},2)\vee K(\mathbb{Z},2))$ are (infinitely many copies of) $\mathbb{Z}/p$.

Polyhedral products in the sense of \cite{BBCG10} give a much larger class of spaces, including the wedge sums and 
cartesian products, as well as an important class of manifolds called moment-angle manifolds, which is the main class of compact complex manifolds which are not K\"{a}hler (see \cite{BM06}). The homotopy decomposition of loop spaces of polyhedral products have been well studied (see \cite{The18}, \cite{PT19}). 

In this work we consider the classifying space $\overline{W}(\underline{G}^K)$ of the graph product $\underline{G}^K$ of simplicial groups $G_1,\ldots, G_m$, where $K$ is the flag complex obtained by filling all triangles and simplices of higher dimensions from the graph. It turns out that $\overline{W}(\underline{G}^K)$ is homotopy equivalent to the polyhedral product $\underline{\overline{W}(G)}^K$ of  $\overline{W}(G_1),\ldots, \overline{W}(G_m)$ with respect to $K$ (see Theorem \ref{thm:weakequi}).\footnote{Here $\underline{X}^K$ is a simplified notation of $\underline{(X,*)}^K$. See Section \ref{sec:owa} for more details.} This is a generalization of a result of Stafa \cite{Sta15} (see Corollary \ref{Cor:Sta15}). Likewise, in the category $\mathcal{H}_k^c$ of connected Hopf algebras over a field $k$, the bar construction $\bar{B}(\underline{A}^K)$ of the graph product of objects $A_1,\ldots, A_m$ in $\mathcal{H}_k^c$ is given by the polyhedral product $\underline{\bar{B}(A)}^K$ of simplicial vector spaces $\bar{B}(A_i)$, $i=1,\ldots, m$ (see Theorem \ref{thm:isophi}).  

Let $k\underline{G}^K$ be the simplicial group ring over $k$ such that the face and degeneracy maps are $k$-linear. 
As a result, we have the isomorphism $H_*(k\underline{G}^K)\cong\underline{A}^K$ in $\mathcal{H}_k^c$, with $A_i=H_*(kG_i)$ where $G_i$ is reduced, $i=1,\ldots, m$. When $G_i=G(X_i)$ is the Kan construction on a simply connected $CW$ complex $X_i$, $i=1,\ldots,m$, we get a result of  Dobrinskaya \cite{Dob09} on the Pontryagin algebra of the loop space $\Omega\underline{X}^K$. 

Let $H$ be the kernel of the morphism $\underline{G}^K\to\prod_{i=1}^mG_i$ of simplicial groups (the abelianization with respect to $K$). Using Milnor's construction \cite{Mil72} which generalizes Hilton's result on the homotopy type of the loop space of a wedge sum of spheres, we give a topological interpretation of a result of Panov and Veryovkin \cite{PV19} on the generators of $H$ (see Corollary \ref{cor:PV}). We believe that, at least in some cases, this is dual to the inclusion of bottom cells in the homotopy fiber $F$ of the inclusion $|\underline{\overline{W}(G)}^K|\to\prod_{i=1}^m|\overline{W}(G_i)|$ (see Example \ref{exm:Medrano}). Motivated by the Pontryagin algebra of the loop space of a moment-angle complex (see \cite{GPTW16}), in the second part we consider $A'=H_*(kH)$ (with $G_i$ reduced, $i=1,\ldots,m$).   Working in the category $\mathcal{H}_k^c$, $A'=\underline{A}^K\square_{\otimes_{i=1}^m A_i}k$ is the kernel of the abelianization $\underline{A}^K\to\otimes_{i=1}^mA_i$, and we prove an isomorphism
\[
Tor^{A'}_{s,*}(k,k)\cong \bigoplus_{I\subset\{1,\ldots,m\}}\wt{H}_{s-1}(K_{I};k)\otimes\otimes_{i\in I}J(A_i)\] 
with $s\geq 1$, $K_I$ the corresponding full subcomplex and $J(A_i)=\mathrm{ker}(A_i\to k)$ the augmentation ideal. This is a generalization of a theorem of Vylegzhanin \cite{Vyl22}. The action of $\otimes_{i=1}^mA_i$ on $Tor^{A'}_{*,*}(k,k)$ 
is also given explicitly in Theorem \ref{thm:basis}. A minimal set of generators of $A'$ which spans $Tor^{A'}_{1,*}(k,k)$ is given in Corollary 
\ref{Cor:GPTW16}. 

Consequently the Euler-Poincar\'{e} series of $A'$ (over $k$) with variable $t$ is given by 
\[\frac{1}{P(A'; t)}=1+\sum_{I\subset\{1,\ldots,m\}\atop I\not=\emptyset}
		\sum_{n=1}^{\infty}(-1)^n
		\mathrm{dim}\wt{H}_{n-1}(K_{I};k)\prod_{i\in I}P(J(A_i);t).
\] Together with the formula $P(\underline{A}^K;t)=P(A'; t)\prod_{i=1}^mP(A_i;t)$, we see that
$P(\underline{A}^K;t)$ is a rational function of $t$, if each $P(A_i;t)$ is (see \eqref{iso:TorAK} for an explicit formula for $Tor^{\underline{A}^K}_{*,*}(k,k)$).

The structure of this paper is as follows. In Section \ref{sec:owa} we show that the classifying space 
$\overline{W}(\underline{G}^K)$ can be 
reduced to a smaller subcomplex $\underline{\overline{W}(G)}^K$, using Whitehead's Acyclic Theorem. Based on this we show in Section \ref{sec:dec} that the polyhedral product $\underline{G}^K$ gives a model of the loop space $\Omega\underline{X}^K$, with $G_i=G(X_i)$ the Kan construction on $X_i$. The isomorphism $H_*(k\underline{G}^K)\cong \underline{A}^K$ is given in Section \ref{sec:oth}. In Section \ref{sec:loop} we prove a version of Whitehead's Acyclic Theorem in $\mathcal{H}_k^c$ and calculate $Tor^{A'}_{s,*}(k,k)$ using the BBCG spectral sequence in \cite{BBCG17}. Section 
\ref{sec:last} is an appendix in which we prove some well-known facts on the structure theorems of graph products 
of groups and connected Hopf algebras, respectively, which we believe useful for the readers. 

\section{On Whitehead's Acyclic Theorem}\label{sec:owa}
Let $K$ be an abstract simplicial complex with vertices labeled by $1,\ldots,m$, namely $K$ is a collection of subsets of $V=\{1,\ldots,m\}$ (such a subset is called a simplex in $K$), including the empty set, such that whenever $\sigma\in K$ we have $\tau \in K$ if $\tau\subset\sigma$. 
Let $\mathcal{K}$ be the category whose objects are simplices of $K$ and morphisms are inclusion of sets.

Following \cite{PR08,PRV04} , let $\mathcal{M}$ be a monoidal category with a functor 
$\otimes\co M\times M\to M$ and an object $1$ such that natural isomorphisms
\[
          M\otimes(M'\otimes M'')=(M\otimes M')\otimes M'', \quad 1\otimes M=M=M\otimes 1
\]   
hold for objects $M,M',M''$ of $\mathcal{M}$. 
Let $(\underline{M},\underline{M}')=(M_i,M'_i)_{i=1}^m$ be $m$ pairs of objects in $\mathcal{M}$, 
together with morphisms $\iota_i\co M_i'\to M_i$, $i=1,\ldots,m$. 
Consider a functor $Z\co \mathcal{K}\to \mathcal{M}$ such that $Z(\sigma)=\otimes_{i=1}^m Y_i$, where $Y_i=M_i$ if $i\in\sigma$ and 
otherwise $Y_i=M'_i$; a morphism $Z(\tau\to\sigma)$ is induced by that of each component which is either
$\mathrm{id}_{M_i}$ or $\iota_i$. 
The colimit of $Z$, if it exists, is called a \emph{polyhedral product} in $\mathcal{M}$. 
Clearly there is a natural morphism 
\begin{equation}
Z\to\otimes_{i=1}^mM_i. \label{def:Ab}
\end{equation}
 In particular, we shall use the notation $\underline{M}^K$ to denote the colimit, if $1$ is initial in $\mathcal{M}$ and 
 $M_i'=1$ for all $i=1,\ldots,m$.

We say that the simplicial complex $K$ is \emph{flag}, if it is the maximal one with the same $1$-skeleton. More precisely, any subset $\sigma\subset\{1,\ldots,m\}$ such that $\{i,j\}\in K$ for every pair $i,j\in\sigma$ implies 
$\sigma\in K$. Unless otherwise stated, $K$ will be a flag complex in what follows. A \emph{full} \emph{subcomplex}
 $K_I$ of $K$ with respect to $I\subset\{1,\ldots,m\} $ is given by the collection $\{\sigma\in K\mid\sigma\subset I\}$. Clearly $K_I$ is also flag.

Let $\underline{G}^K$ be the polyhedral product in the category $\mathcal{G}$ 
of discrete groups, endowed with direct product and $1=1_{\mathcal{G}}$ the trivial group. 
It is the colimit of $\mathcal{K}\to \mathcal{G}$, where $\underline{G}=(G_i,1)_{i=1}^m$. 
Since $K$ is determined by its $1$-skeleton $K^1$, it turns out that $G$ is the graph product of $G_i$ along
the simplicial graph $K^1$, namely the free product $\coprod_{i=1}^mG_i$ subject to the 
relations $[G_i,G_j]=1$ whenever 
$\{i,j\}\in K^1$ (see Proposition \ref{prop:preG}).   

Let $\mathcal{S}_*$ be the category of pointed simplicial sets, endowed with cartesian products and a given point. 
Recall that a \emph{simplicial group} is a simplicial object in $\mathcal{G}$. For a simplicial group $G$, the 
simplicial classifying space $\overline{W}(G)$ of Eilenberg-MacLane is a simplicial set whose $n$-simplices are in the 
form $(g^{n-1},g^{n-2},\ldots, g^0)$ with $g^i\in G$ a simplex of dimension $i$. Moreover, for the face and degeneracy maps, 
$d_j(g^{n-i})_{i=1}^n$ is given by
\[
       \begin{cases} (g^{n-i})_{i=2}^n &   j=0\\
       d_{j-1}g^{n-1},d_{j-2}g^{n-2},\ldots,   (d_0g^{n-j})\cdot g^{n-j-1}, g^{n-j-2},\ldots, g^0) & 0<j<n\\
       (d_{n-2}g^{n-1}, \ldots, d_{n-i-1}g^{n-i}, \ldots, d_1g^2, d_0g^1) & j=n,
       \end{cases}
\]
and $s_j((g^{n-i})_{i=1}^n)=(s_{j-1}g^{n-1},\ldots, s_0g^{n-j},1,g^{n-j-1},\ldots, g^0)$, $j=0,\ldots,n$.

Notice that when $L\subset K$ is a full subcomplex, we have a canonical inclusion
$\underline{G}^L\to \underline{G}^K$ of polyhedral products in $\mathcal{G}$, whence the canonical inclusion
$\overline{W}(\underline{G}^L)\to \overline{W}(\underline{G}^K)$ of classifying spaces. 
Likewise, we have a canonical inclusion $\underline{\overline{W}(G)}^L\to\underline{\overline{W}(G)}^K$
of polyhedral products in $\mathcal{S}_*$, with $\underline{\overline{W}(G)}=(\overline{W}(G_i),*)_{i=1}^m$. 

Here is our main theorem in this section, where we write an $n$-simplex of $\underline{\overline{W}G}^K$
 as $(x_i)_{i=1}^m$, $x_i\in\overline{W}(G_i)$, i.e., 
 being identified with its image in $\prod_{i=1}^m\overline{W}(G_i)$.  
	\begin{thm}\label{thm:weakequi}
	Let $K$ be a flag complex and let $G_i$ be a simplicial group, $i=1,\ldots,m$. 
	Then the simplicial inclusion
	\begin{equation}
	      \phi\co \underline{\overline{W}(G)}^{K}\stackrel{\simeq}{\longrightarrow}
	       \overline{W}(\underline{G}^{K}). \label{def:phi}
	\end{equation}
	that sends an $n$-simplex $(x_k)_{k=1}^m$, 
	$x_k=(g^{n-i}_k)_{i=1}^{n}\in \overline{W}(G_k)$, to $(g^{n-i})_{i=1}^{n}$ with 
	$g^{n-i}=g^{n-i}_1g^{n-i}_2\cdots g^{n-i}_m\in \underline{G}^K$, is 
	a weak homotopy equivalence.
	Moreover, $\phi$ is natural in the following sense: let $(K,L)$ be a pair 
	of flag complexes with 
	$L\subset K$ a full subcomplex, and let $f_i\co G_i\to\wt{G}_i$ be a morphism of simplicial 
	groups, $i=1,\ldots,m$,
	which induce a morphism $\underline{f}^K\co\underline{G}^K\to
	\underline{\wt{G}}^{K}$ (resp. $\underline{f}^L\co\underline{G}^L\to
	\underline{\wt{G}}^{L}$) of
	polyhedral products of groups,
        then the diagram
	\begin{equation}
	\xymatrix{
	& \underline{\overline{W}\wt{G}}^L \ar[rr]^{\wt{\phi}}\ar[dd]& 
	&\overline{W}(\underline{\wt{G}}^L)\ar[dd]\\
	\underline{\overline{W}{G}}^L \ar[rr]^{\phi}\ar[dd]\ar[ru]^{\underline{\overline{W}f}^L}&
	&\overline{W}(\underline{G}^L)\ar[dd]\ar[ru]^{\overline{W}\underline{f}^L}&\\
	& \underline{\overline{W}\wt{G}}^K \ar[rr]^{\wt{\phi}} & &\overline{W}(\underline{\wt{G}}^K)\\
	\underline{\overline{W}{G}}^K \ar[rr]^{\phi}\ar[ru]^{\underline{\overline{W}f}^K}&
	&\overline{W}(\underline{G}^K)\ar[ru]^{\overline{W}\underline{f}^K}&
	}\label{cd:GLK}
        \end{equation}
        commutes, in which all vertical morphisms are canonical inclusions.
	\end{thm}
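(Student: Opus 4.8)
The plan is to read $\phi$ as the canonical comparison map from a colimit of classifying spaces to the classifying space of a colimit of groups, and to prove it is a weak equivalence by induction on the vertices of $K$, gluing along the flag structure. First I would record the two facts that make the low-dimensional and the ``cone'' cases transparent. On one hand, the Eilenberg--MacLane functor $\overline{W}$ commutes with finite direct products, so that $\overline{W}(\prod_{i\in\sigma}G_i)=\prod_{i\in\sigma}\overline{W}(G_i)$ for each simplex $\sigma\in K$; on this piece $\phi$ is literally the product-compatibility isomorphism and in particular an isomorphism of simplicial sets. On the other hand, the polyhedral product $\underline{\overline{W}(G)}^K$ is a colimit over the poset $\mathcal{K}$ taken along inclusions, i.e.\ along monomorphisms of pointed simplicial sets, hence is assembled by gluing these pieces along cofibrations.

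For the inductive step I would delete a single vertex $v$. Writing $L=K_{V\setminus\{v\}}$ for the full subcomplex on the remaining vertices and $K_{N(v)}=\mathrm{lk}_K(v)$ for the link (a full subcomplex, since $K$ is flag), the flag hypothesis gives the standard amalgam decomposition of the graph product
\[
\underline{G}^K=\underline{G}^{L}\ast_{\underline{G}^{K_{N(v)}}}\bigl(G_v\times\underline{G}^{K_{N(v)}}\bigr),
\]
matching, on the other side, the pushout of pointed simplicial sets
\[
\underline{\overline{W}(G)}^K=\underline{\overline{W}(G)}^{L}\cup_{\underline{\overline{W}(G)}^{K_{N(v)}}}\bigl(\overline{W}(G_v)\times\underline{\overline{W}(G)}^{K_{N(v)}}\bigr),
\]
where the second term on each side corresponds to the closed star $\{v\}\ast\mathrm{lk}_K(v)$. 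By the inductive hypothesis $\phi$ is a weak equivalence over $L$ and over $K_{N(v)}$, and hence (using the product-compatibility of $\overline{W}$ above) over the closed star as well. Since the polyhedral-product decomposition is a pushout along a cofibration, the gluing lemma for weak equivalences reduces the inductive step to a single comparison: that $\overline{W}$ applied to the amalgamated product agrees, up to weak equivalence, with the homotopy pushout of the classifying spaces of its three factors.

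This last comparison is the main obstacle, and it is exactly where Whitehead's Acyclic Theorem enters. The point is that $\overline{W}$ does \emph{not} send the strict pushout of groups to the strict pushout of classifying spaces; one must show instead that the natural map from the homotopy pushout of $\overline{W}(\underline{G}^{L})\leftarrow\overline{W}(\underline{G}^{K_{N(v)}})\rightarrow\overline{W}(G_v\times\underline{G}^{K_{N(v)}})$ to $\overline{W}(\underline{G}^K)$ is a weak equivalence. This is a van~Kampen-type statement for simplicial groups, and its honest content is that the amalgamating clique subgroup $\underline{G}^{K_{N(v)}}$ injects into each factor by the graph-product normal form (again a consequence of flagness), so that the relevant relative bar construction is acyclic. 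Concretely, I would verify that the pair $\bigl(\overline{W}(\underline{G}^K),\,\underline{\overline{W}(G)}^K\bigr)$ is acyclic and that $\phi$ induces an isomorphism on fundamental groups, the acyclicity being exhibited by a contracting homotopy built from the normal form, and then conclude by Whitehead's theorem; the flag condition is used essentially at this step and nowhere can it be dropped.

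Finally, the naturality of the cube \eqref{cd:GLK} is formal. The map $\phi$ is defined simplexwise by the multiplication morphisms $(g^{n-i}_1,\ldots,g^{n-i}_m)\mapsto g^{n-i}_1\cdots g^{n-i}_m$, which are visibly compatible both with the homomorphisms $f_i\co G_i\to\wt{G}_i$ (inducing the morphisms $\underline{f}^{(-)}$ and $\overline{W}\underline{f}^{(-)}$) and with the full-subcomplex inclusion $L\hookrightarrow K$ (inducing the canonical vertical inclusions). Hence each face of the cube commutes by direct inspection at the level of $n$-simplices, and no further argument beyond the weak equivalence established above is required.
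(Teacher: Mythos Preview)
Your approach is essentially the paper's: induction on the number of vertices, a star/link decomposition of the flag complex, the Gluing Lemma on the polyhedral-product side, and the Kan--Thurston form of Whitehead's Acyclic Theorem on the classifying-space side. Your decomposition (delete $v$, take $L=K_{V\setminus\{v\}}$ and $\{v\}\ast\mathrm{lk}_K(v)$) is exactly the one the paper extracts in its Lemma~\ref{lem:L11}, and your use of product-compatibility of $\overline{W}$ for the simplex case matches the paper's base case.

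Two points where your write-up drifts from the paper and should be tightened. First, the paper \emph{cites} the Kan--Thurston theorem (Theorem~\ref{thm:Whitehead}) as a black box: for monomorphisms $G_3\hookrightarrow G_1,G_2$ of simplicial groups, the map $\overline{W}G_1\coprod_{\overline{W}G_3}\overline{W}G_2\to\overline{W}(G_1\ast_{G_3}G_2)$ is a weak equivalence. Your third paragraph instead sketches an ad hoc argument, ``verify the pair $(\overline{W}(\underline{G}^K),\underline{\overline{W}(G)}^K)$ is acyclic and $\phi$ is a $\pi_1$-isomorphism, with a contracting homotopy from the normal form.'' This is muddled: acyclicity of the pair plus a $\pi_1$-isomorphism does not by itself give a weak equivalence for non-simply-connected spaces (these are not Eilenberg--MacLane spaces when the $G_i$ are genuine simplicial groups), and it is not clear how a normal-form contraction on $\underline{G}^K$ transports to one on the bar construction $\overline{W}$. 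Just invoke Kan--Thurston; the only hypothesis to verify is that $\underline{G}^{K_{N(v)}}\to\underline{G}^L$ and $\underline{G}^{K_{N(v)}}\to G_v\times\underline{G}^{K_{N(v)}}$ are monomorphisms, which follows from the retractions onto full subcomplexes.

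Second, the paper explicitly checks that $\phi$ is a \emph{simplicial} map, and this is not entirely formal: compatibility with the inner face maps $d_j$, $0<j<n$, requires the identity
\[
\prod_{i=1}^m(d_0g_i^{n-j})g_i^{n-j-1}=\Bigl(\prod_{i=1}^m d_0g_i^{n-j}\Bigr)\Bigl(\prod_{i=1}^m g_i^{n-j-1}\Bigr),
\]
which holds precisely because the support $\{i:g_i^\ast\neq 1\}$ of any simplex of $\underline{\overline{W}(G)}^K$ is a simplex of $K$, so the relevant factors commute in $\underline{G}^K$. You should include this check; it is where the flagness of $K$ first enters, before the induction.
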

	The proof is based on an induction using the 
	following form of Whitehead's Acyclic Theorem, together with a lemma below.
	\begin{thm}[Kan--Thurston \cite{KT76}]\label{thm:Whitehead}
	Let $G_1, G_2$ and $G_3$ be three simplicial groups together with two simplicial monomorphisms 
	$G_3\to G_i$, $i=1,2$. Then the inclusions 
	$\overline{W}(G_i)\to\overline{W}(G_1\ast_{G_3}G_2)$, $i=1,\ldots,3$,
	induce a weak homotopy equivalence
	\[i_{\overline{W}}\co \overline{W}G_1\coprod_{\overline{W}G_3}\overline{W}G_2
	\stackrel{\simeq}{\longrightarrow} \overline{W}(G_1\ast_{G_3} G_2).\]
	Here $G_1\ast_{G_3} G_2$ is the amalgam of $G_1, G_2$ along $G_3$.
	\end{thm}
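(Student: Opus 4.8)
The plan is to reduce the statement to the classical (discrete) case by a levelwise, bisimplicial argument, isolating the one place where the monomorphism hypothesis does real work. First I would check that $i_{\overline{W}}$ is well defined. A monomorphism of simplicial groups is a levelwise injection, so from the explicit description of the simplices of $\overline{W}$ the induced maps $\overline{W}G_3\to\overline{W}G_i$ are injections of simplicial sets, i.e.\ cofibrations; hence the ordinary pushout $\overline{W}G_1\coprod_{\overline{W}G_3}\overline{W}G_2$ already computes the homotopy pushout, and the three inclusions assemble into $i_{\overline{W}}$. Next I would invoke that for a simplicial group $G$ the space $\overline{W}G$ is naturally weakly equivalent to the diagonal of the bisimplicial set $[q]\mapsto N(G_q)$, the nerve of the discrete group $G_q$. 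Since colimits of simplicial groups are computed levelwise we have $(G_1\ast_{G_3}G_2)_q=(G_1)_q\ast_{(G_3)_q}(G_2)_q$, and since monomorphisms are levelwise injections, in each degree $q$ we are looking at an amalgam of discrete groups along a common subgroup. Because homotopy colimits commute with the diagonal, the homotopy pushout on the left of $i_{\overline{W}}$ is the diagonal of the levelwise homotopy pushouts; by the realization lemma (a levelwise weak equivalence of bisimplicial sets is a weak equivalence on diagonals) it therefore suffices to prove, for each fixed $q$ and discrete groups $C\hookrightarrow A$, $C\hookrightarrow B$, that
\[
N(A)\coprod_{N(C)}N(B)\longrightarrow N(A\ast_C B)
\]
is a weak homotopy equivalence.

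This discrete statement is classical, and is the heart of the matter. By the Seifert--van Kampen theorem the map induces an isomorphism on $\pi_1$, both fundamental groups being $A\ast_C B$. Since $N(A\ast_C B)$ is a $K(A\ast_C B,1)$, it remains to show the homotopy pushout on the left is aspherical. This is exactly where the injectivity of $C\to A$ and $C\to B$ is used: the amalgam acts on its Bass--Serre tree $T$ with vertex and edge stabilizers the conjugates of $A,B$ and $C$, so the universal cover of $N(A)\coprod_{N(C)}N(B)$ is a tree of contractible spaces (copies of the contractible covers $EA,EB,EC$) patterned on the contractible tree $T$, hence contractible. Equivalently, injectivity produces the Mayer--Vietoris sequence
\[
\cdots\to H_n(C)\to H_n(A)\oplus H_n(B)\to H_n(A\ast_C B)\to H_{n-1}(C)\to\cdots
\]
with arbitrary (local) coefficients, which matches the two sides of the comparison map. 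Either way, both spaces are $K(A\ast_C B,1)$'s and the $\pi_1$-isomorphism is a weak equivalence.

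The step I expect to be the main obstacle is precisely this asphericity, since it is the only point at which the monomorphism hypothesis is essential and without which the conclusion genuinely fails; the remaining work — the bisimplicial bookkeeping, the identification of $\overline{W}G$ with $\mathrm{diag}\,N(G_\bullet)$, and the passage through the realization lemma — is formal. One should still verify that the levelwise discrete equivalences assemble into the single map $i_{\overline{W}}$, but this follows from the naturality in $G$ of the equivalence $\overline{W}G\simeq\mathrm{diag}\,N(G_\bullet)$, and the naturality square of Theorem~\ref{thm:weakequi} would be obtained in the same way.
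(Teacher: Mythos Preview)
The paper does not supply its own proof of this statement: Theorem~\ref{thm:Whitehead} is quoted with attribution to Kan--Thurston \cite{KT76} and used as a black box in the induction for Theorem~\ref{thm:weakequi}. So there is no in-paper argument to compare against.

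Your outline is correct and is essentially the standard route. Two places deserve a little more care. First, the identification of $\overline{W}G$ with $\mathrm{diag}\,([q]\mapsto N(G_q))$ is a theorem rather than a tautology: the $n$-simplices of $\overline{W}G$ are $G_{n-1}\times\cdots\times G_0$, whereas those of the diagonal are $(G_n)^n$, so one is invoking a natural weak equivalence (as in Kan's original work or the later Cegarra--Remedios comparison), not an isomorphism. Second, the sentence ``homotopy colimits commute with the diagonal'' is doing a lot of work and should be unpacked: what you actually need is that (i) the strict pushout $\overline{W}G_1\coprod_{\overline{W}G_3}\overline{W}G_2$ is a homotopy pushout because the legs are injections of simplicial sets, (ii) the same holds levelwise for the nerves, and (iii) the natural equivalence $\overline{W}G\simeq\mathrm{diag}\,N(G_\bullet)$ then lets you compare the two homotopy pushouts, after which the realization lemma reduces you to the levelwise (discrete) statement. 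With those two points made precise the argument goes through; the Bass--Serre step for the discrete case is exactly where the monomorphism hypothesis bites, as you say.
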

         Since $i_{\overline{W}}$ is defined by canonical inclusions 
         $\overline{W}(G_i)\to\overline{W}(G_1\ast_{G_3} G_2)$, it is also natural. More explicitly, 
	if we have morphisms $\alpha_i\co G_i\to\wt{G}_i $ of simplicial groups 
	$i=1,2,3$, together with
	simplicial monomorphisms $\wt{G}_3\to \wt{G}_i$, $i=1,2$, then we have a 
	commutative diagram
	\[
	\begin{CD}
	\overline{W}G_1\coprod_{\overline{W}G_3}\overline{W}G_2
	@>i_{\overline{W}}>> \overline{W}(G_1\ast_{G_3} G_2)\\
	@VVV                        @VVV \\
	\overline{W}\wt{G}_1\coprod_{\overline{W}\wt{G}_3}\overline{W}\wt{G}_2
	@>i_{\overline{W}}>> \overline{W}(\wt{G}_1\ast_{\wt{G}_3} \wt{G}_2),
	\end{CD}
	\]
        and Theorem \ref{thm:Whitehead} implies that the horizontal morphisms are weak homotopy equivalences.
\begin{lem}\label{lem:L11}
	Let $K$ be a flag complex with at least two vertices. 	
	Then either $K$ is a simplex, or $K$ admits a splitting of two non-empty 
	full subcomplexes $K',K''$, so that their intersection $K'\cap K''$ (may be empty) 
	is a full subcomplex of $K$. 
\end{lem}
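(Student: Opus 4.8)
The plan is to dispose of the simplex case at once and then, in the non-simplex case, to build the splitting by deleting a single vertex from each of two non-adjacent vertices. Writing $V=\{1,\dots,m\}$ for the vertex set, the goal is to exhibit $K=K'\cup K''$ with $K'=K_{I'}$ and $K''=K_{I''}$ for suitable non-empty proper subsets $I',I''\subset V$. I would first record the free part of the assertion: once $K'$ and $K''$ are taken to be full subcomplexes, the claim that $K'\cap K''$ is full needs no argument, since a simplex lies in $K_{I'}\cap K_{I''}$ iff it is contained in both $I'$ and $I''$, i.e. in $I'\cap I''$, so that $K_{I'}\cap K_{I''}=K_{I'\cap I''}$.

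Next I would isolate the only place where the hypothesis is used, namely the existence of a non-edge. By definition "$K$ is a simplex" means $V\in K$. If this fails, then flagness forbids all pairs from being edges — otherwise the flag condition applied to $\sigma=V$ would force $V\in K$ — so there are distinct vertices $i,j$ with $\{i,j\}\notin K$. Given such a pair I would set $I'=V\setminus\{i\}$ and $I''=V\setminus\{j\}$, giving $K'=K_{I'}$ and $K''=K_{I''}$. These are genuinely proper subcomplexes, since $\{i\}\in K$ is a simplex omitted by $K'$ and likewise $\{j\}$ is omitted by $K''$, and they are non-empty because $m\geq2$. Moreover $I'\cup I''=V$ while $I'\cap I''=V\setminus\{i,j\}$, so $K'\cap K''=K_{V\setminus\{i,j\}}$, which is empty exactly when $V=\{i,j\}$; this accounts for the parenthetical clause.

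The decisive step is to verify that the union is all of $K$. A simplex $\sigma\in K$ fails to lie in $K'$ precisely when $i\in\sigma$, and fails to lie in $K''$ precisely when $j\in\sigma$; hence the only way $\sigma$ could avoid $K'\cup K''$ is to contain both $i$ and $j$. But then downward closure of $K$ would put the face $\{i,j\}$ into $K$, contradicting the choice of $i,j$. Thus every simplex of $K$ lies in $K'$ or $K''$, and $K=K'\cup K''$ as required.

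I expect the main obstacle to be conceptual rather than computational: recognizing that flagness is what is really needed to guarantee the non-edge, not merely downward closure. The boundary of a triangle — a non-flag complex with complete $1$-skeleton that is nevertheless not a simplex — admits no such splitting, since every bipartition of its three vertices leaves a straddling edge; this confirms that the flag hypothesis cannot be dispensed with and that the construction must be anchored at an honest non-edge.
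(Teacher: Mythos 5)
Your proof is correct, and it takes a genuinely different route from the paper's. The paper anchors the splitting at a \emph{vertex} rather than at a non-edge: it sets $K'=\mathrm{Star}_v$ and $K''$ equal to the subcomplex of simplices not containing $v$, observes that if this splitting were trivial for every $v$ then flagness would force $K$ to be a simplex, and then argues separately (again via flagness) that $K'\cap K''$ is a full subcomplex. Your version --- deleting one endpoint of a non-edge $\{i,j\}$ from each piece --- makes all the fullness claims automatic, since each piece is by construction the full subcomplex on an explicit vertex set and $K_{I'}\cap K_{I''}=K_{I'\cap I''}$; flagness enters only in the single observation that a non-simplex flag complex must have a non-edge, which is the same place the paper uses it. Your closing remark about the boundary of a triangle is a fair diagnosis of why the flag hypothesis cannot be dropped. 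The one thing the paper's choice buys that yours does not: in the appendix (proof of Theorem \ref{thm:normal}) the decomposition from this lemma is further normalized so that one piece is the star of a single vertex $v_2$, i.e.\ $K_2=\{v_2\}\ast K_3$ with $K_3$ the link of $v_2$, and the argument there exploits the fact that coset representatives of $\underline{G}^{K_3}$ in $\underline{G}^{K_2}$ are then single letters from $G_{v_2}$; your two pieces, each omitting only one vertex of $K$, do not carry that extra structure. For the lemma as stated, and for the inductions in Theorems \ref{thm:weakequi} and \ref{thm:isophi}, your argument is complete and, if anything, tidier.
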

\begin{proof}
	Let $v$ be a vertex of $K$. We choose 
	$K'_v=\mathrm{Star}_v$ the star of $v$ in $K$, and let $K''_v=\mathrm{Star}_v^C$ be 
	its complement, 
	as the union of those simplices	which does not contain $v$ as a vertex. 
	If $K''_v$ is empty, then $K$ is 
	a cone with apex $v$, so we choose another vertex $v'$ and repeat the procedure above.

	If $K''_v$ is empty for all vertices $v$, then any pair of two vertices of $K$ is connected by an 
	edge, whence $K$ is a simplex as being flag. Otherwise there exists a non-trivial pair 
	$K'_v$ and $K''_v$. Clearly they are full subcomplexes. 
	Now suppose $L=K'\cap K''$ is non-empty (otherwise we are done). 
	If $L$ is not full, there exists a non-empty simplex of $K$, 
	whose vertices belong to $L$, but itself does not. Let $\sigma$ be a minimal one with this 
	property, namely all proper faces of $\sigma$ belong to $L$. We see that the simplex 
	spanned by $v$ and $L$ does not belong to $K$, 
	but all its proper faces do, which is a contradiction
	since $K$ is flag.
\end{proof}

\begin{proof}[Proof of Theorem \ref{thm:weakequi}]
        It is straightforward to check that
        Diagram \eqref{cd:GLK} is commutative, from the definitions. It remains to prove that 
        $\phi$ is simplicial and induces a weak homotopy equivalence.
        
        To see that $\phi$ is simplicial, one checks directly that 
	on each $n$-simplex, $\phi$ commutes with all degeneracy maps 
$s_j$, $j=0,\ldots,n$, as well as face maps $d_0$ and $d_n$. For $d_j$ with $0<j<n$ it suffices
to show
\[\prod_{i=1}^n(d_0g^{n-j}_i)g^{n-j-1}_i=\prod_{i=1}^nd_0g^{n-j}_i\prod_{i=1}^ng^{n-j-1}_i,\]
which follows from the definition of the polyhedral products, that a simplex 
$(g_{i}^*)_{i=1}^m\in \underline{\overline{W}(G)}^K$ implies $\{i\mid g_i^*\not=1\}\in K$,
hence $g_i^*g_{i'}^*=g_{i'}^*g_{i}^*$, for all $i,i'\in\{1,\ldots,m\}$.   
          
To show that $\phi$ induces a weak homotopy equivalence, 
first consider the case when $K=\Delta$ is a simplex 
         with $m$ vertices. Now $\underline{G}^K=\prod_{i=1}^mG_i$, the 
         direct product, and $\underline{\overline{W}(G)}^K=\prod_{i=1}^m\overline{W}(G_i)$.
          By definition we have an isomorphism
        $\phi\co\prod_{i=1}^m\overline{W}(G_i)\to \overline{W}(\prod_{i=1}^mG_i)$ of simplicial sets. 
		
	In general we use an induction on the number $m$ of vertices of $K$, which is trivial when $m=1$. 
	Now suppose $\phi$ is a weak homotopy equivalence for every simplicial complex whose number of vertices 
	is smaller than $n\geq 2$, and let $K$ be a flag complex with $n$ vertices.
	By Lemma \ref{lem:L11}, either $K$ is a simplex which is already proved as above, 
	or $K=K_1\coprod_{K_3} K_2$ is the union of two
	proper flag complexes $K_1, K_2$ along  their intersection $K_3$, which is clearly also flag. 
	Consider the diagram
	\[\xymatrix{
	                                \underline{\overline{W}(G)}^{K_1}\coprod_{\underline{\overline{W}(G)}^{K_3}}\underline{\overline{W}(G)}^{K_2}   \ar[rd]  \ar[d]_{\phi_1\coprod_{\phi_3}\phi_2} & \\
	                                  \overline{W}(\underline{G}^{K_1})
	    \coprod_{\overline{W}(\underline{G}^{K_3})}\overline{W}(\underline{G}^{K_2}) \ar[r]^{i_{\overline{W}}}   &
	    \overline{W}(\underline{G}^{K_1}*_{\underline{G}^{K_3}}\underline{G}^{K_2})
	}
	   	\]
		of simplicial inclusions, in which $\phi_i$ is a weak homotopy equivalence 
		by induction hypothesis, $i=1,2,3$, we see that $\phi_1\coprod_{\phi_3}\phi_2$ is a weak homotopy equivalence by the Gluing Lemma 
		(see for example, \cite[Lemma 2.4, p. 124]{WZZ99}). Together with Theorem \ref{thm:Whitehead}, we finish the induction, since 
		$ \underline{G}^{K_1}*_{\underline{G}^{K_3}}\underline{G}^{K_2}=\underline{G}^K$ by the uniqueness of
		colimits in $\mathcal{G}$ (see Lemma \ref{lem:decG} for details).  
	\end{proof}

\begin{cor}[Stafa \cite{Sta15}]\label{Cor:Sta15}
Suppose $K$ is a flag complex and $X_i$ is a connected $CW$ complex having a 
single homotopy group $G_i$ in dimension $1$, 
namely $X=K(G_i,1)$, $i=1,\ldots,m$. Then up to homotopy,
$\underline{X}^{K}=\overline{W}(\underline{G}^K)=K(\underline{G}^K,1)$. 
\end{cor}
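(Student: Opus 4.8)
The plan is to deduce the statement from Theorem~\ref{thm:weakequi} by comparing the topological polyhedral product with the geometric realization of the simplicial one. Since $X_i=K(G_i,1)$, each $G_i$ is a discrete group, which I would regard as a constant simplicial group; then $\overline{W}(G_i)$ is the nerve of $G_i$, its realization $|\overline{W}(G_i)|$ is the classifying space $BG_i=K(G_i,1)$, and $(|\overline{W}(G_i)|,*)$ is a pointed CW pair homotopy equivalent to $(X_i,*)$.

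First I would check that geometric realization carries the simplicial polyhedral product to the topological one, i.e.\ $|\underline{\overline{W}(G)}^{K}|\cong\underline{|\overline{W}(G)|}^{K}$. This is formal: the polyhedral product is the colimit over $\mathcal{K}$ of a diagram of finite products $Z(\sigma)=\otimes_i Y_i$, realization is a left adjoint and hence preserves this colimit, and (working in compactly generated spaces) it preserves finite products by a theorem of Milnor, so $|Z(\sigma)|\cong\prod_i|Y_i|$. Combined with the homotopy invariance of polyhedral products for well-pointed CW pairs, the equivalences $(X_i,*)\simeq(|\overline{W}(G_i)|,*)$ then give $\underline{X}^{K}\simeq|\underline{\overline{W}(G)}^{K}|$.

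Next I would invoke Theorem~\ref{thm:weakequi}: the simplicial map $\phi\co\underline{\overline{W}(G)}^{K}\to\overline{W}(\underline{G}^{K})$ is a weak homotopy equivalence, so $|\phi|$ is a homotopy equivalence of CW complexes by Whitehead's theorem. Finally, by Proposition~\ref{prop:preG} the graph product $\underline{G}^{K}$ of the discrete groups $G_i$ is again a discrete group, so $\overline{W}(\underline{G}^{K})$ is its nerve and $|\overline{W}(\underline{G}^{K})|=K(\underline{G}^{K},1)$. Chaining the three equivalences yields $\underline{X}^{K}\simeq K(\underline{G}^{K},1)$, which is the assertion.

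The only delicate point is the interchange of realization with the polyhedral product together with the homotopy invariance: both hold provided one works in compactly generated spaces, so that Milnor's product theorem applies, and each basepoint inclusion $*\to X_i$ is a cofibration, so that the defining colimit is a homotopy colimit. Granting this, the corollary is a purely formal consequence of Theorem~\ref{thm:weakequi}.
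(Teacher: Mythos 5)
Your argument is correct and follows essentially the same route as the paper: regard each $G_i$ as a constant simplicial group so that $\overline{W}(G_i)$ is the usual nerve, replace $X_i$ by $|\overline{W}(G_i)|$ up to homotopy, and apply Theorem \ref{thm:weakequi}. You merely make explicit the two points the paper leaves implicit (that realization commutes with the polyhedral product colimit and that the polyhedral product is homotopy invariant for well-pointed CW pairs), which is a reasonable amount of added care rather than a different method.
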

\begin{proof}
We consider $G_i$ as a simplicial group whose non-degenerate simplices concentrate in dimension $0$, being 
$G_i$ itself. It is easily checked that now the classifying space $\overline{W}-$ coincides with the usual one for groups.
 Up to homotopy, we replace $X_i$ by $\overline{W}(G_i)$. Then
$\underline{X}^{K}= \overline{W}(\underline{G}^K)$ up to homotopy, by Theorem \ref{thm:weakequi}. 
\end{proof}

\section{Decomposition of a graph product of simplicial groups}\label{sec:dec}
	Let
	$\underline{G}^K$ be the polyhedral product of groups $\underline{G}=(G_i)_{i=1}^m$ with
	respect to the flag complex $K$. 
	
	\begin{defin}\label{def:order}
	We say that a word of the form
	\begin{equation}
	    g=\prod_{k=1}^ng_{i_k,k}=g_{i_1,1}g_{i_2,2}\ldots g_{i_n,n},  \label{word:g}
	\end{equation}
	with $1\not=g_{i_k,*}\in G_{i_k}$ (the identity $1$ is the empty word), 
	is \emph{reduced} in $\underline{G}^K$,
if the two operations of I) 
exchanging $g_{i_k,k}g_{i_{k+1},k+1}$ into $g_{i_{k+1},k+1}g_{i_k,k}$, when 
$\{i_k,i_{k+1}\}\in K$ and II) merging $g_{i_{k},k}g_{i_{k+1},k+1}$ into a single element
$g''_{i_{k},k}$ (we delete it when we get the identity of $G_{i_k}$), 
when $i_k=i_{k+1}$, will not make $g$ a word of shorter length. 
The \emph{lexicographic partial order} on reduced words 
	is given by rules 
	1) $1<g$ if $g\not=1$, $g_{1}<g_{2}<\ldots<g_{m}$ for every $1\not=g_i\in G_i$, 
	and 2) two reduced words
	$g=\prod_{k=1}^ng_{i_k,k}<g'=\prod_{k=1}^{n'}g'_{i_{k}',k}$, 
	if there exists an index $j\leq n$, 
	such that $g_{i_k,k}=g'_{i_{k}',k}\in G_{i_k}$ 
	for $k\leq j$ while $i_{j+1}<i_{j+1}'$ (if $j=n$, this means
	 $n'>n$ and the first $n$ letters of $g'$ coincide with that of $g$).
	
	A reduced word \eqref{word:g} 
is \emph{locally minimal} if a single
operation I), whenever possible,  
will make it  larger in the partial order above. 
\end{defin}
Notice that a word of the form \eqref{word:g} is reduced implies that $i_k\not=i_{k+1}$, $k=1,\ldots, n-1$.
We need the following well-known structure theorem of a graph product of groups (a proof is given
in Section \ref{sec:last}).
    	\begin{thm}\label{thm:normal}
	Every element in the polyhedral product $\underline{G}^K$ has a locally minimal 
	presentation 
	$g=\prod_{k=1}^ng_{i_k,k}$ with $1\not= g_{i_k,k}\in G_{i_k}$.
	Moreover, the presentation is unique in the sense that 
	for two locally minimal presentations 
	$g=\prod_{k=1}^{n}g_{i_k,k}$ and $g'=\prod_{k=1}^{n'}g'_{i_k',k}$,  if $g=g'$, then 
	$n=n'$ and $g_{i_k,k}=g'_{i_k',k}\in G_{i_k}$, $k=1,\ldots,n$. 
	The presentation of the multiplication 
	\[gg'=\prod_{k=1}^{n}g_{i_k,k}\prod_{k=1}^{n'}g'_{i_k',k}
	\]
	becomes locally minimal after a finite sequence of operations that 
	I) exchange two adjacent letters, if possible, and II) merge them
	into a single one with the multiplication in $G_i$ 
	if they both come from $G_i$, $i=1,\ldots,m$.  
	\end{thm}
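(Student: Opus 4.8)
The plan is to establish the three assertions---existence, uniqueness, and the algorithmic behaviour of multiplication---in that order, since each relies on the previous. For \emph{existence}, I would start from an arbitrary word $g=\prod_{k=1}^n g_{i_k,k}$ with nontrivial letters and apply the two reduction operations of Definition \ref{def:order} to make it reduced: whenever $i_k=i_{k+1}$ I merge (operation II), and whenever an exchange (operation I) is possible I perform it so as to \emph{decrease} the word in the lexicographic partial order. Both operations either shorten the word or strictly decrease it in a fixed order on words of a given length, so the process terminates; at termination no single operation I can decrease the word, which is exactly the definition of \emph{locally minimal}. This gives a locally minimal presentation of every element.

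For \emph{uniqueness}, the cleanest approach is the standard normal-form argument for graph products via a faithful action rather than direct manipulation of words. I would let $\underline{G}^K$ act on the set $\mathcal{N}$ of locally minimal reduced words by left multiplication, showing that each generator $g_i\in G_i$ sends a locally minimal word to another one by prepending $g_i$ and then applying a bounded sequence of operations I and II to restore local minimality. One checks this defines a genuine group action (the relations of the graph product---the group laws in each $G_i$ and the commutators $[G_i,G_j]=1$ for $\{i,j\}\in K$---are respected), and that the orbit of the empty word $1$ realises each locally minimal word exactly once. Since distinct locally minimal words yield distinct images of $1$, two locally minimal presentations of the same element must coincide letter by letter, giving $n=n'$ and $g_{i_k,k}=g'_{i_k',k}$.

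The third assertion, on \emph{multiplication}, then follows almost formally: the concatenation $gg'=\prod_{k=1}^n g_{i_k,k}\prod_{k=1}^{n'}g'_{i_k',k}$ is a (possibly non-reduced) word, and by the existence argument the same terminating procedure of operations I and II converts it into a locally minimal presentation, which by uniqueness is \emph{the} presentation of the product. I expect the main obstacle to be the uniqueness step---specifically, verifying carefully that the prescribed rewriting-to-local-minimality rule for left multiplication by a single generator is well defined (independent of the choices made when several operations I are simultaneously available) and that it satisfies the defining relations of $\underline{G}^K$. This is where one must exploit that $K$ is flag, so that the commuting relations are generated by the edges $\{i,j\}\in K$ and no higher coherence conditions intervene; a convenient way to organise the bookkeeping is to track the \emph{support} $\{i : g_i\neq 1\}$ of each letter and use that within any reduced word the letters whose indices are pairwise joined by edges may be freely permuted. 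Since this is precisely the classical normal-form theorem for graph products, I would carry out the detailed verification in the appendix (Section \ref{sec:last}) as promised, and here record only that the termination-plus-action argument yields all three claims.
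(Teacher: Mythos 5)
Your proposal is correct in outline but follows a genuinely different route from the paper. The paper proves Theorem \ref{thm:normal} by induction on the number of vertices of $K$: using Lemma \ref{lem:L11} it writes $K=K_1\cup_{K_3}K_2$ as a union of full flag subcomplexes (arranged so that $K_2$ contributes a single extra vertex whose link is $K_3$), identifies $\underline{G}^K$ with the amalgam $\underline{G}^{K_1}*_{\underline{G}^{K_3}}\underline{G}^{K_2}$ via Lemma \ref{lem:decG}, and then imports both existence and uniqueness from Serre's normal form \eqref{def:reduced} for amalgamated products, checking that the resulting amalgam normal forms are precisely the locally minimal words. You instead propose the classical rewriting-plus-van der Waerden argument: termination of operations I and II gives existence, and a faithful action of $\underline{G}^K$ on the set of locally minimal words gives uniqueness. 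Both strategies are standard and sound; the paper's induction reuses the decomposition machinery already set up for Theorem \ref{thm:weakequi} and outsources the combinatorics to the amalgam structure theorem, whereas your route is more self-contained and makes the algorithmic content explicit (it is essentially the rewriting-system approach of Hermiller--Meier \cite{HM95}, which the paper cites as an alternative reference). One caveat: the step you defer --- well-definedness of the generator action on normal forms, i.e.\ that prepending $g_i$ and restoring local minimality yields a result independent of the order in which exchanges are performed --- is not routine bookkeeping but carries the real content of the uniqueness assertion (it amounts to confluence of the rewriting system, or to the explicit case analysis in the van der Waerden trick, and this is exactly where flagness of $K$ must be used). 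As written, your text is a viable proof outline rather than a complete proof; with that verification supplied it would constitute an independent, correct proof.
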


For $i=1,\ldots,m$, let 
\begin{equation}
\pi_i\co \underline{G}^K\to G_i \label{def:pii}
\end{equation}
be the projection sending a locally minimal
presentation $g=\prod_{k=1}^ng_{i_k,k}$ to $\pi_i(g)=\prod_{i_k=i}g_{i_k,k}$, the product of 
components in $G_i$. 
\begin{lem}\label{lem:monoid}
	For $i=1,\ldots,m$, 
	the projection $\pi_i\co \underline{G}^K\to G_i$ is a morphism of monoids.
\end{lem}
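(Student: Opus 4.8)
The plan is to reduce the multiplicativity of $\pi_i$ to an invariance property of the elementary reduction moves in Theorem \ref{thm:normal}. First I note that, by the uniqueness part of Theorem \ref{thm:normal}, $\pi_i$ is well defined on $\underline{G}^K$, and that $\pi_i(1)=1$ since the identity is the empty word. To prove $\pi_i(gg')=\pi_i(g)\pi_i(g')$ I would extend $\pi_i$ to a ``raw'' map $\widehat{\pi}_i$ defined on \emph{every} word $w=\prod_k h_{j_k,k}$, reduced or not, by $\widehat{\pi}_i(w)=\prod_{j_k=i}h_{j_k,k}$, the ordered product of those letters lying in $G_i$. By construction $\widehat{\pi}_i$ agrees with $\pi_i$ on locally minimal presentations, and it is tautologically multiplicative with respect to concatenation of words: if $w$ is the concatenation of $u$ and $u'$, then the $G_i$-letters of $w$ are those of $u$ followed by those of $u'$, so $\widehat{\pi}_i(w)=\widehat{\pi}_i(u)\,\widehat{\pi}_i(u')$.

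The heart of the argument is to show that $\widehat{\pi}_i$ is invariant under the two moves I) and II) used in Theorem \ref{thm:normal} to bring a product into locally minimal form. Move II) replaces two adjacent letters $g_{j,k}g_{j,k+1}$ from the same $G_j$ by their product in $G_j$ (deleting it if it is the identity); if $j\not=i$ this touches no $G_i$-letter, while if $j=i$ it replaces two consecutive $G_i$-letters by exactly their product as it appears in $\widehat{\pi}_i$, and a deletion of the identity is harmless. Move I) exchanges two adjacent letters from distinct groups, i.e. with $i_k\not=i_{k+1}$ and $\{i_k,i_{k+1}\}\in K$; since the two letters lie in different groups, at most one of them is a $G_i$-letter, so the relative order of the $G_i$-letters in the word is unchanged and $\widehat{\pi}_i$ is unchanged. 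The key point to keep in mind is that move I) is only ever applied to letters from \emph{distinct} commuting groups, so two letters of the same $G_i$ are never transposed past one another; this is what keeps $\widehat{\pi}_i$ well behaved even when $G_i$ is non-abelian, and it is precisely the place where the argument would break if one allowed a spurious ``commutation'' of $G_i$ with itself.

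With invariance in hand the conclusion follows quickly. Given $g=\prod_{k=1}^n g_{i_k,k}$ and $g'=\prod_{k=1}^{n'}g'_{i_k',k}$ in locally minimal form, their concatenation $w$ satisfies $\widehat{\pi}_i(w)=\widehat{\pi}_i(g)\,\widehat{\pi}_i(g')=\pi_i(g)\,\pi_i(g')$. By Theorem \ref{thm:normal}, $w$ is carried to the locally minimal presentation of $gg'$ by a finite sequence of moves I) and II), and by the previous paragraph each such move leaves $\widehat{\pi}_i$ unchanged; hence $\pi_i(gg')=\widehat{\pi}_i(w)=\pi_i(g)\,\pi_i(g')$. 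The value moreover does not depend on the chosen sequence of moves, since $\widehat{\pi}_i$ is invariant under each individual move and the terminal word is unique by Theorem \ref{thm:normal}. The main obstacle is thus entirely concentrated in the verification of invariance under move I), and specifically in making precise that this move never reorders two letters of a single, possibly non-abelian, $G_i$.
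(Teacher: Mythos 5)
Your proposal is correct and follows essentially the same route as the paper: concatenate the two locally minimal presentations, observe that the letter-collecting map is invariant under the reduction moves I) and II), and invoke Theorem \ref{thm:normal} to identify the reduced word with the locally minimal presentation of $gg'$. The paper compresses the invariance check into the single sentence ``$\pi_i$ preserves the two operations in Definition \ref{def:order}''; your elaboration of why move I) never transposes two letters of the same $G_i$ is exactly the content behind that sentence.
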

\begin{proof}
Let $g$ and $g'$ be two elements of $\underline{G}^K$ in their locally minimal presentations 
$g=\prod_{k=1}^ng_{i_k,k}$ and $g'=\prod_{k=1}^{n'}g'_{i'_k,k}$, respectively. Consider the word
\begin{equation}
	gg'=g_{i_1,k}\cdots g_{i_n,n}g'_{i'_1,1}\cdots g'_{i'_{n'},n'}. \label{prod:gg'}
\end{equation}
Notice that $\pi_i$ preserves the two operations in Definition \ref{def:order}, 
hence we can apply $\pi_i$ 
directly on the word \eqref{prod:gg'}, whence $\pi_i(gg')=\pi_i({g})\pi_i(g')$.
\end{proof}

Let $\mathrm{Ab}\co \underline{G}^K \to \prod_{i=1}^m G_i$ be the canonical epimorphism 
sending $g$ to $(\pi_i(g))_{i=1}^m$.  
It can be checked that this morphism coincides with the morphism of polyhedral products 
which is obtained from inclusion $K\to\Delta$ to
the simplex with the same vertices.
	We have an exact sequence 
	\[
	\begin{CD}
	1 @>>> H @>>> \underline{G}^K @>\mathrm{Ab}>> \prod_{i=1}^m G_i @>>>1
	\end{CD}
	\]
	where $H=\ker\mathrm{Ab}$. 
	\begin{thm}\label{thm:JW}
	Let $\underline{G}^K$ be the polyhedral product with respect to 
	simplicial groups $\underline{G}=(G_i,*)_{i=1}^m$ and a flag complex $K$. 
	There is an isomorphism 
	\[ \begin{CD}
	      \underline{G}^K @>\phi>\cong > H \times \prod_{i=1}^m G_i
	  \end{CD}
	\]
	of simplicial sets preserving the left multiplication of $H$, 
	which is natural in the sense that if we have a simplicial inclusion 
	$K\to \wt{K}$ of flag complexes with the same vertices, together with 
	morphisms $f_i\co G_i\to 
	\wt{G}_i$ 
	of simplicial groups, 
	$i=1,\ldots,m$, then the diagram
	\begin{equation} \begin{CD}
	      \underline{G}^K @>\psi>\cong > H \times \prod_{i=1}^m G_i\\
	      @V\underline{f}VV                               @V(\underline{f}', \prod_i f_i)VV  \\
	      \underline{\wt{G}}^{\wt{K}}@>\wt{\psi}>\cong > \wt{H} \times \prod_{i=1}^m \wt{G}_i\\
	  \end{CD}\label{diag:phi}
	\end{equation}
	commutes, where $\underline{f}'\co H\to \wt{H}$ is the morphism induced by $\underline{f}$.
	\end{thm}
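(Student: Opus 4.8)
The plan is to realize the extension $1\to H\to\underline{G}^K\to\prod_{i=1}^m G_i\to 1$ as a (degreewise) split extension of pointed sets by producing a \emph{simplicial, natural set-theoretic section} $s\co\prod_{i=1}^m G_i\to\underline{G}^K$ of $\mathrm{Ab}$. First I would record that $\underline{G}^K$ is itself a simplicial group, its $n$-simplices forming the graph product of the groups $(G_i)_n$ since colimits of groups are computed degreewise; that $H$ is a normal simplicial subgroup; and that $\mathrm{Ab}$ is a morphism of simplicial groups. Once $s$ is available, I would set
\[
\psi\co\underline{G}^K\to H\times\prod_{i=1}^m G_i,\qquad
\psi(g)=\bigl(g\cdot s(\mathrm{Ab}(g))^{-1},\ \mathrm{Ab}(g)\bigr),
\]
whose first coordinate lies in $H=\ker\mathrm{Ab}$ because $\mathrm{Ab}\circ s=\mathrm{id}$; this is a bijection in each degree with two-sided inverse $(h,a)\mapsto h\cdot s(a)$.

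For the section I would take the ordered product with respect to the vertex ordering $1<\cdots<m$: given $(a_i)_{i=1}^m$, set $s\bigl((a_i)_i\bigr)=a_1a_2\cdots a_m\in\underline{G}^K$. That $\mathrm{Ab}\circ s=\mathrm{id}$ is immediate from Lemma \ref{lem:monoid}: each $\pi_i$ is a morphism of monoids, and since $\pi_i(a_j)=a_j$ for $j=i$ and $\pi_i(a_j)=1$ otherwise, one gets $\pi_i(a_1\cdots a_m)=a_i$. The section $s$ is simplicial because the face and degeneracy operators of the simplicial group $\underline{G}^K$ are group homomorphisms, hence commute with the fixed ordered product: $d_j(a_1\cdots a_m)=d_j(a_1)\cdots d_j(a_m)=s\bigl((d_j a_i)_i\bigr)$, and likewise for each $s_j$. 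Consequently $\psi$, being assembled from $\mathrm{Ab}$, $s$, the group operations of $\underline{G}^K$ and the diagonal, is a map of simplicial sets. Left $H$-equivariance is then automatic: for $h\in H$ we have $\mathrm{Ab}(hg)=\mathrm{Ab}(g)$, so $\psi(hg)=\bigl(h\cdot g\,s(\mathrm{Ab}(g))^{-1},\mathrm{Ab}(g)\bigr)$, i.e. $\psi$ intertwines left translation by $H$ on $\underline{G}^K$ with left translation on the first factor of $H\times\prod_{i=1}^m G_i$.

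It remains to establish naturality in $(K,\underline{G})$, which is where the bookkeeping concentrates. The two ingredients are the naturality of $\mathrm{Ab}$ and of $s$. For $\mathrm{Ab}$ I would check $\wt\pi_i\circ\underline{f}=f_i\circ\pi_i$ directly on a locally minimal presentation (Theorem \ref{thm:normal}), using that $\underline f$ acts componentwise and each $f_i$ is a homomorphism, which yields $\mathrm{Ab}\circ\underline f=(\prod_i f_i)\circ\mathrm{Ab}$. For $s$, since the vertex set is unchanged under $K\to\wt K$ and $\underline f$ sends $a_1\cdots a_m$ to $f_1(a_1)\cdots f_m(a_m)$, the same ordered product computed in $\underline{\wt G}^{\wt K}$ gives $\underline f\circ s=\wt s\circ(\prod_i f_i)$. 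Substituting these two identities into the definitions of $\psi$ and $\wt\psi$ then shows that the square \eqref{diag:phi} commutes, the left-hand coordinate matching because $\underline f'\co H\to\wt H$ is the restriction of $\underline f$.

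I expect the only real friction to be this last verification---keeping track of which copy of the ordered-product section is being used, and confirming that the induced map $\underline f'$ genuinely is the restriction of $\underline f$ to $H$---rather than any conceptual difficulty. The existence of a simplicial section is essentially forced by the ordered normal form of Theorem \ref{thm:normal} and the fact that the simplicial structure maps are homomorphisms; all that the statement really asserts beyond this is that the resulting splitting can be chosen naturally, which the ordered product manifestly is.
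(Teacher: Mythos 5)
Your proof is correct and follows essentially the same route as the paper: the paper (following Jie Wu) sets $\pi_H(g)=g\gamma_1^{-1}\cdots\gamma_m^{-1}$ with $\gamma_i=\pi_i(g)$, which is exactly your $g\cdot s(\mathrm{Ab}(g))^{-1}$ for the (descending) ordered-product section, and verifies bijectivity, simpliciality, $H$-equivariance and naturality in the same way. Your packaging via an explicit simplicial section is a slight cosmetic cleanup (it makes surjectivity immediate, where the paper invokes Theorem \ref{thm:PV}), but the underlying argument is identical.
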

	\begin{proof}
	(Following Jie Wu \cite{Wu10}.)
	Given $g\in \underline{G}^K $, let $\gamma_i=\pi_i(g)\in G_i$ as in \eqref{def:pii}, and let
	\[\pi_H(g)=h=g\prod_{i=1}^m\gamma_i^{-1} = g\gamma_1^{-1}\cdots\gamma_m^{-1}.\]
	Clearly $h\in\ker(\mathrm{Ab})=H$, whence $\pi_H\co\underline{G}^K\to H$ is well-defined..
	Let $\psi=(\pi_H,\prod_{i=1}^n\pi_i)$ and it suffices to show that it is 
	a simplicial bijection. It is straightforward to 
	check that $\psi$ commutes with all face and degeneracy maps, which is also injective since
	$g=\pi_H(g)\prod_{i=0}^{m-1}\pi_{m-i}(g)$. To prove the surjectiveness, let $h\in H$ be given, as well as 
	$\gamma_i\in G_i$, $i=1,\ldots,m$. After writing $h$ as a product of commutators 
	(see Theorem \ref{thm:PV} below) we see that 
	$\pi_H(h)=h$, since $\pi_i(h)=1$, $i=1,\ldots,m$. By Lemma \ref{lem:monoid} we have 
	\[ \psi(g)=(h,(\gamma_i)_{i=1}^m),\]
	where $g=h\gamma_m\cdots\gamma_1$. Clearly 
	$\pi_H$ preserves the left multiplication of $H$. 
	
       Finally, Diagram \eqref{diag:phi} is commutative since all constructions above 
       are natural.
	\end{proof}
Recall that for a path-connected and pointed space $X$, the Kan construction $GSing(X)$ of the simplicial set $Sing(X)$ (with a single simplex in dimension $0$, up to homotopy) gives a model for the Moore loop space $\Omega X$. 
Moreover, there are canonical maps $f\co |Sing(X)|\to X$ of topological spaces and 
$\alpha\co |GSing(X)|\to \Omega X$
of topological monoids (sending a simplicial circuit into a continuous Moore loop) respectively, each inducing a weak homotopy equivalence (see \cite{Kan58}). Moreover,
there is a canonical morphism $\beta\co Sing(X)\to \overline{W}(GSing(X))$ of simplicial sets inducing a homotopy equivalence.

Now let $\iota\co \underline{X}^{K}\to \prod_{i=1}^mX_i$ be the inclusion of polyhedral 
	products with respect to a flag complex $K$ and $\underline{X}=(X_i,*)_{i=1}^m$ with each $X_i$ a 
	path-connected and pointed $CW$ complex, and let $F$ be the homotopy fiber of $\iota$. 
	Let $G_i=GSing(X_i)$ be the Kan construction, $i=1,\ldots,m$; the homotopy equivalences 
	$f_i\co |Sing(X_i)|\to X_i$ and $\beta_i\co Sing(X)\to \overline{W}(GSing(X))$ give rise to
	morphisms 
	        \begin{equation}
        \begin{CD}
        |\underline{\overline{W}(G)}^K| @<\underline{\beta}^K<<\underline{|Sing(X)|}^K@>\underline{f}^K>>\underline{X}^K
        \end{CD}\label{diag:HC}
        \end{equation}
        of polyhedral products in $Top_*$, which are homotopy equivalences. This follows 
        from a general fact that the identification $*\to Z_i$ is a cofibration 
        with $Z_i=X_i,|Sing(X_i)|$ or $|\overline{W}(G_i)|$, hence the colimit is homotopy equivalent to the homotopy
        colimit, and then by the comparison theorem (see \cite{BBCG10,WZZ99}). 
        Now we replace $\underline{X}^K$ by  $|\underline{\overline{W}(G)}^K|$, and 
        consider the commutative diagram
	\[
	\begin{CD}
	 G\underline{\overline{W}(G)}^K@>>> G\underline{\overline{W}(G)}^K\times_{\tau} \underline{\overline{W}(G)}^K@>>>\underline{\overline{W}(G)}^K\\
	    @VG\phi VV                                        @V(G\phi, \phi)VV                        @V\phi VV  \\
	G\overline{W}(\underline{G}^K) @>>>G\overline{W}(\underline{G}^K)\times_{\tau}\overline{W}(\underline{G}^K)
	 @>>>\overline{W}(\underline{G}^K)\\
	@VAd_\mathrm{id}VV           @V (Ad_\mathrm{id},\mathrm{id})VV                       @V\mathrm{id}VV\\
	      \underline{G}^K @>>> \underline{G}^K\times_{\tau}\overline{W}(\underline{G}^K)@>>>\overline{W}(\underline{G}^K)
	\end{CD}
	\]
	in which the rows are simplicial principal bundles, $\phi$ is the inclusion in Theorem \ref{thm:weakequi} and $Ad_{\mathrm{id}}\co G\overline{W}(\underline{G}^K)\to (\underline{G}^K)$ is the adjunction of the identity of $\overline{W}(\underline{G}^K)$.
	We have three contractible total spaces in the middle column, 
	each with a twisting function $\tau$ (see \cite{Kan58b}). A comparison of homotopy groups in the long
	exact sequences shows that both morphisms $G\phi$ and $Ad_{\mathrm{id}}$ of simplicial groups induce 
	homotopy equivalences, so does their composition 
	\begin{equation}
	\eta\co G\underline{\overline{W}(G)}^K\to \underline{G}^K.\label{def:eta}
	\end{equation}
        Notice that the naturality of the functor $G$ and $\overline{W}$ gives a commutative diagram
        \begin{equation}\label{diag:HF}
	\begin{CD}
	\Omega F @>>> \Omega |\underline{\overline{W}(G)}^K|@>\Omega \iota >>  \prod_{i=1}^m
	\Omega|\overline{W}(G_i)|\\
	@A A\alpha' A                     @A A \alpha A       @A A\alpha''=\prod_{i=1}^m\alpha_i A\\
	|G_F| @>>>      |G\underline{\overline{W}(G)}^K | @>|G\iota|>> |\prod_{i=1}^mG\overline{W}(G_i)| \\
	 @VV\eta' V                           @VV\eta V                          @VV\eta''=\prod_{i=1}^mAd_iV\\
	        |H|     @>>>        |\underline{G}^K|@>\mathrm{Ab}>> |\prod_{i=1}^mG_i|
	\end{CD}
	\end{equation}
	of topological monoids, where $G_F=\mathrm{ker}(G\iota)$ and $H=\mathrm{ker}(\mathrm{Ab})$, respectively,
	such that all vertical maps are morphisms of topological monoids each inducing a homotopy equivalence (in which $\alpha\co |G-|\to\Omega|-|$ sends simplicial circuits to Moore loops).
       Together with the splitting $\underline{G}^K=H\times\prod_{i=1}^mG_i$ which preserves the left multiplication of $H$, 
       we have the conclusion below, which is a special case of \cite[Lemma 3.16, p. 15]{The18}.
       	\begin{thm}\label{thm:dec_loops}
		Let $\iota\co \underline{X}^{K}\to \prod_{i=1}^mX_i$ 
		(resp. $\wt{\iota}\co \underline{\wt{X}}^{\wt{K}}\to \prod_{i=1}^m\wt{X}_i$) 
		be the inclusion of polyhedral 
		products with respect to a flag complex $K$ (resp. $\wt{K}$) 
		and $\underline{X}=(X_i,*)_{i=1}^m$ (resp. $\underline{X}=(\wt{X}_i,*)_{i=1}^m$)
		with each $X_i$ (resp. $\wt{X}_i$) a 
		path-connected and pointed $CW$ complex. Let $F$ (resp. $\wt{F}$) 
		be the homotopy fiber of $\iota$ (resp. $\wt{\iota}$).
		Then up to homotopy, the corresponding fibration of Moore loop spaces admits a splitting
	\[
	           \Omega \underline{X}^{K}\simeq \Omega F\times \Omega \prod_{i=1}^mX_i,
	\]
	preserving the left actions of the monoid $\Omega F$. Moreover, the diagram
	\[\begin{CD}\Omega \underline{X}^{K} @>>>\Omega F\times \Omega \prod_{i=1}^mX_i\\
			@VVV  @VVV\\
			\Omega \underline{\wt{X}}^{\wt{K}} @>>>\Omega \wt{F}\times \Omega 
			\prod_{i=1}^m\wt{X}_i
		\end{CD}	\]
		commutes, provided that $\underline{X}^{K}\to\underline{\wt{X}}^{\wt{K}}$ is 
		induced by a simplicial inclusion $K\to\wt{K}$ and $CW$ maps $X_i\to\wt{X}_i$,
		$i=1,\ldots,m$.
	\end{thm}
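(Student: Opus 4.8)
The plan is to assemble the splitting directly from the zigzag of homotopy equivalences of topological monoids already constructed in Diagram \eqref{diag:HF}, transporting across it the simplicial splitting of Theorem \ref{thm:JW}. The entire argument takes place in the category of topological monoids, where I can keep track of the left action of the relevant loop monoid at every stage.

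First I would reduce to the simplicial model. By the homotopy equivalences of polyhedral products in \eqref{diag:HC}, with $G_i=GSing(X_i)$, the inclusion $\iota$ is homotopy equivalent to $|\underline{\overline{W}(G)}^K|\to\prod_i|\overline{W}(G_i)|$; hence $\Omega\underline{X}^K\simeq\Omega|\underline{\overline{W}(G)}^K|$ and the homotopy fiber $F$ is identified, up to homotopy, with the homotopy fiber of this realized inclusion. Next I pass to the Kan loop group: the equivalence $\alpha$ (sending simplicial circuits to Moore loops) together with $\eta$ from \eqref{def:eta} yields the middle column of \eqref{diag:HF}, namely the zigzag of homotopy equivalences of topological monoids
\[
\Omega|\underline{\overline{W}(G)}^K|\xleftarrow{\;\alpha\;}|G\underline{\overline{W}(G)}^K|\xrightarrow{\;\eta\;}|\underline{G}^K|.
\]
I then invoke Theorem \ref{thm:JW}: the simplicial bijection $\underline{G}^K\cong H\times\prod_iG_i$ preserving left multiplication by $H$ realizes to a homotopy equivalence $|\underline{G}^K|\simeq|H|\times\prod_i|G_i|$ that is $|H|$-equivariant for the left action. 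Finally, reading off the left and right columns of \eqref{diag:HF}, I identify $|H|\simeq|G_F|\simeq\Omega F$ and $\prod_i|G_i|\simeq\prod_i\Omega X_i=\Omega\prod_iX_i$; composing all the equivalences gives the asserted splitting, with the left $|H|$-action transported to the left $\Omega F$-action. For naturality, I would note that each ingredient — the functors $G$ and $\overline{W}$, the maps $\alpha$, $\beta_i$, $f_i$, $\eta$, and the splitting $\psi$ of Theorem \ref{thm:JW} — is natural with respect to simplicial inclusions $K\to\wt{K}$ and $CW$ maps $X_i\to\wt{X}_i$, so that assembling the corresponding natural squares produces the commuting square in the statement.

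The main obstacle I expect is keeping the left action coherent along the whole zigzag. The splitting of Theorem \ref{thm:JW} is \emph{not} a decomposition into a product of monoids — it preserves only left multiplication by $H$ — so I must check that each intermediate equivalence ($\alpha$, $\eta$, and the realization of $\psi$) is a morphism of topological monoids which is moreover equivariant for the appropriate left action, so that the composite genuinely respects the left $\Omega F$-action claimed. A related delicate point is the precise identification $\Omega F\simeq|H|$: this requires that the kernel $H=\ker(\mathrm{Ab})$ realize to a model for the loop space of the homotopy fiber, which is exactly what the commutativity and the vertical equivalences of the left column of \eqref{diag:HF} supply, once one verifies that the top and bottom rows of \eqref{diag:HF} are fibration sequences up to homotopy (the former being $\Omega$ of the fibration defining $F$, the latter the realization of the short exact sequence of simplicial groups).
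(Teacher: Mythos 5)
Your proposal follows essentially the same route as the paper: the paper deduces the theorem from exactly the chain you describe --- the replacement of $\underline{X}^K$ by $|\underline{\overline{W}(G)}^K|$ via \eqref{diag:HC}, the zigzag of topological-monoid equivalences in \eqref{diag:HF} built from $\alpha$ and $\eta$, and the left-$H$-equivariant splitting of Theorem \ref{thm:JW}, with naturality of each ingredient giving the commuting square. The delicate points you flag (that the splitting only preserves left multiplication by $H$, and the identification $\Omega F\simeq |H|$ via the fibration comparison) are precisely the ones the paper handles in the discussion preceding the statement, so your argument is correct and matches the paper's.
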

	Based on this decomposition, we give a topological interpretation of the theorem below.
Recall that the $1$-skeleton $K^1$ of a flag complex $K$ is \emph{chordal}, if every simplicial circuit in $K$ 
bounds a union of triangles in $K$.  
\begin{thm}\cite[Theorem 5.2]{PV19}\label{thm:PV}
Let $\underline{G}^K$ be the polyhedral product with respect to 
	simplicial groups $\underline{G}=(G_i,*)_{i=1}^m$ and a flag complex $K$. Consider all full subcomplexes
	$K_I$ with more than one connected component, where $I\subset\{1,\ldots,m\}$. We write all elements of 
	$I=\{i_l\}_{l=1}^{n_I}$ in an increasing order, with $n_I$ the cardinality of $I$. Let 
	$L_g(h)=(g,h)=g^{-1}h^{-1}gh$ be the commutator.  
	Then $H=\mathrm{ker}(\mathrm{Ab})$ is 
	generated by the set
	\[S=\bigcup_{\mathrm{rank}\wt{H}_0(K_I)>0} S_I,\]
	in which each $S_I$ is the collection of all iterated commutators of the form
	\begin{equation}
	   L_{g_{i_1}}\circ L_{g_{i_2}}\circ\ldots\circ L_{g_{i_{t-1}}}\circ L_{g_{i_{t+1}}}\circ\ldots
	   \circ L_{g_{i_{n}}}(g_{i_{t}}),\label{def:SI}
	\end{equation}
	such that $\{i_t\}$ is the smallest vertex in a connected component of $K_I$ not containing 
	the vertex $\{i_n\}$, with $g_{i_l}$ running through all elements of $G_{i_l}$ which is not the identity. Moreover, $S$ generates $H$ freely if and only if $K^1$ is chordal.
	\end{thm}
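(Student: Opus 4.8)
The plan is to reduce the statement to the case of \emph{discrete} groups and then to the combinatorics of the normal form of Theorem \ref{thm:normal}. A simplicial group is a simplicial object in $\mathcal{G}$, and each of $\underline{G}^K$, the abelianization $\mathrm{Ab}$, and $H=\ker(\mathrm{Ab})$ is formed degreewise: in simplicial degree $n$ the group $(\underline{G}^K)_n$ is the graph product of the discrete groups $(G_i)_n$ along $K^1$, and $H_n=\ker\big((\underline{G}^K)_n\to\prod_i(G_i)_n\big)$. The elements $g_{i_l}$ appearing in \eqref{def:SI} range over nonidentity simplices of $G_{i_l}$, so $S$ meets degree $n$ in precisely the commutators built from elements of $(G_i)_n$. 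Hence it suffices to prove the generation and freeness statements for the discrete groups $(G_i)_n$ in each fixed degree and then to check compatibility with the face and degeneracy maps; since these are group homomorphisms commuting with the graph-product structure and with each operation $L_g$, they carry elements of $S_I$ to products of elements of $S_I$, so the degreewise conclusions assemble into the asserted statement for the simplicial group $H$, with ``generates freely'' read as ``$H$ is a free simplicial group admitting $S$ as a degreewise basis.''

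For generation in the discrete case I would run a collecting process on the normal form. Given $h\in H$ in a locally minimal presentation $h=\prod_{k=1}^n g_{i_k,k}$, note that two labels $i,j$ commute in $\underline{G}^K$ exactly when $\{i,j\}\in K$, i.e.\ when $K_{\{i,j\}}$ is connected; thus the only obstruction to sorting the letters of $h$ by their vertex label is the appearance of commutators $(g_i,g_j)$ with $\{i,j\}\notin K$, and such a commutator is a nontrivial element of $H$ precisely when $\mathrm{rank}\,\wt{H}_0(K_{\{i,j\}})>0$. Sorting $h$ into the shape $\prod_i\pi_i(h)$ and invoking $\pi_i(h)=1$ (Lemma \ref{lem:monoid}) expresses $h$ as a product of such commutators together with the higher iterated commutators produced when the correction terms themselves are moved past further letters. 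Organizing the process by the Hall--Witt identities yields canonical iterated commutators of the shape \eqref{def:SI}, in which the choice of the interior vertex $\{i_t\}$ as the least vertex of a component of $K_I$ not containing $\{i_n\}$ records exactly which vertices of $I$ fail to commute; the iterated commutator on a set $I$ is a genuinely new (indecomposable) element precisely when $K_I$ is disconnected, which is the condition $\mathrm{rank}\,\wt{H}_0(K_I)>0$.

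For the freeness statement I would argue both implications by induction using the splitting of Lemma \ref{lem:L11}. If $K^1$ is chordal it admits a perfect elimination ordering, so the splitting $K=K'\cup K''$ can be arranged with the intersection $K'\cap K''$ a \emph{simplex} (a clique); then $\underline{G}^K=\underline{G}^{K'}\ast_{\underline{G}^{K'\cap K''}}\underline{G}^{K''}$ is an amalgam over a direct-product factor, and analyzing the induced extension of kernels shows that the commutators contributed at this step are free generators, the amalgamating subgroup being abelianized away. Inductively $S$ becomes a free basis of $H$. Conversely, if $K^1$ is not chordal it contains an induced cycle of length $\ell\geq4$; taking $I$ to be its vertex set, the full subcomplex $K_I$ is that cycle, so $\wt{H}_1(K_I)\neq 0$, and the $1$-cycle produces a nontrivial Jacobi/Hall--Witt syzygy among the commutators in $S_I$, so $S$ cannot be free.

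The hard part will be exactly this last equivalence, in two respects. First, making the collecting process canonical so that it outputs \emph{precisely} the commutators \eqref{def:SI} and no redundant ones requires controlling how correction terms interact across several labels, i.e.\ proving completeness and irredundancy of the Hall-type basis indexed by the components of $K_I$. Second, exhibiting the explicit relation in the non-chordal case and checking that it is not a consequence of the defining relations amounts to identifying the degree-$2$ part of a minimal resolution with a sum of the $\wt{H}_1(K_I)$, which is the $s=2$ instance of the $\mathrm{Tor}$ isomorphism announced in the introduction; conceptually $S$ is free if and only if $\wt{H}_j(K_I)=0$ for all $j\geq1$ and all $I$, the homological characterization of chordality (each $K_I$ being then homotopy equivalent to a discrete set). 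A cross-check, in the spirit of the paper, is the topological reading in which $H\simeq\Omega F$ via Theorem \ref{thm:dec_loops} and the iterated commutators \eqref{def:SI} are iterated Samelson products, so the Hilton--Milnor decomposition of $\Omega F$ has its summands indexed by the $\wt{H}_0(K_I)$ and $F$ is a wedge of spheres exactly when $K^1$ is chordal. The degreewise-to-simplicial passage and the naturality under $K\to\wt{K}$ and $f_i\colon G_i\to\wt{G}_i$ are then routine, following from the naturality of all constructions involved.
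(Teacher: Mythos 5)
A preliminary remark: the paper does not prove this statement at all --- it is imported verbatim from \cite[Theorem 5.2]{PV19} and used as a black box in the proofs of Theorem \ref{thm:JW} and Corollary \ref{cor:PV} --- so there is no in-paper argument to compare yours against. Your degreewise reduction to discrete groups is sound (a face map may kill some $g_{i_l}$, in which case the whole iterated commutator collapses to the identity, so $S$ is still carried into the submonoid it generates), and the overall strategy --- normal forms from Theorem \ref{thm:normal} plus a collecting process for generation, clique decompositions for freeness --- is the standard one and consistent with the cited source.

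As a proof, however, the proposal has genuine gaps beyond the ones you flag. For generation, the entire content of the theorem is the claim that the collecting process terminates in a product of commutators of \emph{exactly} the shape \eqref{def:SI}, with the stated normalization of $i_t$; ``organizing the process by the Hall--Witt identities'' is a name for the difficulty, not an argument, and nothing in the proposal explains why only disconnected $K_I$ contribute. For the chordal direction of freeness, ``the commutators contributed at this step are free generators, the amalgamating subgroup being abelianized away'' should be replaced by an actual mechanism: since $K'\cap K''$ is a clique, $\underline{G}^{K'\cap K''}$ injects into $\prod_i G_i$, so $H$ meets every conjugate of the edge group of the amalgam trivially and Bass--Serre theory exhibits $H$ as a free product of conjugates of the two smaller kernels with a free group; even granting this, freeness of the group $H$ does not yet make $S$ a basis --- that requires a separate comparison, e.g.\ with $H_1(H)$. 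For the converse, a Hall--Witt identity holds in \emph{every} group, in particular in the free group on $S$, so it cannot witness a failure of freeness; the actual obstruction is that for a chordless cycle $K_I$ the two ``diagonal'' commutators commute in $H$ (equivalently, $H_2$ of the aspherical fiber, i.e.\ the $s=2$ part of Theorem \ref{thm:basis}, contains $\wt{H}_1(K_I)\otimes\bigotimes_{i\in I}J(A_i)\not=0$, whereas a free group has vanishing $H_2$). You name this homological criterion as the ``conceptual'' reason but never connect it to the claimed syzygy, so the non-chordal direction is not established.
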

	
\begin{cor}\label{cor:PV}
Let $F$ be the homotopy fiber of the inclusion $\iota\co \underline{X}^{K}\to \prod_{i=1}^mX_i$ of polyhedral 
	products with respect to a flag complex $K$ and $\underline{X}=(X_i,*)_{i=1}^m$ with each $X_i$ a 
	path-connected and pointed $CW$ complex.  
There exists a morphism of topological monoids
\begin{equation}
	\Omega\Sigma\left(\bigvee_{I\subset\{1,\ldots,m\},\  \wt{b}_I>0}
\wt{b}_I\widehat{\underline{\Omega X}}^{I}\right)\to \Omega F\label{mor:TM}
\end{equation}
where $\wt{b}_I=\mathrm{rank}\wt{H}_0(K_I)$ is the reduced Betti number in dimension $0$
 and $\wt{b}_I\widehat{\underline{\Omega X}}^I$ is the wedge sum of $\wt{b}_I$ copies of $\widehat{\underline{\Omega X}}$ with $\widehat{\underline{\Omega X}}^I=\bigwedge_{i\in I}\Omega X_i$ a smash product of Moore loop spaces.
 When $K^1$ is a chordal graph, the morphism above induces a homotopy equivalence. 
 Moreover, the morphism above induces a continuous map  
 \begin{equation}
	 \Sigma\left(\bigvee_{I\subset\{1,\ldots,m\}, \ \wt{b}_I>0}
	 \wt{b}_I\widehat{\underline{\Omega X}}^{I}\right)\to F \label{eq:F}
 \end{equation}
 which is a homotopy equivalence when $K^1$ is a chordal graph. 
\end{cor}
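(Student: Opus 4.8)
The plan is to identify $\Omega F$ with the realization $|H|$ of the simplicial group $H=\ker(\mathrm{Ab})$ and then to recognize $H$, via Theorem \ref{thm:PV}, as a quotient of a free simplicial group whose generators are parametrized by the wedge $Y:=\bigvee_{I,\ \wt{b}_I>0}\wt{b}_I\widehat{\underline{\Omega X}}^{I}$. First I would read off from the left column of Diagram \eqref{diag:HF} that the maps $\alpha'$ and $\eta'$ furnish homotopy equivalences $\Omega F\simeq|G_F|\simeq|H|$ of topological monoids, so that it suffices to produce a monoid map $\Omega\Sigma Y\to|H|$ with the stated properties. Throughout I use that $G_i=GSing(X_i)$ with $|G_i|\simeq\Omega X_i$ and that the basepoint (the identity of $G_i$) corresponds to the constant loop.

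Next I would assemble the space of generators. For each $I=\{i_1<\cdots<i_{n_I}\}$ with $\wt{b}_I=\mathrm{rank}\,\wt{H}_0(K_I)>0$, the set $S_I$ of Theorem \ref{thm:PV} consists of the iterated commutators \eqref{def:SI}, indexed by the $\wt{b}_I$ connected components of $K_I$ not containing the largest vertex $i_{n_I}$ (one admissible $t$ per such component), together with a choice of a non-identity element $g_{i_l}\in G_{i_l}$ for each $l$. Since an iterated commutator of the form \eqref{def:SI} becomes trivial as soon as one of its entries is the identity, letting the $g_{i_l}$ vary and passing to realizations exhibits each component-type as a based map out of the smash product $\widehat{\underline{\Omega X}}^{I}=\bigwedge_{i\in I}\Omega X_i$; collecting the $\wt{b}_I$ types over all admissible $I$ produces a based map whose source is exactly $Y$. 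Simplicially this is a map $\mathcal{Y}\to H$ from a reduced simplicial set $\mathcal{Y}$ with $|\mathcal{Y}|\simeq Y$, and by the universal property it extends to a homomorphism $\mathcal{F}\mathcal{Y}\to H$ of simplicial groups, where $\mathcal{F}$ denotes Milnor's free simplicial group functor.

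I would then invoke Milnor's construction \cite{Mil72}, which gives a natural homotopy equivalence $|\mathcal{F}\mathcal{Y}|\simeq\Omega\Sigma|\mathcal{Y}|\simeq\Omega\Sigma Y$ of topological monoids (this is the generalization of Hilton's wedge-of-spheres decomposition). Composing it with the realization $|\mathcal{F}\mathcal{Y}|\to|H|\simeq\Omega F$ of the homomorphism above yields the morphism of topological monoids \eqref{mor:TM}, which exists in general because Theorem \ref{thm:PV} guarantees that $S$ generates $H$ at every simplicial level. When $K^1$ is chordal, the same theorem asserts that $S$ generates $H$ \emph{freely}, so $\mathcal{F}\mathcal{Y}\to H$ is an isomorphism of simplicial groups and \eqref{mor:TM} is a homotopy equivalence. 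For \eqref{eq:F} I would deloop: as $F$ is connected we have $F\simeq\overline{W}(\Omega F)$ and $\Sigma Y\simeq\overline{W}(\Omega\Sigma Y)$, the map \eqref{eq:F} being $\overline{W}$ applied to \eqref{mor:TM}; since $\overline{W}$ carries homotopy equivalences of grouplike monoids to homotopy equivalences, \eqref{eq:F} is an equivalence precisely in the chordal case.

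The step I expect to be the main obstacle is the second one: verifying that the commutator assignment is genuinely based, continuous, and simplicial, and that the collapsing of entries with an identity coordinate assembles the $S_I$ precisely into the smash factor $\widehat{\underline{\Omega X}}^{I}$, with the correct number $\wt{b}_I$ of copies, so that $|\mathcal{Y}|\simeq Y$ rather than merely admitting some map of indeterminate homotopy type. Once this identification is pinned down, the free-versus-non-free dichotomy of Theorem \ref{thm:PV} cleanly governs whether \eqref{mor:TM} and \eqref{eq:F} are homotopy equivalences or only morphisms.
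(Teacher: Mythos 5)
Your proposal is correct and follows essentially the same route as the paper: both identify $\Omega F$ with $|H|$ via Diagram \eqref{diag:HF} and Theorem \ref{thm:dec_loops}, both observe that the iterated commutators \eqref{def:SI} vanish when an entry is the identity so that the generator assignment factors through the smash products $\widehat{\underline{G}}^I$ and extends to Milnor's free construction $F[\bigvee\wt{b}_I\widehat{\underline{G}}^I]$ with $|F[X]|\simeq\Omega\Sigma|X|$, and both use the generation/free-generation dichotomy of Theorem \ref{thm:PV} together with an application of $\overline{W}$ to obtain \eqref{eq:F}. The only cosmetic difference is that you assemble one free simplicial group on the full wedge at once, whereas the paper first builds the isomorphism $\wt{f}_I$ for each $I$ separately.
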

\begin{proof}
Again up to homotopy, we replace $\Omega X_i$ by the monoid $|G_i|$, with $G_i=GSing(X_i)$. Let 
$F\langle S_I\rangle$ be the free simplicial group generated by $S_I$, whose face and degeneracy operators follows
each of $G_i$. Notice that for each connected component of $K_I$ not containing the maximal vertex from $I$,
$f_I\co \underline{\widehat{G}}^I=\bigwedge_{i\in I}G_i\to F\langle S_I\rangle$ sending 
$(g_i)_{i\in I}$ to the generator \eqref{def:SI} is an injection of simpicial sets, and it becomes
a bijection of free simplicial groups, by the definition of $S_I$, after we extend $f_I$ to 
\begin{equation}
\wt{f}_I\co F[\bigvee\wt{b}_I\underline{\widehat{G}}^I]\to F\langle S_I\rangle \label{iso:MF}
\end{equation}
over a wedge sum of $\wt{b}_I$ copies of $\underline{\widehat{G}}^I$, where 
$F[\bigvee\wt{b}_I\underline{\widehat{G}}^I]$ denotes the free construction of Milnor \cite{Mil72}. Finally
the statement on \eqref{mor:TM} follows from Theorem \ref{thm:dec_loops}, Theorem \ref{thm:PV}, 
as $I$ runs through all subsets of $\{1,\ldots,m\}$ such that 
$\wt{b}_I>0$, and the homotopy equivalence
$|F[X]|\cong \Omega\Sigma |X|$ for any pointed simplicial set $X$. The last statement is obtained by applying 
the functor $\overline{W}$ on both sides of \eqref{mor:TM}.
\end{proof}

\begin{exm}\label{exm:Medrano}
	Consider the compact Lie group $G_k=S^k$, $k=0,1,3$ (as usual, $S^0$ is the cyclic group of order $2$ and $S^3$ is the collection of quaternions of norm $1$, which is the boundary of the ball $D^4$ as the quaternions of norm $\leq 1$).
	Let $X_i=BG_k$, namely $X_i=\mathbb{R}P^{\infty}, \mathbb{C}P^{\infty}$ or $\mathbb{H}P^{\infty}$, 
	so that $\Omega X_i\simeq S^k$, 
	$i=1,\ldots,m$, and let $K$ be the boundary complex of 
	a polygon with $m$ vertices. We have a homotopy equivalence 
	\[\Sigma|K_I|\simeq \begin{cases}\underbrace{S^1\vee\cdots\vee S^1}_{\widetilde{b}_I} & \wt{b}_I>0\\
	* & \wt{b}_I=0,\end{cases}\] 
	for every full subcomplex $K_I$ such that $I\not=\{1,\ldots,m\}$. 
	Notice that \eqref{eq:F} becomes
		\begin{equation}
	   \bigvee_{I\subset \{1,\ldots,m\}}\widetilde{b}_I \Sigma\widehat{S^k}^I\simeq 
	   \bigvee_{I\subsetneq\{1,\ldots,m\}}(\Sigma |K_I|\wedge \widehat{S^k}^I)\to F_k.\label{dec:F}
	\end{equation}
 Clearly $G_k$ acts on the pair $(D^{k+1},S^k)$ where the action is free on the boundary. Recall that for a given pair $(X,X')$ of pointed $CW$ complexes, we have the polyhedral product $\mathcal{Z}_K(X,X')=\cup_{\sigma\in K}B_{\sigma}$ with $B_{\sigma}=\prod_{i=1}^m Y_i$ where $Y_i=X$ if $i\in\sigma$ otherwise $Y_i=X'$ if $i\not\in\sigma$. The componentwise action of $G^m_k$ on $X^m$ as a cartesian product of $G=S^k$ acting on $X=D^{k+1}$, gives rise to an action of $G^m_k$ on $\mathcal{Z}_K(D^{k+1},S^k)$. Following \cite{DJ91}, a key observation is that in the fibration   
\begin{equation}\begin{CD}
       \mathcal{Z}_K(D^{k+1},S^k) @>>> EG^m_k\times_{G^m_k} \mathcal{Z}_K(D^{k+1},S^k) @>>> BG^m_k
\end{CD}\label{eq:DJH}
\end{equation}
obtained from the Borel construction of the action above, we have   
\[EG^m_k\times_{G^m_k} \mathcal{Z}_K(D^{k+1},S^k)=\mathcal{Z}_K(EG_k\times_{G_k} D^{k+1}, EG_k\times_{G_k} S^k) \]
by checking each component. Moreover, using an argument of homotopy colimits, the homotopy equivalence $(BG_k,*)\simeq (EG_k\times_{G_k} D^{k+1}, EG_k\times_{G_k} S^k)$ gives rise to a homotopy equivalence $\underline{BG_k}^K\simeq EG_k^m\times_{G^m_k} \mathcal{Z}_K(D^{k+1},S^k)$, whence a homotopy fibration
 \[\begin{CD}
       \mathcal{Z}_K(D^{k+1},S^k) @>>> \underline{BG_k}^K@>>> (BG_k)^m
\end{CD}
\]
obtained from \eqref{eq:DJH} (see \cite[Lemma 2.3.2]{DS07} for a different approach). The homotopy fibration implies a homotopy equivalence $F_k\simeq \mathcal{Z}_K(D^{k+1},S^k)$. Together with \cite[Theorem 2.21]{BBCG10} on the decomposition of the suspension of a polyhedral product, we have  a homotopy equivalence
	\begin{equation}
	 \Sigma F_k\simeq \bigvee_{I\subset\{1,\ldots,m\}}\Sigma^2|K_I|\wedge \widehat{S^k}^I.  \label{eq:BBCG0}
	 \end{equation}
Topologically, by \cite{BBCG15}, we have a homeomorphism $\mathcal{Z}_K(D^4,S^3)=\mathcal{Z}_{K(J')}(D^1,S^0)$ (resp. $\mathcal{Z}_K(D^2,S^1)=\mathcal{Z}_{K(J)}(D^1,S^0)$) where $K(J')$ (resp. $K(J)$) is a new simplicial complex obtained by iterated wedge constructions on $K$. Moreover, $\mathcal{Z}_K(D^4,S^3)$ and $\mathcal{Z}_K(D^2,S^1)$ have isomorphic ungraded cohomology rings (see \cite[Corollary 7.6]{BBCG15}). Using surgery theory, Gitler and L\'{o}pez de Medrano proved  that  $\mathcal{Z}_{K(J')}(D^1,S^0)$ (resp. $\mathcal{Z}_{K(J)}(D^2,S^1)$) is homeomorphic to a connected sum of sphere products whenever $\mathcal{Z}_{K}(D^1,S^0)$ is (see \cite[Theorem 2.4]{GLdM13}). As a conclusion, $\mathcal{Z}_K(D^{k+1},S^k)$ homeomorphic to a connected sum of sphere products.
 
To understand $F_k$ more explicitly we need a result of McGavran \cite{McG79} (see also \cite[Example 6.4]{BM06}), giving an explicit homeomorphism
\[
\mathcal{Z}_K(D^2,S^1)=\sharp_{j=1}^{m-3}j\binom{m-2}{j+1}S^{2+j}\times S^{m-j}
\]
where the enumeration of connected sums also holds for $k=0,3$, by \eqref{eq:BBCG0}. Consequently we have a homotopy equivalence
\begin{equation}
   F_k\simeq \sharp_{j=1}^{m-3}j\binom{m-2}{j+1}S^{k(j+1)+1}\times S^{k(m-j-1)+1}. \label{eq:Fk}
\end{equation}

Finally, after a comparison of  \eqref{eq:BBCG0},\eqref{eq:Fk} and \eqref{dec:F} we see that 
that the left-hand side of \eqref{dec:F} gives all cells of $F_k$, except one of the top dimension coming from $K$ itself (as a full subcomplex).
 Moreover, the attaching of the last cell gives the relations to obtain the correct homotopy type of $F_k$  (compare \cite{GSIP22}).  To complete this argument we need to show that \eqref{dec:F} induces an injection of homology groups, however, this requires further work.
\end{exm}
	
\section{On the homology}\label{sec:oth}
Let $\underline{G}^K$ be the polyhedral product of simplicial groups $\underline{G}=(G_i)_{i=1}^m$ with respect to
a flag complex $K$. The homotopy equivalence $\eta\co G\underline{\overline{W}(G)}^K\to \underline{G}^K$ of 
simplicial groups (see \eqref{def:eta}) enables us to understand the homology 
$H_*(\Omega |\underline{\overline{W}(G)}^K|)$ of the loop space on the polyhedral product 
$|\underline{\overline{W}(G)}^K|$.

We say that the \emph{length} of an element $g\in \underline{G}^K$ is $n$, denoted by $l(g)=n$, if $g$ has a locally minimal 
presentation $g=\prod_{k=1}^ng_{i_k,k}$ with 
$1\not=g_{i_k,k}\in G_{i_k}$ (see Theorem \ref{thm:normal}). 
Notice that $l(g)=0$ if and only if $g=1$, and $l(g)=1$ if and only if $1\not=g\in G_i$, for some $i$.
Consider the filtration on $\underline{G}^K$ such that 
\begin{equation}
F_n\underline{G}^K=\{g\in\underline{G}^K\mid l(g)\leq n\};\label{def:FnG}
\end{equation}
$F_n\underline{G}^K$ is closed under face and degeneracy maps, whence a simplicial subset,
and we have $\underline{G}^K=\bigcup_{n=0}^{\infty}F_n\underline{G}^K$. Moreover, under the multiplication we 
have 
\begin{equation}
F_n\underline{G}^K\cdot F_{n'}\underline{G}^K\subset F_{n+n'}\underline{G}^K.\label{prop:fil}
\end{equation}
	 Let $M$ be the monoid generated by a single idempotent element $t$, with a single 
	 relation $t^2=t$. 
	 Consider the right-angled monoid $M^K$ associated to $K$: 
	 $M^K$ is generated by idempotent elements 
	$t_i$, $i=1,\ldots,m$, subject to the defining relations $t_it_j=t_jt_i$ when $\{i,j\}\in K$. 
	 We say that $g\in \underline{G}^K$ is \emph{supported} by a word 
	 \begin{equation}
	 w=t_{i_1}t_{i_2}\ldots t_{i_n}\in M^K, \quad 1\leq i_1,\ldots, i_n\leq m, \label{word:w}
	 \end{equation}
	  if $g$ has a locally minimal presentation $\prod_{k=1}^ng_{i_k,k}$.
	Such a word \eqref{word:w} is \emph{locally minimal} in $M^K$, namely an exchange of 
	adjacent letters from the relations will make it larger in the lexicographic order.

Let $G_w=\prod_{k=1}^nG_{i_k}$ be the direct product associated to
a locally minimal word $w=\prod_{k=1}^nt_{i_k}\in M^K$, 
and let $\mu_w\co G_w\to \underline{G}^K$ be the morphism of 
simplicial sets sending $(g_{i_k,k})_{k=1}^n$ to $\prod_{k=1}^ng_{i_k,k}$. An element of $G_w$ is said to be 
\emph{degenerate} if at least one coordinate is $1$.
 We define $J_K(\underline{G}^K)$ as a simplicial monoid with filtration (following the construction of James \cite{Jam55}), 
 so that as a simplicial set,
\[
                      F_nJ_K(\underline{G}^K)=\coprod_{w\in\underline{M}^K\atop l(w)\leq n} G_w/\sim
\]
where $g\sim g'$ if and only if $\mu_w(g)=\mu_{w'}(g')$, with $g\in G_{w}$ and $g'\in G_{w'}$. It can be checked that 
the colimit of $\mu_w$ gives rise to a bijection 
$\mu\co J_K(\underline{G}^K)\to\underline{G}^K$ of simplicial sets, preserving 
the filtrations. 

An inverse of $\mu$ is given by the locally minimal presentations. With this inverse we endow $J_K(\underline{G}^K)$
with the structure of a simplicial monoid. More explicitly, the multiplication of $(g_{i_k,k})_{k=1}^n\in G_{w}$ and 
$(g'_{i'_k,k})_{k=1}^{n'}\in G_{w'}$ is given by the presentation of 
$\prod_{k=1}^ng_{i_k,k}\cdot \prod_{k'=1}^{n'}g'_{i'_k,k}$ (after a finite sequence of operations to make it locally minimal),
 which is an element in $G_{ww'}$, possibly 
degenerate, hence is well-defined under the equivalence relation above.
\begin{prop}
For each $n\geq 1$ we have an isomorphism
\begin{equation}
 F_{n}\underline{G}^K/F_{n-1}\underline{G}^K\to\bigvee_{w\in M^K\atop l(w)=n} \widehat{G}_{w}\label{iso:Gw}
\end{equation}
of simplicial sets, sending a element $g=\prod_{k=1}^ng_{i_k,k}$ in its locally minimal presentation 
to $(g_{i_k,k})_{k=1}^n\in \widehat{G}_{w}=\wedge_{k=1}^nG_{i_k}$ in the smash product of pointed simplicial sets 
associated to the word $w$.
\end{prop}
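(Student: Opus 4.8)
The plan is to read off the isomorphism from the filtration-preserving bijection $\mu\co J_K(\underline{G}^K)\to\underline{G}^K$ constructed above, combined with the uniqueness of locally minimal presentations in Theorem \ref{thm:normal}. Since $\mu$ carries $F_nJ_K(\underline{G}^K)$ onto $F_n\underline{G}^K$ for every $n$, it descends to a bijection of quotients $F_nJ_K(\underline{G}^K)/F_{n-1}J_K(\underline{G}^K)\to F_n\underline{G}^K/F_{n-1}\underline{G}^K$, so it is enough to understand the left-hand side, which by construction is assembled from the pieces $G_w/\!\sim$ over locally minimal words $w$ with $l(w)\le n$.

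First I would locate the preimage of the lower filtration. If a tuple $(g_{i_k,k})_{k=1}^n\in G_w$ with $l(w)=n$ is degenerate, say $g_{i_j,j}=1$, then $\mu_w((g_{i_k,k})_{k=1}^n)=\prod_k g_{i_k,k}$ is a product of at most $n-1$ nontrivial letters, hence has length $\le n-1$ and lands in $F_{n-1}\underline{G}^K$. Conversely, a non-degenerate tuple over a locally minimal $w$ yields a locally minimal presentation of length exactly $n$: local minimality of the index word $w$ in $M^K$ is precisely the condition that no operation I) followed by II) shortens $\prod_k g_{i_k,k}$, since the partial order on reduced words compares letters by their indices. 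Consequently, passing to the quotient collapses exactly the degenerate locus inside each $G_w$ with $l(w)=n$, together with all contributions from shorter words.

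Next I would exclude identifications between different summands. The relation $\sim$ glues $g\in G_w$ to $g'\in G_{w'}$ only when $\mu_w(g)=\mu_{w'}(g')$; for two non-degenerate tuples over locally minimal words of length $n$ this is an equality of two locally minimal presentations of length $n$, so Theorem \ref{thm:normal} forces $w=w'$ and equality of all coordinates. Hence distinct words index disjoint summands, and for a fixed $w$ with $l(w)=n$ the quotient of $G_w$ by its degenerate locus is by definition the smash product $\widehat{G}_w=\wedge_{k=1}^nG_{i_k}$. Collapsing the lower filtration merges all of these basepoints into one, producing the wedge $\bigvee_{l(w)=n}\widehat{G}_w$ and the map recorded in the statement.

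It remains to check that this degreewise bijection is simplicial, which I expect to be the only point needing care. Because $\underline{G}^K$ is a simplicial group, every face and degeneracy operator is a homomorphism and therefore acts on a locally minimal presentation coordinate-wise, $d_j(\prod_k g_{i_k,k})=\prod_k d_j(g_{i_k,k})$ and likewise for $s_j$, keeping the index word $w$ fixed while possibly sending some coordinate to $1$. Under the identification with $\widehat{G}_w$ this is exactly the coordinate-wise operator on the smash product, a coordinate hitting $1$ sending the class to the basepoint; this is consistent with such a tuple dropping into $F_{n-1}\underline{G}^K$ and hence to the basepoint of the quotient. Thus the bijection commutes with all simplicial operators and is an isomorphism of simplicial sets.
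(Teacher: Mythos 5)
Your argument is correct and matches the paper's approach: the paper proves this by exhibiting the commutative diagram \eqref{CD:FG} comparing the cofibration $G'_w\to G_w\to \widehat{G}_w$ (degenerate locus, product, smash) with the filtration quotient, which is exactly the content you spell out using the filtration-preserving bijection $\mu$ and the uniqueness of locally minimal presentations from Theorem \ref{thm:normal}. Your additional verification that the face and degeneracy operators act coordinate-wise (so that a coordinate hitting $1$ sends the class to the basepoint) is a detail the paper leaves implicit, and it is handled correctly.
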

The proposition above follows immediately from the commutative diagram
\begin{equation}
\begin{CD}
    {G}'_w @>i_{w}>> G_w @>p_w>> \widehat{G}_w\\
    @V\mu_wVV   @V\mu_wVV               @V\widehat{\mu}_wVV \\
    F_{n-1}(\underline{G}^K) @>>> F_n(\underline{G}^K) @>>> F_{n}\underline{G}^K/F_{n-1}\underline{G}^K                
\end{CD}\label{CD:FG}
\end{equation}
where $G'_w \subset G_w$ consists of degenerate elements, 
i.e., $(g_{i_k,k})_{k=1}^n$ with at least one $g_{i_k,k}=1$.


For a simplicial group $G$, we denote by $RG$ the simplicial group ring over the ring $R$ 
where the face and degeneracy operators are $R$-linear. Recall that the homology of $G$ with coefficients in $R$ 
is the homology $H_*(RG;\partial)$, with $\partial=\sum_{i}(-1)^id_i$ the alternating sum of $i$-th face maps. In what follows, we refer the readers to Theorems \ref{thm:prepolyHopf}, \ref{thm:Hopfmain} for details on the polyhedral product of connected Hopf algebras.  
\begin{thm}\label{thm:Nat}
Let $R$ be a field, $\underline{A}=(A_i)_{i=1}^m$ be $m$ connected Hopf algebras with
$A_i=H_*(RG_i;\partial_i)$ in which $G_i$ a reduced simplicial group, $i=1,\ldots,m$.  
Then we have an isomorphism
\[\psi_*\co\underline{A}^K\stackrel{\cong}{\rightarrow} H_*(\underline{G}^K;R) \]
of connected Hopf algebras. Moreover, this isomorphism is natural with respect to the morphism 
$\underline{G}^K\to\underline{\wt{G}}^{\wt{K}}$ (resp. $\underline{A}^K\to\underline{\wt{A}}^{\wt{K}}$) of polyhedral products, which is given by a simplicial monomorphism
$K\to\wt{K}$ of flag complexes with the same vertices together with morphisms $G_i\to\wt{G}_i$ of reduced simplicial groups (resp. induced morphisms $A_i\to\wt{A}_i$ of connected Hopf algebras), $i=1,\ldots,m$: the diagram
\begin{equation}
\begin{CD}
                         \underline{A}^K @>\psi_*>\cong> H_*(\underline{G}^K;R)\\
                         @VVV                              @VVV\\
                         \underline{\wt{A}}^{\wt{K}}@>\psi_*>\cong> H_*(\underline{\wt{G}}^{\wt{K}};R)\label{CD:KK}
\end{CD}
\end{equation}
commutes.
\end{thm}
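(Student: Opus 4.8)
The plan is to build $\psi_*$ out of the universal property of the colimit $\underline{A}^K$, and then to prove it is an isomorphism by comparing the word-length filtrations of the two sides. First, each inclusion $\iota_i\co G_i\to\underline{G}^K$ gives a Hopf algebra map $RG_i\to R\underline{G}^K$, hence on homology a morphism of connected Hopf algebras $A_i=H_*(RG_i;\partial_i)\to H_*(\underline{G}^K;R)$ (here $\underline{G}^K$ is reduced because each $G_i$ is, so the target lies in the category of connected Hopf algebras over $R$). Whenever $\{i,j\}\in K$ the images of $A_i$ and $A_j$ commute, since $G_i$ and $G_j$ commute in $\underline{G}^K$. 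By the universal property of $\underline{A}^K$ as a colimit in $\mathcal{H}_R^c$, these assemble into a single morphism $\psi_*\co\underline{A}^K\to H_*(\underline{G}^K;R)$, which is multiplicative and comultiplicative by construction.

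Next I would filter $R\underline{G}^K$ by the length filtration $F_n\underline{G}^K$ of \eqref{def:FnG}. Since the face and degeneracy maps preserve $F_n$, this makes $R\underline{G}^K$ a filtered complex, and by \eqref{iso:Gw} together with \eqref{CD:FG} its associated graded in filtration degree $n$ is $\bigoplus_{l(w)=n}\otimes_k J(RG_{i_k})$, the sum over locally minimal words $w=t_{i_1}\cdots t_{i_n}$ of the tensor products of the reduced group rings $J(RG_{i_k})$ (the augmentation ideals viewed as simplicial $R$-modules). The induced $E^0$-differential is exactly the total differential of this tensor product, because the face maps act diagonally on each smash factor of $\widehat{G}_w$; as $R$ is a field, the Künneth theorem (via Eilenberg--Zilber) then gives
\[
E^1\;\cong\;\bigoplus_{w}\otimes_k J(A_{i_k}),\qquad J(A_i)=\widetilde{H}_*(RG_i),
\]
the sum again over locally minimal words $w$. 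By the structure theorem for graph products of Hopf algebras (Theorem \ref{thm:Hopfmain}, the exact analogue of Theorem \ref{thm:normal}), the right-hand side is precisely $\underline{A}^K$ as a graded vector space, with its own word-length filtration already split.

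The key step is a leading-term computation showing that $\psi_*$ realizes this identification. The map $\psi_*$ is filtration-preserving, since a product $\prod_k a_{i_k}$ of classes $a_{i_k}\in J(A_{i_k})$ indexed by a locally minimal word of length $n$ lies in $F_nH_*(\underline{G}^K;R)$. Representing each $a_{i_k}$ by a cycle $z_{i_k}\in RG_{i_k}$ and forming their product in the simplicial ring $R\underline{G}^K$, one reads off from \eqref{CD:FG} that the image of this cycle in $\mathrm{gr}_n$ is $\otimes_k z_{i_k}$, so the leading term of $\psi_*(\otimes_k a_{i_k})$ in $E^1$ is the corresponding basis element $\otimes_k a_{i_k}$. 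Thus every basis element of $E^1$ occurs as the leading term of an honest homology class and therefore survives to $E^\infty$; it follows that $E^\infty=E^1$, the spectral sequence collapses, and $\mathrm{gr}(\psi_*)$ sends the basis of $\mathrm{gr}\,\underline{A}^K$ bijectively onto that of $E^\infty$. Hence $\psi_*$ is a filtered isomorphism, and being a Hopf algebra morphism, an isomorphism in $\mathcal{H}_R^c$.

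Finally, the naturality of \eqref{CD:KK} is immediate: the inclusions $\iota_i$, the products, and the colimit universal property are all natural in $K$ and in the $G_i$, so the morphisms $G_i\to\wt{G}_i$ and the inclusion $K\to\wt{K}$ induce compatible maps making the square commute. The hard part will be pinning down the $E^0$-differential as the tensor-product differential so that Künneth applies on the nose, together with the accompanying leading-term identification; both rest on the diagram \eqref{CD:FG} and on the diagonal action of the face maps on each factor of $\widehat{G}_w$. Once these are secured, the collapse of the spectral sequence is forced by the mere existence of $\psi_*$, so no higher differential need ever be computed.
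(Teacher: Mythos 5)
Your proposal is correct and follows essentially the same route as the paper: the word-length filtration on $\underline{G}^K$, the identification of the associated graded via \eqref{iso:Gw} and \eqref{CD:FG} together with the K\"{u}nneth/Eilenberg--Zilber theorem, and the structure theorem (Theorem \ref{thm:Hopfmain}) giving the locally minimal basis of $\underline{A}^K$. The only difference in execution is that the paper realizes $\psi$ at the chain level (choosing sections $\kappa_i$ of $\mathrm{Ker}\,\partial_i\to A_i$ and composing with the shuffle map $\nabla$ and $\mu_w$), so the comparison is a literal morphism of filtered complexes that is an isomorphism on $E^1$-terms, which sidesteps your permanent-cycle collapse argument; the Hopf-algebra compatibility and the naturality of \eqref{CD:KK} are then verified via the universal property exactly as you propose.
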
 
\begin{proof}
As $R$ is a field, we choose a morphism 
\[\kappa_i\co H_*(RG_i;\partial_i)\to (RG_i;\partial_i)\] 
of differential graded $R$-modules by specifying representatives in $(RG_i;\partial_i)$, with trivial differentials in 
$H_*(RG_i;\partial_i)$. More precisely, $\kappa_i$ is the composition
\[
    H_*(RG_i;\partial_i)=\frac{\mathrm{Ker}\partial_i}{\mathrm{Im}\partial_i}\stackrel{\iota_i}{\longrightarrow} \mathrm{Ker}\partial_i\stackrel{\subset}{\longrightarrow} RG_i
\]
in which $\iota_i$ is a chosen section of the quotient $\mathrm{Ker}\partial_i\to H_*(RG_i;\partial_i)$.
By K\"{u}nneth Theorem, we have an isomorphism 
\begin{equation}
H_*(R(\prod_{k=1}^nG_{i_k});\underline{\partial})=\bigotimes_{i=1}^nH_*(RG_{i_k};\partial_{i_k})\label{iso:H}
\end{equation}
of graded $R$-modules, which is induced by the composition  
\begin{equation}\begin{CD}
  \bigotimes_{k=1}^n H_*(RG_i;\partial_i)@>\underline{\kappa}>> \bigotimes_{k=1}^n(RG_{i_k};\partial_{i_k})@>\nabla>\simeq> (R(\prod_{k=1}^nG_{i_k});\underline{\partial}),\label{def:nabla}
\end{CD}\end{equation}
where the shuffle map $\nabla$ is a chain homotopy inverse of the 
Alexander-Whitney map. Notice that 
\[H_*(RG_i;\partial_i)=R\oplus J_i \]
 as $R$-modules, where $J_i$ is the cokernel of the coaugmentation
$R\to H_*(RG_i;\partial_i)$. Now we specify a linear basis $S_i$ for each coaugmentation ideal $J_i$, 
and consider the diagram
\begin{equation}
\begin{CD}
    A'_w @>i_w>>  \bigotimes_{k=1}^n(R\oplus J_{i_k})@>p_w>>R\oplus\bigotimes_{k=1}^nJ_{i_k}\\
  @V\simeq VV          @V\simeq VV               @V\simeq VV\\
R(G'_w)@>i_{w}>>R(\prod_{k=1}^nG_{i_k})@>p_w>>R(\wedge_{k=1}^n G_{i_k}),
\end{CD}\label{diag:RC}
\end{equation}
in which we identify $A'_w$ under $i_w$ as a $R$-linear subspace, spanned by 
elements of the form $\otimes_{k=1}^nc_{i_k,k}$, $c_{i_k,k}=1$ or 
$s_{i_k,k}\in S_{i_k}$, $k=1,\ldots,n$, with
at least one $c_{i_k,k}=1$. Clearly for such a basis element, 
$p_w(\otimes_{k=1}^nc_{i_k,k})=0$ unless $c_{i_k,k}=1$ for every $k$. 
We see that the Diagram \eqref{diag:RC} of differential graded $R$-modules is commutative, with each vertical 
morphism inducing an isomorphism in homology. 

Consider a filtration on $\underline{A}^K$ with 
$F_n(\underline{A}^K)$ spanned by all basis elements $a=\prod_{k=1}^ls_{i_k,k}$ 
in locally minimal 
presentations (see Theorem \ref{thm:Hopfmain}), with $l\leq n$.  With trivial differentials in $\underline{A}^K$, we define a morphism
$\psi\co\underline{A}^K\to R\underline{G}^K$ of differential graded $R$-modules
as the composition (see \eqref{def:nabla}, \eqref{CD:FG} , 
where we replaced $G_w$ by $RG_w$)
\[\psi(a)=\mu_{w}\circ \nabla\circ \underline{\kappa}(\otimes_{k=1}^ns_{i_k,k}),\]
which is clearly a chain map preserving the filtrations. Consider the spectral sequences with respect to the filtrations $F_n$ on 
$\underline{A}^K$ and $R\underline{G}^K$, respectively. Diagrams \eqref{CD:FG} and \eqref{diag:RC} 
implie that $\psi$ induces an isomorphism on $E^1$-terms, hence on passage to homology, 
$\psi_*\co \underline{A}^K\to H_*(R\underline{G}^K)$ is an isomorphism of graded $R$-modules. 
Finally, to see that $\psi_*$ is actually an isomorphism of 
connected Hopf algebras, from the universal property of $\underline{A}^K$ it suffices to show that whenever restricted to $\sigma\in K$, 
$\psi_*\co \otimes_{i\in\sigma}A_i\to H_*(\underline{G}^K;R)$ is a morphism of connected Hopf algebras, and the diagram
\[
\xymatrix{
	    \bigotimes_{i'\in\sigma'}A_{i'} \ar[rr]^{F(\sigma'\to\sigma)} \ar[rrd]^{\psi_*} &  &   \bigotimes_{i\in\sigma}A_{i}\ar[d]^{\psi_*}\\
	                   &        &      H_*(\underline{G}^K;R)          
	}
\]
commutes. This is clear from the definition of $\psi_*$, and the fact that the isomorphism  \eqref{iso:H} from K\"{u}nneth Theorem is an isomorphism of connected Hopf algebras.

On the naturality, the assumptions imply that $\underline{A}^K\to\underline{\wt{A}}^{\wt{K}}$ and 
$R\underline{G}^K\to R\underline{\wt{G}}^{\wt{K}}$ are both filtration preserving, hence the Diagram 
\eqref{CD:KK} commutes.
\end{proof}
\begin{cor}
Let $\underline{X}^K$ be a polyhedral product associated to a flag complex $K$ and
simply connected and pointed $CW$ complexes $\underline{X}=(X_i)_{i=1}^m$. Then we 
have an isomorphism
\[
   \psi_*\co\underline{A}^K\stackrel{\cong}{\rightarrow} H_*(\Omega\underline{X}^K;R),
\]
when $R$ is a field.
Moreover, this isomorphism is natural with respect to a morphism 
$\underline{X}^K\to\underline{\wt{X}}^{\wt{K}}$ of polyhedral products with respect to a 
simplicial inclusion
$K\to\wt{K}$ of flag complexes with the same vertices, together with morphisms $X_i\to\wt{X}_i$ of simply connected and pointed $CW$ complexes, $i=1,\ldots,m$: the diagram
\begin{equation}
\begin{CD}
                         \underline{A}^K @>\psi_*>\cong> H_*(\Omega\underline{X}^K;R)\\
                         @VVV                              @VVV\\
                         \underline{\wt{A}}^{\wt{K}}@>\psi_*>\cong> H_*(\Omega \underline{\wt{X}}^{\wt{K}};R)\label{CD:KK2}
\end{CD}
\end{equation}
commutes.
\end{cor}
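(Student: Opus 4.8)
The plan is to reduce the statement to Theorem \ref{thm:Nat} by identifying $\Omega\underline{X}^K$ with the geometric realization of the simplicial group $\underline{G}^K$, where $G_i=GSing(X_i)$ is the Kan loop group. First I would fix, for each $i$, a reduced model of $Sing(X_i)$ (with a single $0$-simplex, as recalled before \eqref{diag:HC}), which is legitimate since each $X_i$ is path-connected; because $X_i$ is simply connected, the resulting Kan construction $G_i$ is a \emph{reduced} simplicial group whose realization models $\Omega X_i$. Consequently $H_*(RG_i;\partial_i)=H_*(|G_i|;R)=H_*(\Omega X_i;R)=A_i$ is a connected Hopf algebra, so we are placed in exactly the hypotheses of Theorem \ref{thm:Nat}.

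Next I would assemble the chain of homotopy equivalences already established in Section \ref{sec:dec}. By the discussion around \eqref{diag:HC}, the realization $|\underline{\overline{W}(G)}^K|$ is homotopy equivalent to $\underline{X}^K$, so $\Omega\underline{X}^K\simeq\Omega|\underline{\overline{W}(G)}^K|$. The homotopy equivalence $\eta\co G\underline{\overline{W}(G)}^K\to\underline{G}^K$ of \eqref{def:eta}, together with the comparison $\alpha\co|G(-)|\to\Omega|-|$ sending simplicial circuits to Moore loops, yields $\Omega\underline{X}^K\simeq|\underline{G}^K|$, as displayed in the upper two rows of Diagram \eqref{diag:HF}. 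Taking $R$-homology gives $H_*(\Omega\underline{X}^K;R)\cong H_*(\underline{G}^K;R)$, and composing with the isomorphism $\psi_*\co\underline{A}^K\to H_*(\underline{G}^K;R)$ of Theorem \ref{thm:Nat} produces the desired isomorphism $\psi_*\co\underline{A}^K\to H_*(\Omega\underline{X}^K;R)$.

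For the naturality, a map $X_i\to\wt{X}_i$ of simply connected pointed $CW$ complexes induces, after passing to reduced models, a morphism $G_i\to\wt{G}_i$ of reduced simplicial groups, and the simplicial inclusion $K\to\wt{K}$ together with these induces $\underline{G}^K\to\underline{\wt{G}}^{\wt{K}}$. The square \eqref{CD:KK2} then factors as the outer rectangle obtained by stacking the naturality square \eqref{CD:KK} of Theorem \ref{thm:Nat} on top of the naturality of the equivalences in \eqref{diag:HF}, which in turn rests on the naturality of $\phi$ from Theorem \ref{thm:weakequi} and on the functoriality of $G$ and $\overline{W}$. Since each constituent square commutes, so does \eqref{CD:KK2}.

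I expect the main obstacle to be bookkeeping rather than a genuinely new idea: one must verify that the identification $A_i=H_*(\Omega X_i;R)$ and the reduction to a reduced Kan model are compatible with all the functorial maps, so that the several naturality squares truly glue into \eqref{CD:KK2}. The point requiring the most care is ensuring that the comparison map $\alpha$ and the equivalence $\eta$ are natural with respect to the induced morphisms $G_i\to\wt{G}_i$; this follows from the naturality already recorded for $\eta$ in Diagram \eqref{diag:HF} and from the functoriality of the Kan construction, but it is the step where the argument is most easily underspecified.
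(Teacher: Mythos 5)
Your proposal is correct and follows essentially the same route as the paper: replace $X_i$ by $|\overline{W}(G_i)|$ with $G_i=GSing(X_i)$ reduced (using simple connectivity), identify $\Omega\underline{X}^K$ with $|\underline{G}^K|$ via the natural equivalences $\alpha$ and $\eta$ from Section \ref{sec:dec}, and then invoke Theorem \ref{thm:Nat} together with the stacking of its naturality square on the naturality of these equivalences. No substantive differences.
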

\begin{proof}
Up to homotopy equivalence we replace $X_i$ by $|\overline{W}(G_i)|$ (see \eqref{diag:HC} for more details), 
with $G_i=GSing(X_i)$ the Kan construction of the simplicial set $Sing(X_i)$ of singular triangles. 
The triviality of $\pi_1(X_i)$ implies that $G_i$ is reduced, up to homotopy. 
 The corresponding homotopy equivalence $G\underline{\overline{W}(G)}^K\to \underline{G}^K$ (see \eqref{def:eta}) is natural, giving a commutative diagram
\[
 \begin{CD}
                   \Omega|\underline{\overline{W}(G)}^K|@<<< |G\underline{\overline{W}(G)}^K|@>>>|\underline{G}^K|\\
                             @VVV                                                       @VVV                                                @VVV\\
                 \Omega|\underline{\overline{W}(\wt{G})}^{\wt{K}}| @<<<|G\underline{\overline{W}(\wt{G})}^{\wt{K}}|  
                 @>>>|\underline{\wt{G}}^{\wt{K}}| .          
 \end{CD}
\]
of spaces, with each horizontal morphism a homotopy equivalence. 
Now the statements follow from Theorem \ref{thm:Nat}.
\end{proof}
In particular, let $\underline{A}^K\to \underline{A}^{\Delta}=\otimes_{i=1}^mA_i$ be the morphism of connected and cocommutative Hopf algebras induced by the inclusion $\underline{X}^K\to\underline{X}^{\Delta}=\prod_{i=1}^m X_i$, with $\Delta$ the simplex whose vertices coincide with $K$. From the fibration $F\to\underline{X}^K\to \prod_{i=1}^m X_i$ with $F$ the homotopy fiber of the inclusion, we see that the composition 
\[H_*(\Omega F;R)\to H_*(\Omega \underline{X}^K;R)\to H_*(\Omega \prod_{i=1}^m X_i;R)\]
is trivial, with the first morphism monomorphic and second epimorphic, such that 
\[R\otimes_{H_*(\Omega F;R)}H_*(\Omega \underline{X}^K;R)\cong H_*(\Omega \prod_{i=1}^m X_i;R),\]
by Theorem \ref{thm:dec_loops}. The following proposition follows from \cite[Lemma 3.4]{CMN79}.
\begin{prop}\label{prop:exact}
When $R$ is a field and $K$ is a flag complex, 
we have an isomorphism $H_*(\Omega F;R)\cong \underline{A}^K\square_{\underline{A}^{\Delta}}R$ of connected and cocommutative Hopf algebras, where $\underline{A}^K\square_{\underline{A}^{\Delta}}R$ is the 
cotensor associated to the epimorphism 
\[
\begin{CD}
\underline{A}^K@>>> \underline{A}^{\Delta}\\
@V\psi_* V\cong V                          @V\psi_* V\cong V\\
H_*(\Omega\underline{X}^K;R)@>>>H_*(\Omega\underline{X}^{\Delta};R)
\end{CD} 
\]
in which $F$ is the homotopy fiber of the inclusion $\underline{X}^K\to\underline{X}^{\Delta}$ of polyhedral products
of simply connected and pointed $CW$ complexes $\underline{X}=(X_i)_{i=1}^m$.
\end{prop}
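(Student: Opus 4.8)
The plan is to recognize the proposition as the dual, in the category of connected cocommutative Hopf algebras over $R$, of the loop-space splitting already obtained, and to deduce it by verifying the hypotheses of \cite[Lemma 3.4]{CMN79}. Write $A=\underline{A}^K\cong H_*(\Omega\underline{X}^K;R)$ and $B=\underline{A}^{\Delta}\cong H_*(\Omega\prod_{i=1}^mX_i;R)$ under the isomorphisms $\psi_*$, and let $q\co A\to B$ be the abelianization, a surjection of connected cocommutative Hopf algebras. Equip $A$ with the right $B$-comodule structure $\rho=(1\otimes q)\circ\Delta\co A\to A\otimes B$, so that $A\square_B R$ is by definition the equalizer of $\rho$ and $a\mapsto a\otimes 1$; denote this kernel by $A'$. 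The goal is to identify $A'$ with $H_*(\Omega F;R)$ as Hopf algebras.

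First I would feed in the topological input. By Theorem \ref{thm:dec_loops} the looped fibration $\Omega F\to\Omega\underline{X}^K\to\Omega\prod_{i=1}^mX_i$ splits up to homotopy as $\Omega\underline{X}^K\simeq\Omega F\times\Omega\prod_{i=1}^mX_i$, and this splitting preserves the left action of the monoid $\Omega F$. Applying $H_*(-;R)$ and the K\"{u}nneth theorem over the field $R$ yields an isomorphism $A\cong H_*(\Omega F;R)\otimes B$ of left $H_*(\Omega F;R)$-modules and, crucially, of right $B$-comodules, the comodule structure being exactly the $\rho$ induced by $q$. Thus the splitting exhibits $A$ as a cofree (extended) right $B$-comodule $H_*(\Omega F;R)\otimes B$. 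This is precisely the situation of \cite[Lemma 3.4]{CMN79}: cofreeness forces the higher derived cotensor (equivalently, the higher $\mathrm{Cotor}$ governing the Eilenberg--Moore description of $H_*(\Omega F;R)$) to vanish, so that $H_*(\Omega F;R)\cong A\square_B R=A'$ as graded $R$-modules. Concretely, for the cofree comodule $H_*(\Omega F;R)\otimes B$ one has $(H_*(\Omega F;R)\otimes B)\square_B R\cong H_*(\Omega F;R)$.

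Finally I would upgrade this to an isomorphism of connected cocommutative Hopf algebras, which is where the care is needed. The fiber inclusion $\Omega F\to\Omega\underline{X}^K$ is a map of group-like spaces, hence induces a monomorphism $i_*\co H_*(\Omega F;R)\to A$ of connected cocommutative Hopf algebras whose composite with $q$ is trivial, as recorded just before the statement. Since the image of $i_*$ is a sub-Hopf-algebra on which $q$ restricts to the augmentation, cocommutativity and the coalgebra structure give $\rho(i_*x)=(1\otimes q)\Delta(i_*x)=i_*x\otimes 1$, so $i_*$ factors through the cotensor, i.e. $i_*(H_*(\Omega F;R))\subseteq A'$. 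The resulting Hopf-algebra map $H_*(\Omega F;R)\to A'$ is injective, and by the graded-dimension count of the previous paragraph it is an isomorphism. Transporting the Hopf structure back through $\psi_*$ gives $H_*(\Omega F;R)\cong\underline{A}^K\square_{\underline{A}^{\Delta}}R$ as connected cocommutative Hopf algebras. The main obstacle, and the point at which \cite[Lemma 3.4]{CMN79} does the real work, is checking that the comodule structure produced geometrically by the splitting of Theorem \ref{thm:dec_loops} coincides with the algebraic coaction $\rho$ defining the cotensor, and that the cofreeness it provides suffices to kill the higher cotensor obstructions; once these are in place the remaining identifications are formal.
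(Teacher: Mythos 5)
Your proposal is correct and follows essentially the same route as the paper, which records exactly the same consequences of Theorem \ref{thm:dec_loops} (trivial composite, monomorphism, epimorphism, and the freeness/cofreeness of $\underline{A}^K$ over $H_*(\Omega F;R)$ with quotient $\underline{A}^{\Delta}$) and then cites \cite[Lemma 3.4]{CMN79}. You merely spell out in more detail the comodule identification and the upgrade to a Hopf algebra isomorphism that the paper leaves implicit.
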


\section{On the loop space of the homotopy fiber}\label{sec:loop}
\subsection{Whitehead's Acyclic Theorem for connected Hopf algebras}
Let  $A=\oplus_{n\geq 0}A_n$ be a connected algebra over a field $k$, with the augmentation 
$\varepsilon\co A\to k$.  The degree 
of a homogeneous element $a\in A$ shall be denoted as $|a|$. 
Recall that the (unnormalized) reduced bar construction $\bar{B}(A)=\oplus_{s\geq 0}A^s$ 
is a simplicial vector space  over $k$, where $A^0=k$ and 
\[A^s=\underbrace{A\otimes A\otimes\cdots\otimes A}_{s \ \text{copies}}.\]
An element $\otimes_{i=1}^s a_i\in A^s$ is usually denoted as 
$[a_1|a_2| \cdots |a_s]$. The face and degeneracy operators are given by (we keep $\varepsilon(a_1)$ and $\varepsilon(a_s)$ to match the degrees)
\begin{align}
  d_i([a_1|\cdots|a_s])&=\begin{cases}
      \varepsilon(a_1) [a_2|\cdots|a_s]  & i=0\\
       (-1)^{|a_1|+\ldots+|a_i|}[a_1|\cdots|a_ia_{i+1}|\cdots|a_s] & 1\leq i\leq s-1\\
     (-1)^{|a_1|+\ldots+|a_{s-1}|}  [a_1|\cdots|a_{s-1}]\varepsilon(a_s) & i=s,
  \end{cases}\\\label{def:d_i}
  s_i([a_1|\cdots|a_s])&=\begin{cases}
  (-1)^{\sum_{j<i}|a_j|}[a_1|\cdots|a_{i-1}|1|a_{i+1}|\cdots|a_s] & 0\leq i\leq s, s\geq 1\\
  [1]  & s=0.
\end{cases}
\end{align}
It turns out that under $\bar{d}=\sum_{i=0}^s(-1)^id_i$, the subspace generated by all degenerate 
simplices is acyclic, and after modulo these degenerate simplices $\bar{B}(A)$ is called the 
normalized reduced bar construction. It is well-known that the homology groups of 
$(\bar{B}(A),\bar{d})$ coincides with the homotopy groups of the simplicial vector space 
$\bar{B}(A)$, using the Moore chain complex.

The left bar complex $B(A)=A\otimes_{\tau}\bar{B}(A)$ gives a free resolution\footnote{Following  \cite{HMS74}, we use the notation $\otimes_{\tau}$ to distinguish its differential with the one in a tensor product.}  
\begin{equation}\begin{CD}
                       \cdots  @>>> B_s(A)@>>> \cdots@>>>  B_1(A) @>>>B_0(A)@>>> k \label{def:BA}
\end{CD}
\end{equation}
of $k$ as left $A$-modules, in which $B_s(A)=A\otimes A^s$, where the differential $d\co B_s(A)\to B_{s-1}(A)$ is given by
\begin{align*}
    d (a[a_1|a_2|\cdots|a_s])&=(-1)^{|a|}aa_1[a_2|\cdots|a_s]+\sum_{j=1}^{s-1}(-1)^{n_j}a[a_1|\cdots|a_ja_{j+1}|\cdots|a_s]\\
    &+(-1)^{n_{s-1}}a[a_1|\cdots|a_{s-1}]\varepsilon(a_s)
\end{align*}
with $n_j=|a|+|a_1|+\cdots+|a_j|+j$. We have $\bar{B}(A)=k\otimes_{A}B(A)$ and 
$\bar{d}=(\mathrm{id}_k,d)$. 

Let $\mathcal{H}_k^c$ be the category of connected Hopf algebras over $k$, and let $\mathcal{S}V_k^c$ be the category of connected simplicial vector spaces over $k$. Given monomorphisms $A_3\to A_i$ in $\mathcal{H}_k^c$, $i=1,2$, let $A=A_1\coprod_{A_3} A_2\in \mathcal{H}_k^c$ be the colimit of the diagram $A_1\leftarrow A_3\rightarrow A_2$ (see Theorem \ref{thm:LS} for more details). 
We define $\bar{B}=\bar{B}(A_1)\coprod_{\bar{B}(A_3)}\bar{B}(A_2)\in \mathcal{S}V_k^c$ as the colimit of the corresponding diagram $\bar{B}(A_1)\leftarrow \bar{B}(A_3)\rightarrow \bar{B}(A_2)$. 
Notice that $\bar{B}\subset\bar{B}(A)$ is a proper simplicial subspace spanned by elements of the form 
$[b_1|\cdots|b_s]$ where either
 $b_i\in A_1$ for all $i$ or $b_i\in A_2$ for all $i$, $1\leq i\leq s$ (we identify $A_3$ as a Hopf subalgebra of $A_i$, $i=1,2$). 
 The following theorem is an analogue of Whitehead's Acyclic Theorem (see Theorem \ref{thm:Whitehead}).
\begin{thm}\label{thm:Whitehead2}
Let $A=A_1\coprod_{A_3} A_2\in \mathcal{H}_k^c$ (resp. $\bar{B}=\bar{B}(A_1)\coprod_{\bar{B}(A_3)}\bar{B}(A_2)\in \mathcal{S}V_k^c$) 
be the colimit of the diagram $A_1\leftarrow A_3\rightarrow A_2$ (resp. $\bar{B}(A_1)\leftarrow \bar{B}(A_3)\rightarrow \bar{B}(A_2)$) with respect to monomorphisms $A_3\to A_i$ in $\mathcal{H}_k^c$, $i=1,2$. 
The complex $B=A\otimes_{\tau}\bar{B}$ gives a free resolution of $k$ as left $A$-modules. The inclusion  $i_{\bar{B}}\co\bar{B}\to\bar{B}(A)$ induces an isomorphism of homology groups.
\end{thm}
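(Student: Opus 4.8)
The plan is to deduce the whole statement from a single assertion: that $B=A\otimes_{\tau}\bar{B}$ is a free resolution of $k$ as a left $A$-module. Freeness is immediate, since in each homological degree $B_s=A\otimes_k\bar{B}_s$ with $\bar{B}_s$ a $k$-vector space. Granting acyclicity, the homology isomorphism is then formal: $i_{\bar B}$ is $k\otimes_A(-)$ applied to the $A$-linear chain map $B\hookrightarrow\bar{B}(A)$'s resolution $B(A)$, i.e.\ to $B\hookrightarrow B(A)$, a map covering $\mathrm{id}_k$ between two free resolutions of $k$. By the comparison theorem such a map is a chain homotopy equivalence over $A$, and since $k\otimes_A(-)$ preserves chain homotopies, $i_{\bar B}\co k\otimes_A B=\bar B\to k\otimes_A B(A)=\bar B(A)$ is a homotopy equivalence, hence an isomorphism on homology. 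So everything reduces to showing $B\to k$ is acyclic.

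To prove acyclicity I would exploit the defining pushout of $\bar B$. Since each $A_3\to A_i$ is a monomorphism, so is $\bar B(A_3)\to\bar B(A_i)$ in every simplicial degree, and the colimit sits in a short exact sequence of complexes
\[
0\longrightarrow \bar{B}(A_3)\longrightarrow \bar{B}(A_1)\oplus\bar{B}(A_2)\longrightarrow \bar{B}\longrightarrow 0,
\]
with first map $x\mapsto(x,-x)$. Applying $A\otimes_{\tau}(-)$ — which in each degree is the exact functor $A\otimes_k(-)$, and which commutes with the inclusions because the twisting only touches the leftmost letter, which lies in $A_3\subseteq A_1\cap A_2$ — yields a short exact sequence of complexes of free $A$-modules whose outer terms are $A\otimes_{\tau}\bar B(A_i)\cong A\otimes_{A_i}B(A_i)$ for $i=1,2,3$. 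By the structure theorem for the amalgam (Theorem \ref{thm:LS}), $A$ is free as a right module over each $A_i$; as $B(A_i)$ resolves $k$ over $A_i$, the complex $A\otimes_{A_i}B(A_i)$ computes $\mathrm{Tor}^{A_i}_*(A,k)=A\otimes_{A_i}k=A/AJ_i$ concentrated in degree $0$ (here $J_i=\ker\varepsilon_i$ is the augmentation ideal). The associated long exact sequence then forces $H_n(B)=0$ for $n\ge 2$ and leaves
\[
0\to H_1(B)\to A/AJ_3 \xrightarrow{\,f\,} A/AJ_1\oplus A/AJ_2\to H_0(B)\to 0,
\qquad f([a])=([a],-[a]).
\]

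It remains to identify this four-term sequence with the reduced Mayer--Vietoris sequence of the amalgam, namely to prove exactness of
\[
0\to A/AJ_3\xrightarrow{\,f\,} A/AJ_1\oplus A/AJ_2\xrightarrow{\,(\varepsilon,\varepsilon)\,} k\to 0,
\]
which gives $H_1(B)=0$ and $H_0(B)=k$ and completes the resolution argument. Injectivity of $f$ is precisely the intersection identity $AJ_1\cap AJ_2=AJ_3$, and the cokernel computation $A/(AJ_1+AJ_2)\cong k$ is the surjectivity/exactness at $k$; both I would read off from the normal form of $A=A_1\coprod_{A_3}A_2$ provided by Theorem \ref{thm:LS}, exactly as for amalgamated products of group rings (the freeness of $A$ over each $A_i$ being the same input used above). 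This normal-form bookkeeping — establishing the freeness and the equality $AJ_1\cap AJ_2=AJ_3$ — is the one genuinely nontrivial step and the main obstacle; once it is in hand, the homological machinery of the preceding paragraph runs automatically.
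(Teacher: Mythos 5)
Your argument for the second half of the statement (that $i_{\bar B}$ induces a homology isomorphism once acyclicity of $B$ is known) is correct and essentially the same as the paper's: the paper compares the two rows $A\to B\to\bar B$ and $A\to B(A)\to\bar B(A)$ and invokes the Moore/Zeeman comparison theorem, while you phrase it as a chain map between free resolutions covering $\mathrm{id}_k$ being a chain homotopy equivalence that survives $k\otimes_A(-)$; these are interchangeable. For the acyclicity of $B$, however, your route is genuinely different from the paper's. The paper writes down an explicit contracting homotopy $S\colon B_*\to B_{*+1}$ in the style of Adams, using the normal form $a=xc_{j_2,2}c_{j_1,1}a_3$ from Theorem \ref{thm:LS} to decide whether $S$ peels off $c_{j_1,1}a_3$ or only $a_3$ into the bar part, and then verifies $Sd+dS=\mathrm{id}-\varepsilon$ directly. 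You instead use the Mayer--Vietoris short exact sequence $0\to\bar B(A_3)\to\bar B(A_1)\oplus\bar B(A_2)\to\bar B\to 0$, identify the outer terms of $A\otimes_{\tau}(-)$ with $A\otimes_{A_i}B(A_i)$, kill their higher homology by flatness of $A$ over $A_i$, and reduce everything to the exactness of $0\to A/AJ_3\to A/AJ_1\oplus A/AJ_2\to k\to 0$. This is the standard Bass--Serre/group-ring strategy transported to $\mathcal{H}^c_k$, and the skeleton is sound: the pushout in $\mathcal{S}V^c_k$ is indeed the cokernel you describe, it agrees with the paper's identification of $\bar B$ as the sum $\bar B(A_1)+\bar B(A_2)$ inside $\bar B(A)$ (because $A_1\cap A_2=A_3$ by the normal form), and the long exact sequence bookkeeping is right.

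The gap is that you have not proved the two inputs on which your entire acyclicity argument rests: that $A$ is free (or at least flat) as a \emph{right} $A_i$-module for $i=1,2,3$, and the intersection identity $AJ_1\cap AJ_2=AJ_3$. You flag these yourself as ``bookkeeping,'' but they are not formal consequences of the mere statement of Theorem \ref{thm:LS}: the freeness over $A_1$ and $A_2$ requires regrouping the basis \eqref{def:right} according to whether a word ends in a $Z_1$- or $Z_2$-letter and invoking the Milnor--Moore splitting $A_i\cong \wt{C}_i\otimes A_3\oplus A_3$, and the identity $AJ_1\cap AJ_2=AJ_3$ requires expressing $AJ_1=\bigoplus_{w\in W_1}wJ_1$ and $AJ_2=\bigoplus_{w\in W_2}wJ_2$ in a common basis of $A$ and checking the overlap term by term. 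That verification is of essentially the same length and delicacy as the paper's direct check of $Sd+dS=\mathrm{id}-\varepsilon$, so while your reduction is a legitimate and arguably more conceptual alternative, as written the proposal stops exactly where the real work begins. To complete it you would need to carry out the normal-form computation of $AJ_1\cap AJ_2$ explicitly (or cite a reference for the amalgam Mayer--Vietoris sequence in $\mathcal{H}^c_k$, e.g. the relevant section of Lemaire).
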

\begin{proof}
It suffices to suppose $\bar{B}$ is normalized. First we show that $B$ is acyclic by a contracting homotopy $S\co B_*\to B_{*+1}$, following 
Adams \cite{Ada60}. By Theorem \ref{thm:LS}, $A$ admits a basis of the form $a=xc_{j_2,2}c_{j_1,1}a_3$ with $x$ a word of letters $c_{j_t,t}$ (see \eqref{def:right}). We define 
\[
   S(a[b_1|\cdots |b_s])=\begin{cases}(-1)^{|xc_{j_2,2}|}xc_{j_2,2}[c_{j_1,1}a_3|b_1|\cdots|b_s] & \text{$c_{j_1,1},b_1\in A_i$, $i=1$ or $2$ }\\
   (-1)^{|xc_{j_2,2}c_{j_1,1}|}xc_{j_2,2}c_{j_1,1}[a_3|b_1|\cdots|b_s] & \text{otherwise},
    \end{cases}
\]
and extend $S$ linearly to all elements from $B_*$. It remains to check 
\begin{equation}
Sd+dS=\mathrm{id}-\varepsilon, \label{def:Sd}
\end{equation} 
where $\varepsilon\co B_*\to k$ is the augmentation. In the first case 
\begin{align*}
dS(a[b_1|\cdots |b_s])&=a[b_1|\cdots|b_s]+ (-1)^{|c_{j_1,1}a_3|+1}xc_{j_2,2}[c_{j_1,1}a_3b_1|\cdots|b_s]\\
&+\sum_{t=1}^{s-1}(-1)^{n'_t}xc_{j_2,2}[c_{j_1,1}a_3|\cdots|b_tb_{t+1}|\cdots |b_s] )
\end{align*}
with $n'_t=|c_{j_1,1}a_3|+|b_1|+\cdots+|b_t|+t+1$, and 
\begin{align*}
Sd(a[b_1|\cdots |b_s])=S((-1)^{|a|}ab_1[b_2|\cdots|b_s]+\sum_{k=1}^{s-1}(-1)^{n_t}a[b_1|\cdots|b_tb_{t+1}|\cdots|b_s])
\end{align*}
with $n_t=|a|+|b_1|+\cdots+|b_t|+t$. 
Notice that $ab_1=xc_{j_2,2}c_{j_1,1}a_3b_1$ with $c_{j_1,1}a_3b_1\in A_i$ and $c_{j_2,2}\in A_j$ such that $1\leq i\not=j\leq 2$, we have  $c_{j_1,1}a_3b_1=\sum_l k_lc_{j_1,l}'a_{3,l}'$ with each summand $c_{j_1,l}'a_{3,l}'$ in the form \eqref{def:right}, 
where $k_l\in k$, $c_{j_1,l}'\in A_i$ and $a_{3,l}'\in A_3$. The 
linearity of $S$ implies that 
\[S(ab_1[b_2|\cdots|b_s])=(-1)^{|xc_{j_2,2}|}xc_{j_2,2}[c_{j_1,1}a_3b_1|b_2|\cdots|b_s].\] 
A comparison of $n_t-|xc_{j_2,2}|$ and $n'_t$ shows that \eqref{def:Sd} holds in this case. The verification of the second case is parallel, where we replace $\cdots[c_{j_1,1}a_3\cdots]$ by $\cdots c_{j_1,1}[a_3\cdots]$ and change the coefficients respectively.

For the last statement, consider the diagram  
\[
\begin{CD}
   A@>>> B @>>> \bar{B}\\
   @V=VV  @VVV      @VVV\\
   A@>>> B(A) @>>> \bar{B}(A)
\end{CD}
\]
where all vertical morphisms are inclusions, in which the first and second each induce an isomorphism on passage to homology, 
respectively, since $B$ is acyclic. By the Moore Comparison Theorem (see for example \cite{Zee57}), so does the third, as claimed.
\end{proof}

Let $K$ be a flag complex with $m$ vertices, and let $X_i=\bar{B}(A_i)\in\mathcal{S}V_k^c$ with 
$A_i\in\mathcal{H}^c_k$, $i=1,\ldots,m$. Recall that in a cartesian product 
$X\times X'$, $X,X'\in \mathcal{S}V_k^c$ 
the linear space of $n$-simplices is given by the tensor product
$(X\times X')_n=X_n\otimes X_n'$ with the face maps acting diagonally, namely 
$d_j(x,x')=(d_jx,d_jx')$, $j=0,\ldots,n$, for $(x,x')\in (X\times X')_n$.
The polyhedral product $\underline{\bar{B}(A)}^K\in\mathcal{S}V_k^c$ 
is the colimit of the diagram $F\co \mathcal{K}\to \mathcal{S}V_k^c$, with 
\[F(\sigma)=\prod_{i\in\sigma}X_i,\]
in which $F(\emptyset)=k$ is the initial object in $\mathcal{S}V_k^c$. 
More explicitly, up to isomorphism we identify 
$\underline{\bar{B}(A)}^K\subset \prod_{i=1}^m X_i$ as a subspace spanned by 
$(x_i)_{i=1}^m\in \prod_{i=1}^m X_i$, such that the collection of $i$, with $x_i$ not a vector from $k=\bar{B}_0(A_i)$  by degeneracy maps, spans a simplex of $K$.  
\begin{thm}\label{thm:isophi}
Let $K$ be a flag complex with $m$ vertices and $A_i\in\mathcal{H}^c_k$, $i=1,\ldots,m$. The morphism $\phi\co\underline{\bar{B}(A)}^K\to \bar{B}(\underline{A}^K)$ in $\mathcal{S}V^c_k$ that sends $(x_i)_{i=1}^m$, 
$x_i=[a_i^1|\cdots|a_i^s]$, to 
\[(-1)^{\mathrm{Shuf}(x_1,\ldots, x_m)}[a_1^1a_2^1\cdots a_m^1|\cdots|a_1^sa_2^s\cdots a_m^s]\]
where $\mathrm{Shuf}(x_1,\ldots, x_m)$ is the shuffle number obtained by changing graded vector
\begin{equation}
a^1_1\otimes\cdots\otimes a^s_1\otimes a^1_2\otimes\cdots\otimes a^s_2\otimes\cdots\otimes a^1_m\otimes\cdots\otimes a^s_m \label{def:ori}
\end{equation}
into
\begin{equation}
a^1_1\otimes\cdots\otimes a^1_m\otimes a^2_1\otimes\cdots\otimes a^2_m\otimes\cdots\otimes a^s_1\otimes\cdots\otimes a^s_m, \label{def:aft}
\end{equation}
is well-defined and induces a weak homotopy equivalence.
\end{thm}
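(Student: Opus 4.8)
The plan is to prove Theorem~\ref{thm:isophi} in close parallel with Theorem~\ref{thm:weakequi}, using the bar construction $\bar{B}$ in place of the classifying space $\overline{W}$ and the Hopf-algebra acyclic theorem (Theorem~\ref{thm:Whitehead2}) in place of Theorem~\ref{thm:Whitehead}. The statement breaks into two parts: first, that $\phi$ is a well-defined morphism in $\mathcal{S}V^c_k$, meaning it lands in $\bar{B}(\underline{A}^K)$ and commutes with every face and degeneracy operator; second, that $\phi$ induces a weak homotopy equivalence.

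For well-definedness I would first record that for a simplex $(x_i)_{i=1}^m$ of $\underline{\bar{B}(A)}^K$ the indices $i$ with $x_i$ nondegenerate span a simplex of $K$, so by Theorem~\ref{thm:Hopfmain} the corresponding subalgebras $A_i$ commute inside $\underline{A}^K$; hence each level product $a_1^\ell a_2^\ell\cdots a_m^\ell$ is a well-defined homogeneous element of $\underline{A}^K$ of the correct degree. Compatibility with the degeneracies $s_i$ and with the outer faces $d_0,d_s$ is immediate from the formulas. The crux is the inner faces $d_j$, $1\le j\le s-1$: on the source they multiply $a_i^j a_i^{j+1}$ within each factor before forming level products, whereas on the target they multiply the two level products; the two outcomes agree because the commutation relations give $\prod_{i}(a_i^j a_i^{j+1})=\big(\prod_i a_i^j\big)\big(\prod_i a_i^{j+1}\big)$, the exact counterpart of the identity verified in the proof of Theorem~\ref{thm:weakequi}.

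To show $\phi$ is a weak equivalence I would induct on the number $m$ of vertices of $K$, as in Theorem~\ref{thm:weakequi}. When $K=\Delta$ is the full simplex, $\underline{A}^\Delta=\bigotimes_{i=1}^m A_i$ (factors from distinct $A_i$ commuting) and $\underline{\bar{B}(A)}^\Delta=\prod_{i=1}^m\bar{B}(A_i)$; at each simplicial level $\phi$ is simply the block transposition of tensor factors carrying the Koszul sign $(-1)^{\mathrm{Shuf}(x_1,\ldots,x_m)}$, hence a linear isomorphism, and compatibility with faces again uses that distinct tensor factors commute in $\bigotimes_i A_i$, so $\phi$ is an isomorphism of simplicial vector spaces. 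For the inductive step, Lemma~\ref{lem:L11} writes $K=K_1\cup_{K_3}K_2$ as a union of proper flag subcomplexes along the flag subcomplex $K_3=K_1\cap K_2$. Both sides then split as pushouts, $\underline{\bar{B}(A)}^K=\underline{\bar{B}(A)}^{K_1}\coprod_{\underline{\bar{B}(A)}^{K_3}}\underline{\bar{B}(A)}^{K_2}$ in $\mathcal{S}V^c_k$ and $\underline{A}^K=\underline{A}^{K_1}\coprod_{\underline{A}^{K_3}}\underline{A}^{K_2}$ in $\mathcal{H}^c_k$, the latter with monomorphic structure maps by Theorem~\ref{thm:Hopfmain}. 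Because the defining formula for $\phi$ depends only on the nondegenerate entries, $\phi$ factors as $i_{\bar{B}}\circ(\phi_1\coprod_{\phi_3}\phi_2)$, where $\phi_1\coprod_{\phi_3}\phi_2$ is a weak equivalence by the induction hypothesis and the Gluing Lemma (as used in the proof of Theorem~\ref{thm:weakequi}), and $i_{\bar{B}}$ is a homology isomorphism by Theorem~\ref{thm:Whitehead2}; composing finishes the induction.

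The main obstacle is the sign bookkeeping in the well-definedness step: one must track precisely how the shuffle number $\mathrm{Shuf}$ transforms under the inner faces and compare it against the Koszul exponents $n_j$ appearing in $\bar{d}=\sum_i(-1)^i d_i$, checking that the two cancel so that $\phi d_j=d_j\phi$ on the nose. A secondary but necessary point, used in the inductive step, is to confirm that $\phi$ restricts to $\phi_i$ on each subcomplex piece $\underline{\bar{B}(A)}^{K_i}$, which is what allows the Gluing Lemma to be applied; this follows from the locality of the formula and the matching of commutativity relations across the inclusions $K_i\subset K$.
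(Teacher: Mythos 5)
Your proposal is correct and follows essentially the same route as the paper: the same check that $\phi$ is simplicial (with the inner faces $d_j$ reduced to the commutation identity $\prod_i(a_i^ja_i^{j+1})=(\prod_ia_i^j)(\prod_ia_i^{j+1})$ and the shuffle-sign comparison you flag as the main bookkeeping point, which is exactly the identity $\mathrm{Shuf}(d_0(x_i))=\mathrm{Shuf}(\bm{a}^1,\bm{a}^2)+\mathrm{Shuf}((x_i))\bmod 2$ the paper verifies), the same base case $K=\Delta$, and the same induction via Lemma \ref{lem:L11}, the Gluing Lemma, Theorem \ref{thm:Whitehead2}, and the colimit identification of Lemma \ref{lem:decA}. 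No gaps of substance.
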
  
\begin{proof}
First we check that $\phi$ preserves the face and degeneracy maps. Suppose $(x_i)_{i=1}^m\in \bar{B}_s(A_i)$. 
By definition 
$d_0(x_i)_{i=1}^m=(d_0x_i)_{i=1}^m$, which is a new vector obtained via replacing $a^1_i$ by $\varepsilon(a^1_i)$ in \eqref{def:ori}, $i=1,\ldots,m$. Since $\varepsilon$ is multiplicative and keeps the degrees of elements, $\mathrm{Shuf}(x_1,\ldots, x_m)$ remains the same after we change the form of the new vector from \eqref{def:ori} to \eqref{def:aft}, whence $\phi d_0=d_0\phi$. Similarly $\phi d_s=d_s\phi$ and $\phi s_j=s_j\phi$, $j=0,\ldots,s$. Next we check that
\[d_1(x_i)_{i=1}^m=(-1)^{\sum_{t}|a^t_1|}a^1_1a^2_1\otimes\cdots \otimes a^s_1\otimes a^1_2a^2_2\otimes\cdots \otimes a^s_2\otimes\cdots \otimes a^1_ma^2_m\otimes\cdots\otimes a^s_m,\]
hence 
\[\phi(d_1(x_i)_{i=1}^m)=(-1)^{\mathrm{Shuf}(d_0(x_i)_{i=1}^m)} [a^1_1a^2_1a^1_2a^2_2\cdots a^1_ma^2_m|a_1^3a^3_2\cdots a^3_m|\cdots|a_1^sa_2^s\cdots a_m^s].\]
On the other hand, in $\underline{A}^K$ we have
\[
  a^1_1a^2_1a^1_2a^2_2\cdots a^1_ma^2_m=(-1)^{\mathrm{Shuf}(\bm{a}^1,\bm{a}^2)}a^1_1\cdots a^1_ma^2_1\cdots a^2_m
\]
where $\bm{a}^i=(a^i_t)_{t=1}^m$, $i=1,2$, since $\{i\mid \sum_{t}|a_t^i|>0\}$ is a simplex of $K$, by the construction of 
$\underline{\bar{B}(A)}^K$. We see that   $\phi d_1=d_1\phi$ follows from the equation
\[
  \mathrm{Shuf}(d_0(x_i)_{i=1}^m)=\mathrm{Shuf}(\bm{a}^1,\bm{a}^2)+\mathrm{Shuf}((x_i)_{i=1}^m) \quad \mathrm{mod} \ 2.
\]
The verification of $\phi d_j=d_j\phi$, $j=2,\ldots,s-1$, is similar.

Consider the case when $K=\Delta$ is a simplex 
 with $m$ vertices. Now $\underline{A}^K=\otimes_{i=1}^mA_i$, the 
         graded tensor product, and $\underline{\bar{B}(A)}^K=\prod_{i=1}^m\bar{B}(A_i)$.
Clearly $\phi\co\prod_{i=1}^m\bar{B}(A_i)\to \bar{B}(\otimes_{i=1}^mA_i)$ is an isomorphism 
of simplicial vector spaces. 
		
	In general we use an induction on the number $m$ of vertices of $K$, which is trivial when $m=1$. 
	Suppose $\phi$ is a weak homotopy equivalence for every simplicial complex whose number of vertices 
	is smaller than $n\geq 2$, and let $K$ be a flag complex with $n$ vertices.
	By Lemma \ref{lem:L11}, either $K$ is a simplex which is already proved as above, 
	or $K=K_1\coprod_{K_3} K_2$ is the union of two
	proper flag complexes $K_1, K_2$ along  their intersection $K_3$, which is also flag. 
	From the diagram
	\[\xymatrix{
	                                \underline{\bar{B}(A)}^{K_1}\coprod_{\underline{\bar{B}(A)}^{K_3}}\underline{\bar{B}(A)}^{K_2}   \ar[rd]  \ar[d]_{\phi_1\coprod_{\phi_3}\phi_2} & \\
	                                  \overline{B}(\underline{A}^{K_1})
	    \coprod_{\bar{B}(\underline{A}^{K_3})}\bar{B}(\underline{A}^{K_2}) \ar[r]^{i_{\bar{B}}}   &
	    \bar{B}(\underline{A}^{K_1}\coprod_{\underline{A}^{K_3}}\underline{A}^{K_2})
	}
	   	\]
		of simplicial inclusions with $\phi_i$ a weak homotopy equivalence 
		by induction hypothesis, $i=1,2,3$, we see that $\phi_1\coprod_{\phi_3}\phi_2$ is also a weak homotopy equivalence by the Gluing Lemma. 
		We finish the induction by Theorem \ref{thm:Whitehead2}, together with the isomorphism 
		$ \underline{A}^{K_1}\coprod_{\underline{A}^{K_3}}\underline{A}^{K_2}=\underline{A}^K$ by the uniqueness of
		colimits in $\mathcal{H}^c_k$ (see Lemma \ref{lem:decA}).  
\end{proof}
\subsection{Polyhedral products of graded chain complexes}
Let $\mathcal{C}_k^c$ be the category of connected chain complexes over $k$. 
For two objects $(C,d_c),(C',d_{C'})\in\mathcal{C}_k^c$, we have
$ (C\otimes C')_n=\oplus_{k=0}^n C_k\otimes C_{n-k}$
and 
\begin{equation}
	d_{C\otimes C' }(c\otimes c')=d_c(c)\otimes c'+(-1)^{|c|}c\otimes 
	d_{C'}(c')\label{def:tensord}
\end{equation} 
where $c\in C$ (resp. $c'\in C'$) is a homogeneous element of degree $|c|$ (resp. $|c'|$). 

Let $\prod_{i=1}^m X_i$ be a product of connected 
simplicial vector spaces. We have the corresponding simplicial chain complex  
$(\prod_{i=1}^mX_i,d)$, where
$d=\sum_{i}(-1)^{i-1}d_i$. 
The Alexander-Whitney map 
\[
f_{AW}\co (\prod_{i=1}^mX_i,d)\to \otimes_{i=1}^m(X_i,d)
\]
induces a chain homotopy equivalence. 
In particular, when $(X_i,d)=(\bar{B}(A_i),\bar{d})$, $i=1,\ldots,m$, 
consider the polyhedral tensor product 
\[\underline{(\bar{B}(A),\bar{d})}^K\subset\otimes_{i=1}^m(\bar{B}(A_i),\bar{d})\] 
spanned by elements of the form $\otimes_{i=1}^m\bar{c}_i^{s_i}$, $c_i^{s_i}\in \bar{B}_{s_i}(A_i)$, such 
that 
\[\{i\mid c_i^{s^i} \text{is not from } \bar{B}_0(A_i) \text{ by degeneracy maps}\}\in K\] 
as a simplex.
Once we identify the polyhedral product $\underline{X}^K\subset \prod_{i=1}^mX_i$ 
as a subcomplex, its image 
under $f_{AW}(\underline{X}^K)\subset \otimes_{i=1}^m(X_i,d)$ coincides with 
$\underline{(X,d)}^K$. It can be checked that when restricted to $\underline{X}^K$,
$f_{AW}$ induces a chain homotopy equivalence onto its image. 

Now we define an object $C=\oplus_{s\geq 0}C_s=
\underline{A}^K\otimes_{\tau}\underline{(\bar{B}(A),\bar{d})}^K$ in $\mathcal{C}_k^c$, 
where $C_s$ is spanned by 
\[
	a\otimes\otimes_{i=1}^m \bar{c}_i^{s_i}, 
\quad a\in\underline{A}^K, \quad \bar{c}_i^{s_i}=[a_i^1|\cdots|a_i^{s_i}]\in\bar{B}_{s_i}(A_i),\]
such that $\sum_is_i=s$. 
The differential $d_{\tau}$ satisfies
\begin{equation}
	d_{\tau}(a\otimes\otimes_{i=1}^m\bar{c}_i)=(-1)^{|a|}a\left[d'(\otimes_{i=1}^m \bar{c}_i)+
	\bar{d}(\otimes_{i=1}^m \bar{c}_i)\right],\label{def:dtau2}
\end{equation}
where \[d'(\otimes_{i=1}^m \bar{c}_i)=
	\sum_{i=1}^m (-1)^{n_i}a_i^{1}\bar{c}_1\otimes\cdots\otimes
	\bar{c}_{i-1}\otimes
[a_i^2|\cdots|a_i^{s_i}]\otimes \bar{c}_{i+1}\otimes\cdots\otimes \bar{c}_m, \]
$n_i=(\sum_{j<i}|\bar{c}_j|)(|a_i^1|+1)$, 
	and $\bar{d}=\bar{d}_{\otimes_{i=1}^m\bar{B}(A_i)}$ 
	is the tensor product of differentials under the rule 
	\eqref{def:tensord}. It can be checked directly that
$(\underline{A}^K\otimes_{\tau}\underline{(\bar{B}(A),\bar{d})}^K,d_{\tau})$ is a chain 
complex.\footnote{Here we follow the construction of $d_{\tau}$ in \cite{HMS74}.} 
	Moreover, with the induced 
	differential, $k\otimes_{\underline{A}^K}
	\underline{A}^K\otimes_{\tau}\underline{(\bar{B}(A),d)}^K$ coincides with 
	$\underline{(\bar{B}(A),\bar{d})}^K$.

	\begin{prop}\label{prop:acyclic}
	Let $K$ be a flag complex with $m$ vertices, and let 
	$\underline{A}=(A_i)_{i=1}^m$ be $m$ objects in $\mathcal{H}_k^c$.
	The chain complex $(\underline{A}^K\otimes_{\tau}\underline{(\bar{B}(A),
	\bar{d})}^K,d_{\tau})$ is a free resolution of $k$, as left $\underline{A}^K$
	modules.
\end{prop}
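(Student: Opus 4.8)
The plan is to establish freeness directly and then to deduce acyclicity by comparing $C=\underline{A}^K\otimes_{\tau}\underline{(\bar{B}(A),\bar{d})}^K$ with the standard left bar resolution $B(\underline{A}^K)=\underline{A}^K\otimes_{\tau}\bar{B}(\underline{A}^K)$, which is already a free resolution of $k$ by \eqref{def:BA}. Freeness of each $C_s$ over $\underline{A}^K$ is immediate, since $C_s=\underline{A}^K\otimes V_s$ with $V_s$ the bar-length-$s$ part of $\underline{(\bar{B}(A),\bar{d})}^K$, and the augmentation $C\to k$ is induced by $C_0=\underline{A}^K\to k$. It remains to show that $C$ is acyclic in positive homological degrees. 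The crucial point, recorded just before the statement, is that $k\otimes_{\underline{A}^K}C=\underline{(\bar{B}(A),\bar{d})}^K$: the reduction of $C$ modulo the augmentation ideal $J=\ker(\underline{A}^K\to k)$ is exactly the polyhedral tensor product on the bar level.

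First I would produce a quasi-isomorphism on the bar level. By Theorem \ref{thm:isophi} the map $\phi\co\underline{\bar{B}(A)}^K\to\bar{B}(\underline{A}^K)$ of simplicial vector spaces is a weak homotopy equivalence, hence induces a quasi-isomorphism $\phi_*$ of associated chain complexes. Composing with a chain-homotopy inverse $\nabla$ (built from shuffles) of the restricted Alexander--Whitney map $f_{AW}$, which exists because $f_{AW}$ is a chain-homotopy equivalence onto $\underline{(\bar{B}(A),\bar{d})}^K$, yields a direct quasi-isomorphism
\[
\Psi=\phi_*\circ\nabla\co\underline{(\bar{B}(A),\bar{d})}^K\stackrel{\simeq}{\longrightarrow}\bar{B}(\underline{A}^K).
\]
Here one must check that $\nabla$ carries the polyhedral subcomplex into the chains of the polyhedral subcomplex, so that $\Psi$ is genuinely defined on $\underline{(\bar{B}(A),\bar{d})}^K$.

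Next I would lift $\Psi$ to a morphism $\tilde{\Phi}\co C\to B(\underline{A}^K)$ of left $\underline{A}^K$-module chain complexes with $\tilde{\Phi}\otimes_{\underline{A}^K}k=\Psi$. Writing $C=\underline{A}^K\otimes V$ and $B(\underline{A}^K)=\underline{A}^K\otimes W$ as free modules on $V=\underline{(\bar{B}(A),\bar{d})}^K$ and $W=\bar{B}(\underline{A}^K)$, the map $\tilde{\Phi}$ is $\underline{A}^K$-linear and so determined by its restriction to $V$, whose leading term is prescribed to be $1\otimes\Psi$; the chain-map condition holds modulo $J$ precisely because $\Psi$ is a chain map, and the higher-order terms along the $J$-adic filtration are built by a standard obstruction argument, each obstruction being a positive-degree cycle in the acyclic complex $B(\underline{A}^K)$ and hence a boundary. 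I would then invoke the detection principle that a morphism of bounded-below, connected complexes of free $\underline{A}^K$-modules which becomes a quasi-isomorphism after $-\otimes_{\underline{A}^K}k$ is itself a quasi-isomorphism; this is a minimal-model (graded Nakayama) argument, since on passing to a minimal complex the reduced differential vanishes, so acyclicity of the reduction forces the minimal complex, and hence the mapping cone of $\tilde{\Phi}$, to be contractible. As the reduction of $\tilde{\Phi}$ is the quasi-isomorphism $\Psi$, it follows that $\tilde{\Phi}$ is a quasi-isomorphism, and since $B(\underline{A}^K)$ is acyclic by \eqref{def:BA}, so is $C$; thus $C$ is a free resolution of $k$.

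The main obstacle is this last detection step: one must verify that $C$ and $B(\underline{A}^K)$ are connected and bounded below, in both homological and internal degree, so that the minimal-model argument applies, as without such boundedness the reduction modulo $J$ need not detect acyclicity. An alternative, closer to the method of Theorem \ref{thm:isophi}, is to induct on the number of vertices via the splitting $K=K_1\cup_{K_3}K_2$ of Lemma \ref{lem:L11}: the base case $K=\Delta$ identifies $C$ with the tensor product $\bigotimes_{i=1}^m B(A_i)$ of individual bar resolutions, which resolves $k$ over $\otimes_{i=1}^mA_i$ by K\"{u}nneth, while the inductive step uses Theorem \ref{thm:Whitehead2} to exhibit $\underline{A}^K\otimes_{\tau}\bigl(\bar{B}(\underline{A}^{K_1})\coprod_{\bar{B}(\underline{A}^{K_3})}\bar{B}(\underline{A}^{K_2})\bigr)$ as a resolution and then compares it with $C$; this route, however, again reduces to a detection argument of the same kind and is no simpler.
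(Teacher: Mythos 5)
Your overall architecture---compare $C=\underline{A}^K\otimes_{\tau}\underline{(\bar{B}(A),\bar{d})}^K$ with the acyclic complex $B(\underline{A}^K)$ by means of $\phi$ and the Alexander--Whitney map, then transfer acyclicity by a Moore-type comparison---is the paper's, and your first and last steps (freeness of each $C_s$; the detection principle for maps of bounded-below complexes of free modules over the connected algebra $\underline{A}^K$) are sound. The gap is in the middle step, the construction of an $\underline{A}^K$-linear chain map $\tilde{\Phi}\co C\to B(\underline{A}^K)$ with $k\otimes_{\underline{A}^K}\tilde{\Phi}=\Psi$. You assert that the successive corrections can be killed because the obstructions are positive-degree cycles in the acyclic complex $B(\underline{A}^K)$; but if the reduction of $\tilde{\Phi}$ is to remain equal to $\Psi$, every correction must be chosen inside the subcomplex $J\otimes_{\tau}\bar{B}(\underline{A}^K)$, where $J=\ker(\underline{A}^K\to k)$, and that subcomplex is \emph{not} acyclic: the short exact sequence $0\to J\otimes_{\tau}\bar{B}(\underline{A}^K)\to B(\underline{A}^K)\to\bar{B}(\underline{A}^K)\to 0$ gives $H_{s-1}(J\otimes_{\tau}\bar{B}(\underline{A}^K))\cong Tor^{\underline{A}^K}_{s,*}(k,k)$ for $s\geq 2$, which is nonzero in general. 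If instead you allow arbitrary corrections in $B(\underline{A}^K)$, the usual comparison theorem does produce some lift of $\mathrm{id}_k$, but you lose control of its reduction, which need no longer be $\Psi$ nor, a priori, a quasi-isomorphism; and an abstract isomorphism $H_*(k\otimes_{\underline{A}^K}C)\cong Tor^{\underline{A}^K}_{*,*}(k,k)$ of graded vector spaces does not by itself force $C$ to be a resolution.

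The paper sidesteps the lifting problem: it interposes the twisted tensor product $\underline{A}^K\otimes_{\tau}\underline{\bar{B}(A)}^K$ built on the simplicial polyhedral product, out of which both $\mathrm{id}\otimes f_{AW}$ (into $C$) and $\mathrm{id}\otimes\phi$ (into $B(\underline{A}^K)$) are already explicitly defined, $\underline{A}^K$-linear, filtration-preserving chain maps inducing isomorphisms on the fibers; the Moore comparison theorem is then applied to each of these two maps separately (their direction is immaterial), and acyclicity of $C$ follows from that of $B(\underline{A}^K)$. You can repair your argument either by adopting this two-sided comparison, or by verifying directly that $\mathrm{id}\otimes\nabla$ (with $\nabla$ the shuffle map) is a chain map for the twisted differentials---but that verification is exactly what the obstruction argument was meant to replace, and it does not come for free.
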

\begin{proof}
	It suffices to show that the chain complex
	$(\underline{A}^K\otimes_{\tau}\underline{(\bar{B}(A),
	\bar{d})}^K,d_{\tau})$ is acyclic.
	By definition $B(\underline{A}^K)=\underline{A}^K\otimes_{\tau}\bar{B}(\underline{A}^K)$
	is acyclic. After checking the differentials, 
	we have a commutative diagram
	\[\begin{CD}
	\underline{A}^K@>>>
	\underline{A}^K\otimes_{\tau}\underline{(\bar{B}(A),\bar{d})}^K
		@>>>\underline{(\bar{B}(A),\bar{d})}^K \\
	@A\mathrm{id}_{\underline{A}^K}A=A   @A\mathrm{id}_{\underline{A}^K}\otimes f_{AW}AA 
	@Af_{AW}AA\\
	\underline{A}^K@>>>\underline{A}^K\otimes_{\tau}\underline{\bar{B}(A)}^K
	@>>>\underline{\bar{B}(A)}^K\\
	@V\mathrm{id}_{\underline{A}^K}V=V @V\mathrm{id}_{\underline{A}^K}\otimes\phi VV @V\phi VV\\
	\underline{A}^K@>>> \underline{A}^K\otimes_{\tau}\bar{B}(\underline{A}^K)@>>>
	\bar{B}(\underline{A}^K),
	\end{CD}
\]
in which $\phi$ induces a chain homotopy equivalence (see Theorem \ref{thm:isophi}), so does 
the Alexander-Whitney map $f_{AW}$. We see that the conclusion follows from the Moore Comparison
Theorem.
\end{proof}
Let $\underline{(B(A),d)}^K\subset\otimes_{i=1}^m(B(A_i),d)$ be the polyhedral product with respect 
to $K$ and bar resolutions $(B(A_i),d)=A_i\otimes_{\tau}(\bar{B}(A_i),\bar{d})$ of $A_i$, 
$i=1,\ldots,m$. By definition, $\underline{(B(A),d)}^K$ is spanned by 
\begin{equation}
	\otimes_{i=1}^m a_i^0\bar{c}_i, \quad a_i^*\in A_i,\ \bar{c}_i\in \bar{B}_{s_i}(A_i),\label{def:BAK}
\end{equation}
such that $\{i\mid s_i>0\}\in K$.
\begin{prop}\label{prop:poly}
	Let $\underline{A}^K\to\underline{A}^{\Delta}$ be the epimorphism induced from the inclusion
	$K\to \Delta$ of $K$ in the simplex with the same vertices, where 
	$\underline{A}=(A_i)_{i=1}^m$ are $m$ objects in $\mathcal{H}_k^c$.
The chain complex 
\[\underline{A}^{\Delta}\otimes_{\underline{A}^K}\underline{A}^K\otimes_{\tau}
	\underline{(\bar{B}(A),\bar{d})}^K
\cong \underline{A}^{\Delta}\otimes_{\tau} \underline{(\bar{B}(A),\bar{d})}^K \] 
is isomorphic to the polyhedral product
	$\underline{(B(A),d)}^K$.
\end{prop}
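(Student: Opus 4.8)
The plan is to split the claimed isomorphism into its two stages: the left-hand isomorphism is a routine change-of-rings, while the identification with the polyhedral product $\underline{(B(A),d)}^K$ carries the real content. For the first stage, recall that $\underline{A}^K\otimes_{\tau}\underline{(\bar{B}(A),\bar{d})}^K$ is the free resolution of Proposition \ref{prop:acyclic}, whose twisting differential $d_{\tau}$ of \eqref{def:dtau2} acts through left multiplication by elements of $\underline{A}^K$ in the summand $d'$. Tensoring with $\underline{A}^{\Delta}$ over $\underline{A}^K$ collapses $\underline{A}^{\Delta}\otimes_{\underline{A}^K}\underline{A}^K\cong\underline{A}^{\Delta}$, and since the epimorphism $\underline{A}^K\to\underline{A}^{\Delta}$ is multiplicative, the twisting descends verbatim; this yields the chain isomorphism $\underline{A}^{\Delta}\otimes_{\underline{A}^K}\underline{A}^K\otimes_{\tau}\underline{(\bar{B}(A),\bar{d})}^K\cong\underline{A}^{\Delta}\otimes_{\tau}\underline{(\bar{B}(A),\bar{d})}^K$, which I would record in a line or two.

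For the second stage I would exhibit an $\underline{A}^{\Delta}$-linear shuffle map $\Phi$. Both complexes are free left $\underline{A}^{\Delta}=\otimes_{i=1}^m A_i$-modules on the same generating set, a homogeneous basis of $\underline{(\bar{B}(A),\bar{d})}^K$: on the left the front coefficient lies in $\underline{A}^{\Delta}$, while on the right, by the description \eqref{def:BAK}, an element $\otimes_{i=1}^m a_i^0\bar{c}_i$ is freely generated over $\otimes_i A_i$ by its bar part $\otimes_i\bar{c}_i$, with $s_i=0$ exactly off the support, so that the polyhedral condition $\{i\mid s_i>0\}\in K$ agrees on both sides. I would define $\Phi$ by sending the generator $(\otimes_i a_i^0)\otimes(\otimes_i\bar{c}_i)$ to $(-1)^{\epsilon}\otimes_{i=1}^m(a_i^0\bar{c}_i)$, where $\epsilon=\sum_i|\bar{c}_i|\sum_{j>i}|a_j^0|$ is the Koszul sign produced by interleaving the coefficient block $a_1^0\otimes\cdots\otimes a_m^0$ with the bar block $\bar{c}_1\otimes\cdots\otimes\bar{c}_m$. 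As this is a degree-preserving bijection of $\underline{A}^{\Delta}$-bases respecting the support condition, $\Phi$ is an isomorphism of graded left $\underline{A}^{\Delta}$-modules, and it remains only to verify that it is a chain map.

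For that verification I would split the differential on $\underline{(B(A),d)}^K$, which by the rule \eqref{def:tensord} is the graded tensor product $\sum_i\pm\,\mathrm{id}\otimes\cdots\otimes d_i\otimes\cdots\otimes\mathrm{id}$ of the bar-resolution differentials $d_i$ of $B(A_i)=A_i\otimes_{\tau}\bar{B}(A_i)$ (the differential of \eqref{def:BA}), into its internal bar part and its twisting part, and match these separately against the two summands of $d_{\tau}$ in \eqref{def:dtau2}. The internal part $\bar{d}$ of $d_{\tau}$ is literally the tensor product of the reduced bar differentials, so it corresponds under $\Phi$ to the sum of the internal parts of the $d_i$. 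The decisive point is the twisting part $d'$: after base change to $\underline{A}^{\Delta}=\otimes_i A_i$, multiplying the pulled-out leading letter $a_i^1$ into the coefficient is a slot-local operation, affecting only the $i$-th tensor factor, whereas in the graph product $\underline{A}^K$ this multiplication need not respect the slots, and it is precisely the passage to $\underline{A}^{\Delta}$ that localizes it. Hence the $i$-th summand of $d'$ maps under $\Phi$ exactly to the twisting term $a_i^0a_i^1[a_i^2|\cdots|a_i^{s_i}]$ of $d_i$ acting on the $i$-th factor $a_i^0\bar{c}_i$, lifted to the tensor product by the Koszul rule \eqref{def:tensord}.

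I expect the main obstacle to be the sign reconciliation: one must check that the shuffle sign $\epsilon$, the prefactor $(-1)^{|a|}$ of \eqref{def:dtau2}, the internal sign $n_i=(\sum_{j<i}|\bar{c}_j|)(|a_i^1|+1)$ of $d'$, and the Koszul signs generated by \eqref{def:tensord} when $d_i$ acts in the $i$-th slot all combine to the same total sign on both sides. This is routine but delicate bookkeeping; the conceptual heart is simply the slot-localization of multiplication accompanying the base change $\underline{A}^K\to\underline{A}^{\Delta}$. No further subtlety arises from the polyhedral constraint, since both differentials only decrease the integers $s_i$ and hence preserve the condition $\{i\mid s_i>0\}\in K$, so $\Phi$ and the differentials are compatible with the restriction to the polyhedral subcomplexes.
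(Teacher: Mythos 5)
Your proposal is correct and follows essentially the same route as the paper: the paper's proof defines exactly your map $\Phi$ (there called $f$, with the same Koszul shuffle sign $\mathrm{Shuf}(\bm{a}_0,\bm{\bar{c}})=\sum_i|\bar{c}_i|\sum_{j>i}|a_j^0|$) and then verifies the chain-map property using graded commutativity $a_i^*a_j^*=(-1)^{|a_i^*||a_j^*|}a_j^*a_i^*$ in $\underline{A}^{\Delta}$, which is precisely your ``slot-localization'' observation. Your write-up merely spells out the differential matching and sign bookkeeping that the paper leaves as a straightforward check.
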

\begin{proof}
	We define a linear map 
            $f\co \underline{A}^{\Delta}\otimes_{\tau} \underline{(\bar{B}(A),\bar{d})}^K 
            \to\underline{(B(A),d)}^K$
	such that 
	\[f(a_1^0\cdots a_m^0\otimes \otimes_{i=1}^m\bar{c}_i)=
		(-1)^{\mathrm{Shuf}(\bm{a}_0,\bm{c} )}
		\otimes_{i=1}^ma_i^0\bar{c}_i,\quad
		\bar{c}_i=[a_i^1|\cdots|a_i^{s_i}], 
	\]
	where $\mathrm{Shuf}(\bm{a}_0,\bm{\bar{c}})$ is the sign obtained by 
	changing the graded vector
	\[a_1^0\otimes\cdots\otimes a_m^0\otimes \bar{c}_1\otimes\cdots\cdots \bar{c}_m\]	
		into $\otimes_{i=1}^ma_i^0\otimes \bar{c}_i$, with $|a_i^0|$ and 
		$|c_i|=\sum_{j=1}^{s_i}|a_i^j|$ the degrees in $A_i$, respectively. 
		It is straightforward to check $f$ is a chain map 
		(see \eqref{def:dtau2},\eqref{def:BA}), using the fact
		$a_i^*a_j^*=(-1)^{|a_i^*||a_j^*|}a_j^*a_i^*$ in $\underline{A}^{\Delta}$,
		whenever $i\not=j$.
\end{proof}

\subsection{The BBCG spectral sequence of \cite{BBCG17}} 
To determine the homology of of the chain complex $\underline{C}^K=
\underline{(B(A),d)}^K$ in Proposition \ref{prop:poly}, we use the spectral sequence
defined in \cite{BBCG17} which is induced by ordering all simplices of $K$ lexicographically. 
First we define $\emptyset$ as the minimum, and for two simplices $\sigma=\{i_1,\ldots,i_s\}$, $\sigma'=\{i'_1,\ldots,i'_{s'}\}$, 
whose indices are written in an increasing order, we define $\sigma'<\sigma$ 
if either $s'<s$ or $s=s'$ and
there exists $1\leq j\leq s$ such that $i'_{j}<i_{j}$ and $i'_t=i_t$ for all $t<j$, if any. In this
way we obtain a filtration
\begin{equation}
	\underline{C}^{F_{\emptyset}K}\subset \underline{C}^{F_{\{1\}}K}\subset\cdots
	\subset\underline{C}^{F_{\sigma}K}\subset\cdots\subset
 \underline{C}^{K} \label{def:fil}
 \end{equation} of chain complexes with $\underline{C}^{F_{\sigma}K}$ spanned by elements 
 of the form \eqref{def:BAK} such that 
 $\{i\mid s_i>0\}\in F_{\sigma}K=\{\sigma'\in K\mid\sigma'\leq\sigma\}$. 
 In the associated spectral sequence we have  
\begin{equation}
E_1^{*,*}=\bigoplus_{\sigma\in K} E_1^{\sigma,*}, \quad
E_1^{\sigma,*}=H_{*}(\underline{C}^{F_{\sigma}K},\underline{C}^{F_{\sigma'}K})\label{E1}
\end{equation}  
 in which $\sigma'$ is the largest simplex smaller than $\sigma$. 
 By definition $F_{\sigma'}K$ contains the subcomplex 
 $\partial \sigma$ as the union of all proper faces of $\sigma$, 
 hence the inclusion
 \begin{equation}
	 (\underline{C}^{F_{\sigma}K}, \underline{C}^{F_{\partial \sigma}K}) \to 
	 (\underline{C}^{F_{\sigma}K},\underline{C}^{F_{\sigma'}K}) \label{iso:ck}
 \end{equation}
 is an isomorphism of chain complexes, where $\underline{C}^{F_{\partial \sigma}K}$ is spanned by 
 elements of the form \eqref{def:BAK} such that $\{i\mid s_i>0\}\in \partial\sigma$. 
  Under the identification $ A_i=A_i\otimes \bar{B}_0(A_i)$ with trivial differentials, 
  we have the isomorphism 
  \[(\underline{C}^{\sigma}, \underline{C}^{\partial \sigma})=\otimes_{i\in\sigma} (B(A_i),A_i)
  \otimes\otimes_{j\not\in\sigma}A_j \]
  of chain complexes.
  The long exact sequence
  \[
   \cdots\rightarrow H_n(A_i)\rightarrow 
   H_n(B(A_i))\rightarrow H_n(B(A_i),A_i)\rightarrow H_{n-1}(A_i)\rightarrow
   \cdots
  \]
together with the acyclicity of $B(A_i)$ imply the isomorphism 
$H_{*+1}(C_i,A_i)\cong \wt{H}_{*}(A_i)=J(A_i)$, with 
$J(A_i)=\mathrm{coker}(k\to A_i)$ the coaugmentation ideal, which we denote by
 $H_{*}(C_i,A_i)\cong  sJ(A_i)$. The conclusion below follows from K\"{u}nneth formula: 
\begin{prop}\label{prop:E1}
We have an isomorphism of graded vector spaces
\[
	E_1^{\sigma,*}(\underline{C}^K)\cong \otimes_{i=1}^mH_*(Y_i), \quad H_*(Y_i)= \begin{cases}
	    H_{*}(B(A_i),A_i)\cong  sJ(A_i) & i\in\sigma\\
    A_i & i\not\in \sigma.
    \end{cases}
\]
\end{prop}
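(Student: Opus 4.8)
The plan is to combine the reductions already carried out before the statement with a single application of the Künneth theorem over the field $k$. Starting from the definition \eqref{E1}, I would first rewrite $E_1^{\sigma,*}=H_*(\underline{C}^{F_{\sigma}K},\underline{C}^{F_{\sigma'}K})$ using the chain-level isomorphism \eqref{iso:ck}, replacing the pair $(\underline{C}^{F_{\sigma}K},\underline{C}^{F_{\sigma'}K})$ by $(\underline{C}^{F_{\sigma}K},\underline{C}^{F_{\partial\sigma}K})$. Passing to this subquotient is what isolates the stratum indexed by $\sigma$: a generator $\otimes_{i=1}^m a_i^0\bar{c}_i$ of $\underline{C}^K$ is killed as soon as its support $\{i\mid s_i>0\}$ is a proper face of $\sigma$, so only the generators supported exactly on $\sigma$ survive.

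Next I would invoke the identification of this relative complex with a tensor product, namely
\[(\underline{C}^{\sigma},\underline{C}^{\partial\sigma})=\otimes_{i\in\sigma}(B(A_i),A_i)\otimes\otimes_{j\notin\sigma}A_j,\]
recorded above. The content is that each factor with $i\in\sigma$ must carry positive bar-length and therefore contributes the relative bar complex $(B(A_i),A_i)$, while each factor with $j\notin\sigma$ sits in bar-degree $0$ and contributes $A_j$ with trivial differential. Granting that the twisted differential $d_{\tau}$ of \eqref{def:dtau2} restricts on the $\sigma$-stratum to the tensor-product differential of these factors, this is an isomorphism of chain complexes.

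The computation is then immediate. Since $k$ is a field the Künneth theorem has no Tor terms, so
\[E_1^{\sigma,*}\cong\bigotimes_{i\in\sigma}H_*(B(A_i),A_i)\otimes\bigotimes_{j\notin\sigma}H_*(A_j).\]
I would finish by substituting the homologies already computed above: the long exact sequence of the pair together with the acyclicity of $B(A_i)$ gives $H_*(B(A_i),A_i)\cong sJ(A_i)$ for $i\in\sigma$, while $A_j$ has trivial differential so that $H_*(A_j)=A_j$ for $j\notin\sigma$. Reassembling these two cases into a single product over all $i=1,\ldots,m$ yields exactly $E_1^{\sigma,*}\cong\otimes_{i=1}^mH_*(Y_i)$ with $H_*(Y_i)$ as stated.

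Almost everything here is forced once the preceding identifications are in place, so the Künneth step is routine. The only point that genuinely needs checking is the chain-level matching of differentials in the tensor identification of the $\sigma$-stratum, namely that $d_{\tau}$ does not mix the factors beyond what the tensor-product differential allows, so that Künneth applies factorwise. Once that is confirmed, the graded-vector-space isomorphism follows formally.
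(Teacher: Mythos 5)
Your proposal is correct and follows essentially the same route as the paper: reduce via \eqref{iso:ck} to the pair $(\underline{C}^{F_{\sigma}K},\underline{C}^{F_{\partial\sigma}K})$, identify the resulting quotient with $\otimes_{i\in\sigma}(B(A_i),A_i)\otimes\otimes_{j\notin\sigma}A_j$, compute each factor by the long exact sequence and the acyclicity of $B(A_i)$, and conclude by the K\"unneth formula over the field $k$. The one point you flag as needing verification --- that the differential on the $\sigma$-stratum is the factorwise tensor-product differential --- is exactly the content of the displayed chain-level identification preceding the proposition, and holds because $\underline{(B(A),d)}^K$ carries the differential inherited from $\otimes_{i=1}^m(B(A_i),d)$.
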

\subsection{A Comparison of Spectral Sequences}
To deal with the higher differentials, we use the method of \cite{BBCG17} to compare 
$\underline{C}^K$ with the cellular chain complex of certain $CW$ complex. 
Now we suppose that as a connected $k$-algebra, each $A_i$ has a homogeneous basis $\mathcal{B}_{A_i}$ containing the identity $1_i$ in dimension $0$ (for our purpose it suffices to suppose each $\mathcal{B}_{A_i}$ is at most countable), $i=1,\ldots,m$. It follows that as a vector space,
$\underline{A}^{\Delta}=\otimes_{i=1}^mA_i$ has a basis consisting of graded monomials of the form 
 \begin{equation}
 \bm{b}=\otimes_{i=1}^mb_i, \quad b_i\in\mathcal{B}_{A_i}. \label{vec:a}
 \end{equation} 
 From the acyclicity of $B(A_i)$, 
 for each $b_i\in \mathcal{B}_{A_i}$ of positive degree we specify 
 $\hat{b}_i\in B_1(A_i)$ so that 
 \begin{equation}
	 d(\hat{b}_i)=(-1)^{|b_i|}b_i. \label{def:hatb}
\end{equation}
According to Proposition \ref{prop:E1}, $E_1^{\sigma,*}(\underline{C}^K)$ admits a basis, 
in which each element has a representative of the form
\begin{equation}
      \otimes_{i=1}^m c_i, \quad c_i=\begin{cases} \hat{b}_i & i\in\sigma\\
      b_i & i\not\in\sigma. \end{cases}
                                                                 \label{basis:E1}
 \end{equation}

Next we consider the minimal cellular chain complex of a disk and its bounding sphere. 
Let $R$ be the chain complex over $k$ spanned by three elements $u$, 
$t$ and $1$ respectively, where $|u|=1$ and $|t|=|1|=0$; 
the differential $d$ in $R$ follows the rule 
$du=t$, $dt=d1=0$. Let $J(R)\subset R$ (resp. $k\subset R$) be the subcomplex spanned by 
$t$ and $u$ (resp. spanned by $1$), so that $R=J(R)\oplus k$. For every natural number 
$n$ we define the suspension $s^nR$ of $R$ spanned by
 $s^nu$, $s^nt$ and $1$, with degrees $n+1$, $n$
and $0$, respectively, such that $ds^nu=(-1)^ns^nt$. 

Let $\bm{n}=(n_i)_{i=1}^m$ be an $m$-tuple of non-negative integers, and let 
\[ I_{\bm{n}}=\{i\mid n_i>0\} \]
be the support of $\bm{n}$.
We define the linear operator 
\[s^{\bm{n}}\co \underline{R}^{K_{I_{\bm{n}}}}\to\otimes_{i=1}^m(s^{n_i}R,d),\] 
where 
\begin{equation}
\underline{R}^{K_{I_{\bm{n}}}}=\otimes_{i=1}^mY_i, \quad 
Y_i=\begin{cases} J(R) & i\in I_{\bm{n}}\\ k  & \text{otherwise},
\end{cases}\label{def:RKI}
\end{equation}  
and 
\begin{equation}
      s^{\bm{n}}(\otimes_{i=1}^mr_i)=(-1)^{\mathrm{Shuf}(\bm{n},\bm{r})}\otimes_{i=1}^ms^{n_i}r_i. \label{def:sRK}
\end{equation}
$\mathrm{Shuf}(\bm{n},\bm{r})=\sum_{i=1}^m \sum_{j>i}n_j |r_i|$. 
Since $s^{n_i}1_i$ never appears when $n_i>0$, the operator $s^{\bm{n}}$ is well defined, sending 
$\underline{R}^{K_{I_{\bm{n}}}}$ ono-to-one onto its image 
$s^{\bm{n}}\underline{R}^{K_{I_{\bm{n}}}}\subset \otimes_{i=1}^m(s^{n_i}R,d)$. 
Moreover, a routine check of differential graded algebras shows that
\begin{equation}
   s^{\bm{n}}d=(-1)^{|\bm{n}|}d s^{\bm{n}}, \label{eq:dsd}
\end{equation}
where $|\bm{n}|=\sum_{i=1}^mn_i$.

For a given basis element $\bm{b}=\otimes_{i=1}^mb_i\in \underline{A}^{\Delta}$ 
of the form \eqref{vec:a}, 
let $\bm{n_b}=(|b_i|)_{i=1}^m$.  
Consider a linear map $f_{\bm{b}}\co s^{\bm{n_b}}R^{K_{I_{\bm{n_b}}}}\to\underline{C}^K$ such that 
\begin{equation}
      f_{\bm{b}}(\otimes_{i=1}^mr_i)=\otimes_{i=1}^m c_i, 
      \quad c_i=\begin{cases} \hat{b}_i & r_i=s^{|b_i|}u_i\\
                                             b_i     &   r_i=s^{|b_i|}t_i\\
                                             1_i      &  r_i=1_i.         
\end{cases}\label{def:fa}
\end{equation}
It can be easily checked that 
$f_{\bm{b}}$ is a chain map preserving the degrees on both sides.
The theorem below is essentially a special case of \cite{BBCG17}, in the language of homology.
\begin{thm}\label{thm:BBCG}
	Let $\mathcal{B}_{\underline{A}^{\Delta}}$ be a basis of 
	$\underline{A}^{\Delta}=\otimes_{i=1}^mA_i$ 
consisting of all monomials $\bm{b}=\otimes_{i=1}^mb_i$, $b_i$ running through a 
basis $\mathcal{B}_{A_i}$ of homogeneous elements of $A_i$. 
Then the chain map 
\[
	f\co \bigoplus_{\bm{b}\in \mathcal{B}_{\underline{A}^{\Delta}}}
	s^{\bm{n_b}}R^{K_{I_{\bm{n_b}}}}\to\underline{C}^K
\]
such that $f=f_{\bm{b}}$ on each direct summand induces an isomorphism in homology groups, 
preserving the degrees.  
\end{thm}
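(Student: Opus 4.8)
The plan is to deduce Theorem \ref{thm:BBCG} from a comparison of spectral sequences, using on both the source $\bigoplus_{\bm{b}}s^{\bm{n_b}}R^{K_{I_{\bm{n_b}}}}$ and the target $\underline{C}^K$ the same lexicographic filtration \eqref{def:fil} indexed by the simplices of $K$. Since $K$ has only finitely many simplices, both filtrations are finite, so both associated spectral sequences converge and a filtration-preserving chain map that is an isomorphism on $E_1$ is automatically an isomorphism on the abutments, by the Moore Comparison Theorem already invoked for Theorem \ref{thm:Whitehead2} and Proposition \ref{prop:acyclic}. Thus the whole proof reduces to: (i) checking that $f$ preserves the filtrations, and (ii) computing the two $E_1$-pages and showing that $f$ matches them.

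For (i) I would filter the source by declaring $F_\sigma$ to be the span of those cells whose \emph{$u$-support} $U=\{i\mid r_i=s^{|b_i|}u_i\}$ lies in $F_\sigma K=\{\sigma'\in K\mid\sigma'\le\sigma\}$. By \eqref{eq:dsd} the only part of the differential acting nontrivially on an $R$-factor sends $s^{n_i}u_i\mapsto(-1)^{n_i}s^{n_i}t_i$, i.e. it replaces a $u$ by a $t$ and therefore strictly shrinks $U$; hence each $F_\sigma$ is a subcomplex and the filtration is well defined. On the other hand, since $f_{\bm{b}}$ is a well-defined chain map into $\underline{C}^K$ (as noted just before the statement), formula \eqref{def:fa} shows that a cell with $u$-support $U$ is sent to a generator of $\underline{C}^K$ whose bar-support $\{i\mid s_i>0\}$ equals $U$: the factors $\hat b_i\in B_1(A_i)$ (coming from $r_i=s^{|b_i|}u_i$) contribute $s_i=1$, while the factors $b_i$ and $1_i$ contribute $s_i=0$. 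Consequently $f(F_\sigma)\subset\underline{C}^{F_\sigma K}$ and $f$ is filtration preserving, and in particular every $u$-support that occurs is a simplex of $K$.

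For (ii) I would first recall that, on the target, Proposition \ref{prop:E1} together with \eqref{E1} and the isomorphism \eqref{iso:ck} gives $E_1^{\sigma,*}(\underline{C}^K)\cong\otimes_{i\in\sigma}sJ(A_i)\otimes\otimes_{j\notin\sigma}A_j$, with distinguished basis \eqref{basis:E1} represented by $\otimes_{i\in\sigma}\hat b_i\otimes\otimes_{j\notin\sigma}b_j$, where $b_i$ runs over the positive-degree elements of $\mathcal{B}_{A_i}$ for $i\in\sigma$ and $b_j$ runs over all of $\mathcal{B}_{A_j}$ for $j\notin\sigma$. On the source, the computation in (i) shows that the induced differential $d_0$ vanishes on the associated graded, so $E_1^{\sigma,*}$ of the source is exactly the span of the cells whose $u$-support is precisely $\sigma$; in the summand indexed by $\bm{b}$ such a cell carries $s^{|b_i|}u_i$ for $i\in\sigma$, $s^{|b_j|}t_j$ for $j\in I_{\bm{b}}\setminus\sigma$, and $1_j$ for $j\notin I_{\bm{b}}$. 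By \eqref{def:fa} this cell is sent to $\otimes_{i\in\sigma}\hat b_i\otimes\otimes_{j\notin\sigma}b_j$ (with $b_j=1_j$ when $j\notin I_{\bm{b}}$), which is precisely the basis vector \eqref{basis:E1}; letting $\bm{b}$ range over all monomials, this is a bijection onto the distinguished basis of $E_1^{\sigma,*}(\underline{C}^K)$. Hence $f$ is an isomorphism on $E_1$, and the comparison theorem finishes the proof.

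The main obstacle I anticipate is the bookkeeping in steps (i)–(ii): one must verify that the filtration degree is governed \emph{solely} by the $u$-support, so that simultaneously $f$ is strictly filtration preserving and $d_0$ vanishes on the associated graded of the source, and one must track the suspension signs from \eqref{def:sRK}–\eqref{eq:dsd} carefully enough to be sure that the induced map on $E_1$ sends the chosen basis to the chosen basis rather than to some nonzero rescaling. Once these are in place the argument is a formal application of the spectral-sequence comparison used twice earlier in this section.
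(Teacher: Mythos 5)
Your proposal is correct and follows essentially the same route as the paper: the paper also filters both sides by the lexicographic order on simplices of $K$ (defining the filtration on the source as $f_{\bm{b}}^{-1}$ of the target filtration, which coincides with your $u$-support filtration), identifies $E_1^{\sigma,*}$ of the source with the classes $\otimes_i r_i$ supported on $\sigma\subset I_{\bm{b}}$, matches them under $E_1(f_{\bm{b}})$ with the basis \eqref{basis:E1} of $E_1^{\sigma,*}(\underline{C}^K)$ from Proposition \ref{prop:E1}, and concludes by comparison of spectral sequences. The only cosmetic difference is that the paper phrases the source-side $E_1$ computation via the relative pairs $(f_{\bm{b}}^{-1}(\underline{C}^{\sigma}),f_{\bm{b}}^{-1}(\underline{C}^{\partial\sigma}))=\otimes_i Y_i'$ rather than via the vanishing of $d_0$ on the associated graded, and your worry about sign rescalings is harmless since even a nonzero rescaling would not affect the isomorphism on $E_1$.
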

\begin{proof}
Clearly $f$ is an injective chain map. 
It suffices to show that $f$ induces a morphism of spectral sequences, which are isomorphisms 
on $E_1$ terms.
Consider the filtration 
\[
	f^{-1}_{\bm{b}}(\underline{C}^{F_{\emptyset}K})\subset 
 	f^{-1}_{\bm{b}}(\underline{C}^{F_{\{1\}}K})\subset\cdots\subset 
 	f^{-1}_{\bm{b}}(\underline{C}^{F_{\sigma}K})\subset\cdots\subset
 f^{-1}_{\bm{b}}(\underline{C}^{K})\]
on $s^{\bm{n_b}}R^{K_{I_{\bm{n_b}}}}$, which is induced by the filtration \eqref{def:fil} 
from the lexicographic order on simplices of $K$.
It follows that we have a morphism 
\[E_*(f_{\bm{b}})\co E_*^{*,*}( s^{\bm{n_b}}R^{K_{I_{\bm{n_b}}}})\to 
E_*^{*,*}(\underline{C}^{K})\] 
preserving all differentials. Observe that (see \eqref{def:RKI})
\[(f^{-1}_{\bm{b}}(\underline{C}^{\sigma}),f^{-1}_{\bm{b}}(\underline{C}^{\partial \sigma}))
	=\otimes_{i=1}^m Y_i',
\quad Y_i'= \begin{cases} (s^{|b_i|}J(R), k \langle s^{|b_i|}t_i\rangle)  & 
	i\in\sigma\cap I_{\bm{n_b}}\\
                     (k,k) & i\in\sigma\setminus I_{\bm{n_b}}\\
                   k \langle s^{|b_i|}t_i\rangle      & i\in I_{\bm{n_b}}\setminus \sigma\\
                   k  & \text{otherwise},
                   \end{cases}\]
where $k \langle s^{|b_i|}t_i\rangle\subset s^{|b_i|}J(R)$ is the subspace generated by 
$s^{|b_i|}t_i$.
 The isomorphism  \eqref{iso:ck} and Proposition \ref{prop:E1} hold for 
 $E_1^{\sigma, *}(s^{\bm{n_b}}R^{K_{I_{\bm{n_b}}}})$ as well, whence
    \[E_1^{\sigma, *}(s^{\bm{n_b}}R^{K_{I_{\bm{n_b}}}})\cong \otimes_{i=1}^mH_*(Y_i').
    \] 
    We see that $E_1^{\sigma, *}(s^{\bm{n_b}}R^{K_{I_{\bm{n_b}}}})$ is non-trivial only 
    when $\sigma\subset I_{\bm{b}}$.
    In this case 
    $H_*(Y_i')$ is spanned by the class represented by 
    $\otimes_{i=1}^m r_i$, where $r_i=s^{|b_i|}u_i, s^{|b_i|}t_i$ or $1_i$ according to
    $i\in \sigma$, $i\in I_{\bm{a}}\setminus\sigma$ or $i\not\in I_{\bm{a}}$, respectively, 
    whose image under $E_1^*(f_{\bm{b}})$     
    is the class represented by (see \eqref{def:fa})
    \[
         \otimes_{i=1}^m c_i, \quad c_i=\begin{cases} \hat{b}_i  & i\in\sigma\\
                   b_i      & i\in I_{\bm{b}}\setminus\sigma\\
                   1_i  & i\not\in I_{\bm{b}}.
                   \end{cases}
    \]
     Clearly $E_1^{\sigma,*}(f_{\bm{b}})$ is injective. 
     Moreover, a comparison with a basis element $\otimes_{i=1}^m c_i$ of 
     $E_1^{\sigma,*}(\underline{C}^K)$ (see \eqref{basis:E1}) shows that 
     as $\bm{b}$ runs through $\mathcal{B}_{\underline{A}^{\Delta}}$, the images
     of $E_1^{\sigma,*}(f_{\bm{b}})$, $\sigma\subset I_{\bm{b}}$, 
     give a basis of $E_1^{\sigma,*}(\underline{C}^K)$.  
     The proof is completed after this comparison of the spectral sequences, 
     as claimed.  
    \end{proof}

\subsection{The Homology}
Now we give an explicit formula for 
\[
	H_*(\bigoplus_{\bm{b}\in \mathcal{B}_{\underline{A}^{\Delta}}}
	s^{\bm{n_b}}R^{K_{I_{\bm{n_b}}}})\cong
	\bigoplus_{\bm{b}\in \mathcal{B}_{\underline{A}^{\Delta}}}
	H_*(s^{\bm{n_b}}R^{K_{I_{\bm{n_b}}}}).
\]
By \eqref{eq:dsd}, it suffices to understand $H_*(R^{K_{I_{\bm{n_b}}}})$.  
Let $K_{I}$ be the full subcomplex of $K$ with respect to $I\subset \{1,\ldots,m\}$, 
namely 
\[
     K_{I}=\{\sigma\in K\mid\sigma\subset I\}.
\]
Recall that the simplicial chain complex $C_*(K_{I})=\oplus_{n=0}^{\infty}C_n(K_{I})$ 
is freely generated (as a vector space over $k$) by simplices $\sigma\in K_{I}$ with 
$\dim\sigma=\mathrm{Card}(\sigma)-1$. 
The differential $d \co C_*(K_{I})\to C_*(K_{I})$ satisfies 
\begin{equation}
     d(\sigma)=\begin{cases}\sum_{i\in\sigma}(-1)^{\mathrm{Shuf}(i,\sigma\setminus\{i\})}\sigma\setminus\{i\}
     & \dim\sigma>0\\
     \emptyset & \dim\sigma=0,
     \end{cases}\label{def:dSK}
\end{equation}
where $\mathrm{Shuf}(i,\sigma\setminus\{i\})=\mathrm{Card}\{j\in\sigma\mid j<i\}$ and $\emptyset$ generates the chains 
of dimension $-1$.   
\begin{lem}
	The map $f_I\co C_*(K_{I})\to \underline{R}^{K_{I}}$ that sends $\sigma$ to 
	$u_{\sigma}t_{I\setminus\sigma}=\otimes_{i=1}^mr_i$, 
where $r_i=u_i, t_i$ or $1_i$ if $i\in\sigma, I\setminus\sigma$ or $i\not\in I$, respectively, 
induces an isomorphism of chain complexes. In particular, $f_I$ induces an isomorphism 
\[
           \wt{H}_*(K_I)=H_{*+1}(\underline{R}^{K_{I}})
\]
of homology groups, shifting the dimensions up by one.
\end{lem}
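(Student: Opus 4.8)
The plan is to prove the stronger assertion that $f_I$ is an isomorphism of chain complexes, and then read off the homology statement from the degree shift. First I would record bases on the two sides. The augmented simplicial chain complex $C_*(K_I)$ has as a basis the simplices $\sigma\in K_I$, with $\sigma$ sitting in homological dimension $\mathrm{Card}(\sigma)-1$ (so $\emptyset$ occupies dimension $-1$), while $\underline{R}^{K_I}$ has as a basis the monomials $u_\sigma t_{I\setminus\sigma}$ with $\sigma\in K_I$, each of degree $\mathrm{Card}(\sigma)$ since $|u_i|=1$ and $|t_i|=0$. Thus $f_I$ is manifestly a bijection of basis elements that raises degree by exactly one. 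The one structural point to record here is that $\underline{R}^{K_I}$ is a genuine subcomplex within $\otimes_{i\in I}J(R)$: by \eqref{def:tensord} the tensor differential of $u_\sigma t_{I\setminus\sigma}$ produces only terms indexed by the faces $\sigma\setminus\{j\}$, all of which lie in $K_I$ by the simplicial-complex axiom, so $f_I$ is well defined and lands in a $d$-closed span.

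It then remains to check that $f_I$ intertwines the differentials, i.e. $f_I d = d f_I$. Both sides are sums over the vertices $j\in\sigma$ of the term corresponding to the face $\sigma\setminus\{j\}$, so everything reduces to matching signs. On the simplicial side \eqref{def:dSK} attaches to the $j$-th face the sign $(-1)^{\mathrm{Shuf}(j,\sigma\setminus\{j\})}$ with $\mathrm{Shuf}(j,\sigma\setminus\{j\})=\mathrm{Card}\{i\in\sigma\mid i<j\}$. On the tensor side the Koszul rule \eqref{def:tensord} makes the differential act on the factor $u_j$ only after passing the lower-indexed factors, among which precisely the $u_i$ with $i\in\sigma$ and $i<j$ are odd; since $d(u_j)=t_j$ and $d(t_i)=0$, this contributes exactly the sign $(-1)^{\mathrm{Card}\{i\in\sigma\mid i<j\}}$ and sends $u_\sigma t_{I\setminus\sigma}$ to $u_{\sigma\setminus\{j\}}t_{I\setminus(\sigma\setminus\{j\})}=f_I(\sigma\setminus\{j\})$. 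The two signs agree term by term, which is the only real bookkeeping in the argument and hence the step I expect to require the most care. One should separately record the bottom case: for a vertex $\{v\}$ one has $d\{v\}=\emptyset$ in the augmented complex, matched by $d(u_v t_{I\setminus\{v\}})=t_I=f_I(\emptyset)$.

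Once $f_I$ is seen to be a bijective chain map raising degree by one, it is an isomorphism of chain complexes, and therefore induces an isomorphism on homology. Because $C_*(K_I)$ is the augmented simplicial chain complex (with $\emptyset$ generating dimension $-1$), its homology in dimension $n$ is the reduced homology $\wt{H}_n(K_I)$; combining this with the degree shift yields $\wt{H}_n(K_I)\cong H_{n+1}(\underline{R}^{K_I})$ for all $n$, as claimed.
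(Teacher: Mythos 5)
Your proposal is correct and follows essentially the same route as the paper: identify $f_I$ as a degree-one bijection on the natural bases, then match the simplicial sign $(-1)^{\mathrm{Card}\{i\in\sigma\mid i<j\}}$ from \eqref{def:dSK} against the Koszul sign produced by \eqref{def:tensord} (only the $u_i$, $i\in\sigma$, being odd), which is exactly the comparison of \eqref{def:dSK} with \eqref{eq:dr} in the paper. Your extra remarks that the span of the $u_\sigma t_{I\setminus\sigma}$, $\sigma\in K_I$, is $d$-closed and that the bottom case $d(u_v t_{I\setminus\{v\}})=t_I=f_I(\emptyset)$ works out are harmless additions consistent with the paper's (terser) argument.
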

 \begin{proof}
 Clearly $f_I$ induces an isomorphism of vector spaces (see \eqref{def:RKI}). 
 Notice that
 \begin{equation}
 d(\otimes_{i=1}^mr_i)=\sum_{t=1}^m(-1)^{\sum_{j<t}|r_j|}r_1\otimes\cdots\otimes dr_t\otimes\cdots\otimes r_m,
 \label{eq:dr}
 \end{equation}
 where $\sum_{j<t}|r_j|=\mathrm{Card}\{j<t\mid r_j=u_j\}$, since $|u_j|=1$ while $|t_j|=|1_j|=0$. 
 A comparison of \eqref{def:dSK} and \eqref{eq:dr} shows that $f_I(d\sigma)=df_I(\sigma)$, 
 for every simplex $\sigma\in K_I$, as desired. 
 \end{proof}
 Here is a conclusion of all calculations above. Let 
 $\bm{b}=\otimes_{i=1}^mb_i\in\mathcal{B}_{\underline{A}^{K}}$ 
 be a basis element 
 with support $I_{\bm{b}}=I$ (so $b_i=1_i$ when $i\not\in I_{\bm{b}}$). 
 For a simplicial chain 
 $\sigma\in C_{s-1}(K_{I})$ generated by a single simplex, $s\geq 0$, we have 
 $f_{I}(\sigma)=
 u_{\sigma}t_{I\setminus\sigma}\in R^{K_I}$ of degree $s$ ($f_I(\emptyset)=t_{I}$). 
 On the other hand, $f_{\bm{b}}\circ s^{\bm{b}}$ sends 
 $f_I(\sigma)$ to the chain (see \eqref{def:fa}) 
 \begin{equation}
	 (-1)^{\mathrm{Shuf}(\bm{b}, \sigma)} \otimes_{i\in\sigma}\hat{b}_i 
	 \otimes\otimes_{i\not\in \sigma}b_i\in\underline{(B(A),d)}^K,\label{eq:sbsigma}
 \end{equation}
 where $\hat{b}_i=(-1)^{|b_i|}[b_i]$ (see \eqref{def:hatb}), $\otimes_{i\in\sigma}\hat{b}_i 
 \otimes\otimes_{i\not\in\sigma}b_i=\otimes_{i=1}^mc_i$ with $c_i=\hat{b}_i$ or $b_i$ 
 if $i\in\sigma$ or $i\not\in\sigma$, respectively, 
 $\mathrm{Shuf}(\bm{b}, \sigma)=\sum_{i=1}^m|b_i|\mathrm{Card}(\{j\in\sigma\mid j<i\})$. In this
 way simplicial chains in $C_{s-1}(K_I)$ are sent to chains in $\underline{(B(A),d)}^K$ of 
 simplicial degree $s$.
 Together with Theorem \ref{thm:BBCG}, as $\bm{b}$ runs through $\mathcal{B}_{\underline{A}^K}$, 
 we get all elements in the homology of $\underline{(B(A),d)}^K$. 
 \begin{thm}\label{thm:basis}
	Let $\underline{A}^K\to\underline{A}^{\Delta}$ be the 
	morphism of connected Hopf algebras over a field $k$, which is 
	induced by the inclusion $K\to \Delta$ of the flag complex $K$ into the simplex 
	$\Delta$ with the same vertices. For every $s\geq 1$ 
	we have degree-preserving isomorphisms
	\begin{equation}
	Tor^{\underline{A}^K}_{s,*}(\underline{A}^{\Delta},k)=
	\bigoplus_{I \subset [m]}\wt{H}_{s-1}(K_I;k)\otimes\otimes_{i\in I}J(A_i)=
	\bigoplus_{\bm{b}\in \mathcal{B}_{\underline{A}^{\Delta}}}\wt{H}_{s-1}(K_{I_{\bm{b}}};k)
	\label{iso:Tor}
\end{equation} 
of left $\underline{A}^{\Delta}$-modules, where 
$\mathcal{B}_{\underline{A}^{\Delta}}$ consists of all monomials $\bm{b}=\otimes_{i=1}^mb_i$, $b_i$ running through a 
basis $\mathcal{B}_{A_i}$ of homogeneous elements of $A_i$, $J(A_i)$ the (co)augmentation ideal of $A_i$, 
$i=1,\ldots,m$. 
Moreover, under the isomorphisms above the left 
$\underline{A}^{\Delta}$-action on the left-hand side is given as follows. 
For $\bm{b'},\bm{b}\in \mathcal{B}_{\underline{A}^{\Delta}}$ with 
\begin{equation}
	\bm{b'b}=\sum_{t}k^t\bm{b}''_t,  \quad 0\not=k^t\in k, \ 
	\bm{b}''_t\in \mathcal{B}_{\underline{A}^{\Delta}},\label{eq:b'b}
\end{equation} 
we have  
\[ \bm{b}'\cdot \wt{H}_*(K_{I_{\bm{b}}},k)\subset \bigoplus_{t} \wt{H}_*(K_{I_{\bm{b}''_t}},k), \]
sending $\alpha\in \wt{H}_*(K_{I_{\bm{b}}},k) $ to $\sum_{t}k_t i_*^t(\alpha)$, where $i_*$ is 
induced by the simplicial inclusion $K_{I_{\bm{b}}}\to K_{I_{\bm{b}''_t}}$.
\end{thm}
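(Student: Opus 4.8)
The plan is to compute $Tor^{\underline{A}^K}_{*,*}(\underline{A}^{\Delta},k)$ as the homology of the polyhedral chain complex $\underline{C}^K=\underline{(B(A),d)}^K$ and then transport the basis of Theorem \ref{thm:BBCG} through the resulting identifications. By Proposition \ref{prop:acyclic} the complex $\underline{A}^K\otimes_{\tau}\underline{(\bar{B}(A),\bar{d})}^K$ is a free resolution of $k$ over $\underline{A}^K$, and by Proposition \ref{prop:poly} applying $\underline{A}^{\Delta}\otimes_{\underline{A}^K}(-)$ produces exactly $\underline{C}^K$; hence $Tor^{\underline{A}^K}_{s,*}(\underline{A}^{\Delta},k)\cong H_s(\underline{C}^K)$. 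Theorem \ref{thm:BBCG} splits this as $\bigoplus_{\bm{b}}H_*(s^{\bm{n_b}}R^{K_{I_{\bm{n_b}}}})$; the suspension relation \eqref{eq:dsd} identifies each summand with $H_*(R^{K_{I_{\bm{b}}}})$ up to a degree shift by $|\bm{n_b}|$, and the preceding Lemma identifies $H_s(R^{K_{I_{\bm{b}}}})$ with $\wt{H}_{s-1}(K_{I_{\bm{b}}};k)$. Collecting those $\bm{b}$ whose support $I_{\bm{b}}$ equals a fixed $I$, and observing that such $\bm{b}$ range over a basis of $\otimes_{i\in I}J(A_i)$ of internal degree $|\bm{b}|$, gives the first (degree-preserving) equality in \eqref{iso:Tor}; the second equality is just its ungrouped reformulation.

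For the module structure I would first record that the left $\underline{A}^{\Delta}$-action on $Tor^{\underline{A}^K}_{*,*}(\underline{A}^{\Delta},k)$ is induced by left multiplication on the $\underline{A}^{\Delta}$-factor of $\underline{C}^K=\underline{A}^{\Delta}\otimes_{\tau}\underline{(\bar{B}(A),\bar{d})}^K$. A class $\alpha\in\wt{H}_{s-1}(K_{I_{\bm{b}}};k)$ is represented, via $f_{\bm{b}}\circ s^{\bm{n_b}}\circ f_{I_{\bm{b}}}$, by a cycle $\sum_{\sigma}\lambda_{\sigma}(\pm)\otimes_{i\in\sigma}\hat{b}_i\otimes\otimes_{i\notin\sigma}b_i$ of the form \eqref{eq:sbsigma}, where $\hat{b}_i=(-1)^{|b_i|}[b_i]$ (see \eqref{def:hatb}) and the sum runs over the simplices $\sigma$ of a simplicial cycle representing $\alpha$ in $C_{s-1}(K_{I_{\bm{b}}})$. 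I would then apply $\bm{b}'=\otimes_i b_i'$ to this representative, working slot by slot.

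The computation separates into three kinds of slots. For $i\notin I_{\bm{b}}$ the factor $1_i$ becomes $b_i'$; for $i\in I_{\bm{b}}\setminus\sigma$ the factor $b_i$ becomes the product $b_i'b_i$; and for $i\in\sigma$ the ``$u$-type'' factor $\hat{b}_i=(-1)^{|b_i|}1_i[b_i]$ becomes $(-1)^{|b_i|}b_i'[b_i]$. Expanding each product in the chosen basis, $b_i'b_i=\sum_{t}k_i^{t}b''_{i,t}$, the whole result decomposes over the basis elements $\bm{b}''_t$ occurring in $\bm{b}'\bm{b}=\sum_t k^t\bm{b}''_t$ of \eqref{eq:b'b}. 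In a fixed $\bm{b}''_t$-summand the $t$-type and $1$-type slots reproduce the generators $b''_{i,t}$ and $1_i$ of \eqref{basis:E1}, while the $u$-type slot $b_i'[b_i]$ represents, through the acyclicity of $B(A_i)$ underlying the identification $H_*(B(A_i),A_i)\cong sJ(A_i)$ and the relation $d(b_i'[b_i])=(-1)^{|b_i'|}b_i'b_i$, exactly the matching combination of $u$-generators $\widehat{b''_{i,t}}$. The key point is that the set of slots carrying a $u$-generator is unchanged, remaining $\sigma$, so the underlying simplicial cycle is still $\sum_\sigma\lambda_\sigma\sigma$, now viewed inside $C_{s-1}(K_{I_{\bm{b}''_t}})$; since $b_i\neq 1_i$ forces $b_i'b_i$ to have positive degree we have $I_{\bm{b}}\subset I_{\bm{b}''_t}$, and this reindexing is precisely the inclusion-induced map $i^t_*$, yielding $\bm{b}'\cdot\alpha=\sum_t k^t i^t_*(\alpha)$.

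The hard part will be the joint bookkeeping of signs together with the homological identification in the $u$-slots: one must check that left multiplication is compatible with the isomorphism $H_*(B(A_i),A_i)\cong sJ(A_i)$ so that $b_i'[b_i]$ genuinely represents $\pm\sum_t k_i^{t}\widehat{b''_{i,t}}$, and that no higher differential in the BBCG spectral sequence disturbs this passage to the associated graded. I expect to carry this out on the $E_1$-page of Theorem \ref{thm:BBCG}, where Proposition \ref{prop:E1} displays the $u$- and $t$-generators explicitly, rather than at the chain level, and to collate the Koszul signs from \eqref{def:sRK}, \eqref{eq:sbsigma} and the bar differential \eqref{def:d_i}; the identity $|b_i'b_i|=|b_i'|+|b_i|$ keeps all shuffle signs consistent between source and target, so that the coefficients produced coincide with the $k^t$ of \eqref{eq:b'b} without an extra sign.
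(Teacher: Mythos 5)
Your computation of the isomorphism \eqref{iso:Tor} is exactly the paper's: Propositions \ref{prop:acyclic} and \ref{prop:poly} identify $Tor^{\underline{A}^K}_{s,*}(\underline{A}^{\Delta},k)$ with $H_s(\underline{(B(A),d)}^K)$, Theorem \ref{thm:BBCG} splits this over $\mathcal{B}_{\underline{A}^{\Delta}}$, and the lemma $\wt{H}_*(K_I)=H_{*+1}(\underline{R}^{K_I})$ together with \eqref{eq:dsd} converts each summand into reduced simplicial homology of $K_{I_{\bm{b}}}$. The substance of your module-action argument --- apply $\bm{b}'$ slot by slot to a representative of the form \eqref{eq:sbsigma}, observe that the $t$- and $1$-type slots reproduce the expansion $b_i'b_i=\sum_{t_i}k_i^{t_i}b''_{it_i}$ on the nose, and that in the $u$-type slots $b_i'\hat b_i$ and $\sum_{t_i}k_i^{t_i}\hat b''_{it_i}$ have the same boundary and hence differ by an exact term $dz_i$ in the acyclic complex $B(A_i)$ --- is also the paper's.

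The one point in your plan that would not work as stated is the intention to finish the verification ``on the $E_1$-page of Theorem \ref{thm:BBCG} rather than at the chain level.'' Left multiplication by $\bm{b}'$ does preserve the filtration \eqref{def:fil} and hence induces a map of spectral sequences, but its effect on $E_1$ (or even $E_\infty$) only determines the action on the associated graded of the filtration on homology, i.e.\ up to classes of lower filtration; it does not by itself justify replacing $b_i'\hat b_i$ by $\widehat{b_i'b_i}$ in one slot of a tensor-product cycle without changing the homology class. What is needed is a chain-level homotopy, which the paper isolates as Lemma \ref{lem:basis}: two chain maps into $\otimes_{i=1}^m(C_i,d_i)$ whose values on each generator differ slot-wise by boundaries $d_iz_i$ agree on homology, the homotopy being assembled one slot at a time via $S(c)=z_1\otimes\otimes_{i\geq 2}c_i$ and its analogues. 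Once you route your sign bookkeeping through that lemma --- taking $z_i=(-1)^{|b_i'|+|b_i|}[b_i'|b_i]$ in the $u$-slots and $z_i=0$ elsewhere --- your argument coincides with the paper's, including the coefficient identity $k^t=(-1)^{\mathrm{Shuf}(\bm{b}',\bm{b})}\prod_i k_i^{t_i}$ and the observation that $k^t\neq 0$ forces $I_{\bm{b}}\subset I_{\bm{b}''_t}$, so that the reindexed cycle is precisely $i_*^t(\alpha)$.
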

 Our proof of this theorem requires a standard technique in homological algebra:
 \begin{lem}\label{lem:basis}
 Suppose $g,h\co (C,d)\to\otimes_{i=1}^m(C_i,d_i)$ are two chain maps such that 
 for every additive generator $c\in C$ there exists a corresponding $\otimes_{i=1}^m z_i\in 
 \otimes_{i=1}^mC_i$ with
 $g(c)=\otimes_{i=1}^mc_i$, $h(c)=\otimes_{i=1}^m (c_i+d_iz_i)$. Then
 $g,h$ coincide on passage to homology. 
 \end{lem}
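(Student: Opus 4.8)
The plan is to prove the stronger assertion that $g$ and $h$ already agree \emph{before} passing to homology once we compose with a suitable projection onto homology, and then to use that this projection is injective on homology. First I would, for each $i$, exploit that $k$ is a field to produce a $k$-linear chain map $p_i\co (C_i,d_i)\to (H_*(C_i),0)$ to the homology equipped with the zero differential, inducing the identity on homology. Concretely, over a field one splits $C_i=H_*(C_i)\oplus \mathrm{im}\,d_i\oplus L_i$ with $L_i$ a complement of $\ker d_i$, and lets $p_i$ be the projection onto the first summand; then $p_i$ kills $\mathrm{im}\,d_i$, so in particular $p_i d_i=0$, and $p_i$ descends to the identity $Z(C_i)/\mathrm{im}\,d_i=H_*(C_i)$ on homology.

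Next I would assemble these into the chain map $p=\otimes_{i=1}^m p_i\co \otimes_{i=1}^m(C_i,d_i)\to \otimes_{i=1}^m(H_*(C_i),0)$, whose target carries the zero differential. By the K\"{u}nneth theorem over the field $k$, $p$ induces an isomorphism
\[
p_*\co H_*\!\left(\otimes_{i=1}^m C_i\right)\stackrel{\cong}{\longrightarrow}\otimes_{i=1}^m H_*(C_i),
\]
since it is identified with $\otimes_{i=1}^m (p_i)_*=\otimes_{i=1}^m \mathrm{id}$; in particular $p_*$ is injective.

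The key computation is then immediate on additive generators. For a generator $c\in C$ with $g(c)=\otimes_{i=1}^m c_i$ and $h(c)=\otimes_{i=1}^m(c_i+d_iz_i)$, the relation $p_id_i=0$ gives $p_i(c_i+d_iz_i)=p_i(c_i)$ for every $i$, whence
\[
p\big(h(c)\big)=\otimes_{i=1}^m p_i(c_i+d_iz_i)=\otimes_{i=1}^m p_i(c_i)=p\big(g(c)\big).
\]
As this holds on every additive generator, $p\circ g=p\circ h$ as chain maps. Passing to homology yields $p_*\circ g_*=p_*\circ h_*$, and the injectivity of $p_*$ forces $g_*=h_*$, as claimed.

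The only points that need care—and hence the main, though routine, obstacle—are the existence over $k$ of the component projections $p_i$ with $p_id_i=0$ inducing the identity on homology, and the identification of $p_*$ with the K\"{u}nneth isomorphism; both are standard for complexes of vector spaces over a field. It is worth emphasizing that the argument never invokes any compatibility among the chosen $z_i$ for different generators (nor among generators appearing in a single $dc$), which is precisely why it succeeds even though $g$ and $h$ need not be joined by a natural chain homotopy.
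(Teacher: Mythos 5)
Your proof is correct, and it takes a genuinely different route from the one in the paper. The paper's argument interpolates between $g$ and $h$ one tensor factor at a time, via the maps $g_k(c)=\otimes_{i\leq k}(c_i+d_iz_i)\otimes(\otimes_{i>k}c_i)$, and exhibits an explicit chain homotopy $S$ at each step (e.g.\ $S(c)=z_1\otimes\otimes_{i=2}^m c_i$ between $g$ and $g_1$); this yields the stronger conclusion that $g$ and $h$ are chain homotopic and in principle makes no use of the ground ring being a field, but verifying the identity $dS+Sd=g_1-g$ does require tracking how the chosen $z_i$ for a generator interact with those for the generators appearing in its boundary. Your argument instead post-composes with the componentwise projection $p=\otimes_{i=1}^m p_i$ onto homology: over the field $k$ each $p_i$ exists as a chain map with $p_id_i=0$ inducing the identity on homology, $p_*$ is the K\"{u}nneth isomorphism (each $p_i$ has degree $0$, so no Koszul signs enter), and $p\circ g=p\circ h$ is immediate from $p_i(c_i+d_iz_i)=p_i(c_i)$. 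This is cleaner and, as you observe, entirely insensitive to any compatibility among the various $z_i$, at the cost of relying essentially on the field hypothesis (which holds throughout the paper's Section 5) and of proving only equality on homology rather than a chain homotopy --- which is all the paper ever uses.
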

 \begin{proof}
	 Let $g_1$ be the chain map given by $g_1(c)=(c_1+dz_1)\otimes\otimes_{i=2}^m c_i$ 
	 for each additive generator $c\in C$.
	 First we show that there exists a chain homotopy $S$ between $g$ and $g_1$.
	 We define $S(c)=z_1\otimes\otimes_{i=2}^m c_i$, and it is easy to check 
	 \[
		 d_{\otimes_{i}C_i}S(c)+Sd(c)=g_1(c)-g(c),
	 \]
	 for every additive generator $c\in C$, hence the conclusion holds when $h=g_1$. 
	 Similarly we construct $g_k(c)=\otimes_{i\leq k}(c_i+d_iz_i)\otimes(\otimes_{i>k}c_i)$, 
	 $k=2,\ldots,m$, and there is a chain homotopy between $g_k$ and $g_{k+1}$. Finally 
	 we get the chain homotopy between $g$ and $g_m=h$.
 \end{proof}

\begin{proof}[Proof of Theorem \ref{thm:basis}]
	The isomorphism \[Tor^{\underline{A}^K}_{s,*}(\underline{A}^{\Delta},k)=
	H_s\left(\underline{(B(A),d)}^K  \right)\]  follows from Propositions 
	\ref{prop:acyclic}, \ref{prop:poly}, whence the isomorphism \eqref{iso:Tor}, 
	by the conclusion above. On the left action of $\underline{A}^{\Delta}$, 
	suppose $\bm{b}=\otimes_{i=1}^m b_i, \bm{b'}=\otimes_{i=1}^m b_i'$, 
	with $b_i,b_i'$ from a basis $\mathcal{B}_{A_i}$ of $A_i$, 
	such that $b_i'b_i=\sum_{t_i} k_i^{t_i}b''_{it_i}$ as a linear sum of basis elements in 
	$\mathcal{B}_{A_i}$, $i=1,\ldots,m$. Clearly in \eqref{eq:b'b} we have 
	$\bm{b}''_t=\otimes_{i=1}^mb''_{it_i}$ and 
	\begin{equation}
		k^t=(-1)^{\mathrm{Shuf}(\bm{b'},\bm{b})}\prod_{i}k_i^{t_i},\quad 
		t=(t_i)_{i=1}^m,\label{eq:kt}
\end{equation}
hence $k^t\not=0$ 
	already implies that $I_{\bm{b}}\subset I_{\bm{b}_t''}$. Let 
	$\sigma\in C_{s-1}(K_{I_{\bm{b}}})$ be the simplicial chain generated by a single 
	simplex $\sigma\in K_{I_{\bm{b}}}$. Using chains of the form
	\eqref{eq:sbsigma}, it suffices to show that, the two chain maps
	$g,h\co s^{\bm{b}}
		\underline{R}^{K_{I_{\bm{b}}}}\to \underline{(B(A),d)}^K$ with
	\begin{equation}
	g(\sigma)=\bm{b}'\cdot (-1)^{\mathrm{Shuf}(\bm{b}, \sigma)} \otimes_{i\in\sigma}\hat{b}_i 
	\otimes\otimes_{i\not\in\sigma}b_i, \
	h(\sigma)=\sum_{t}k^t(-1)^{\mathrm{Shuf}(\bm{b}''_t, \sigma)} 
	\otimes_{i\in\sigma}\hat{b}_{it_i}'' 
	 \otimes\otimes_{i\not\in\sigma}b_{it_i}''\label{def:homotopy}
 \end{equation}
for every $\sigma\in K_{I_{\bm{b}}}$, coincides on passage to homology,
since each summand in $h(\sigma)$ is  
	$f_{\bm{b}''_t}\circ s^{\bm{b}''_t}\circ f_{I_{\bm{b}_t''}}(\sigma)$, with 
	$\sigma\in C_{s-1}(K_{I_{\bm{b}_t''}})$ the image of the chain map induced by the simplicial
	inclusion $K_{I_{\bm{b}}}\to K_{I_{\bm{b}_t''}}$. In 
	$\underline{A}^{\Delta}\otimes_{\tau}\underline{(\bar{B}(A),\bar{d})}^K\cong 
	\underline{(B(A),d)}^K$ (see Proposition \ref{prop:poly}) we have 
	\begin{align*}
	&\bm{b}'\cdot \otimes_{i\in\sigma}\hat{b}_i 
		\otimes\otimes_{i\not\in\sigma}b_i=
	(-1)^{\mathrm{Shuf}(\bm{b}',\bm{b})+\mathrm{Shuf}(\bm{b}',\sigma)}
	\otimes_{i\in\sigma}b'_i\hat{b}_i\otimes\otimes_{i\not\in\sigma}b_i'b_i\\
   &=(-1)^{\mathrm{Shuf}(\bm{b}',\bm{b})+\mathrm{Shuf}(\bm{b}',\sigma)}
   \otimes_{i\in\sigma} (\widehat{b_i'b_i}+dz_i)\otimes\otimes_{i\not\in\sigma}
   b_i'b_i;\\
&(-1)^{\mathrm{Shuf}(\bm{b}',\bm{b})+\mathrm{Shuf}(\bm{b}',\sigma)}
\otimes_{i\in\sigma} \widehat{b_i'b_i}\otimes\otimes_{i\not\in\sigma}
  b_i'b_i\\
  &=(-1)^{\mathrm{Shuf}(\bm{b}',\bm{b})+\mathrm{Shuf}(\bm{b}',\sigma)}
\otimes_{i\in\sigma} \sum_{t_i}k^{t_i}_i\hat{b}_{it_i}''\otimes\otimes_{i\not\in\sigma}
  (\sum_{t_i}k^{t_i}_ib_{it_i}'')\\
  &=(-1)^{\mathrm{Shuf}(\bm{b},\sigma)}\sum_{t}(-1)^{\mathrm{Shuf}(\bm{b}_t'',\sigma)}k^t
  \otimes_{i\in\sigma} \hat{b}_{it_i}''\otimes\otimes_{i\not\in\sigma}
  b_{it_i}'',
	\end{align*}
	where $d{z}_i=b'_i\hat{b}_i-\sum_{t_i} k^{t_i}_i\hat{b}_{ti}''$, for some 
	${z}_i\in B(A_i)$, which is well defined by the acyclicity of $B(A_i)$
	\footnote{More explicitly, 
		given $\hat{b}_i=(-1)^{|b_i|}[b_i]$ and 
		$\sum_t k^t_i\hat{b}_{ti}''=(-1)^{|b_i'|+|b_i|}[b_i'b_i]$,  
		we can choose 
	$\hat{z}_i=(-1)^{|b_i'|+|b_i|}[b_i'|b_i]$.}, 
	since
	\[d(b'_i\hat{b}_i)=(-1)^{|b_i'|+|b_i|}b_i'b_i=
	d( \widehat{b_i'b_i} ).\]
	We see that $g,h$ induces the same morphism of homology groups, by Lemma \ref{lem:basis}
	(in which we choose $z_i=0$ for $b_i'b_i$).
 \end{proof}

\begin{remark} A similar spectral sequence with easier calculations works for the chain complex
\[k\otimes_{\underline{A}^{\Delta}}\underline{(B(A),d)}^K\cong\underline{(\bar{B}(A),\bar{d})}^K.\]
Notice that that $H_s(\underline{(\bar{B}(A),\bar{d})}^K)\cong Tor^{\underline{A}^K}_{s,*}(k,k)$,\footnote{We 
adopt the notation that an element $[a_1|\cdots|a_s]$ is bigraded with degree $(s,\sum_{i=1}^s|a_i|)$.}
by Theorem \ref{thm:isophi}, and 
\begin{equation*}
H_s(\bar{B}(A_i),\bar{d}_i)\cong Tor^{A_i}_{s,*}(k,k), \quad i=1,\ldots,m.
\end{equation*}
We have an isomorphism 
\begin{equation}
H_{*}(\underline{(\bar{B}(A),\bar{d})}^K)=\underline{H_*(\bar{B}(A),\bar{d})}^K \label{iso:TorAK}
\end{equation}
of bigraded vector spaces (the polyhedral products above are in the category of graded chain complexes). This is an analogue of \cite{BBCG10}[Theorem 2.34, pp. 14--15].
\end{remark}

In what follows we use the same notation as in Theorem \ref{thm:basis}.
 Let $A'=\underline{A}^{K}\square_{\underline{A}^{\Delta}}k$ be the cotensor with respect to 
 the epimorphism $\underline{A}^K\to \underline{A}^{\Delta}$. A theorem of Milnor and Moore shows  
 that 
 \begin{equation}
	 \underline{A}^K=A'\otimes \underline{A}^{\Delta}\label{eq:tensor}
 \end{equation}as both a left $A'$-module and a right 
 $\underline{A}^{\Delta}$ comodule (see \cite[Theorem 4.7]{MM65}). The inclusion 
 $B(A')=A'\otimes_{\tau}\bar{B}(A')\to B(\underline{A}^K)=
 A\otimes_{\tau}\bar{B}(\underline{A}^K)$ of acyclic complexes 
 corresponding to the Hopf subalgebra $A'\subset  \underline{A}^K$
 induces a chain homotopy equivalence, whence the chain homotopy equivalence
 \begin{equation}
	 k\otimes_{A'}A'\otimes_{\tau}\bar{B}(A')\to k\otimes_{A'}\underline{A}^K
	 \otimes_{\tau}\bar{B}(\underline{A}^K)=
	 \underline{A}^{\Delta}\otimes_{\tau}\bar{B}(\underline{A}^K),\label{eq:cor}
 \end{equation}
 which induces the change of ring isomorphism 
      \begin{equation}
      Tor^{A'}_{n,*}(k,k)\cong Tor^{\underline{A}^K}_{n,*}(\underline{A}^{\Delta},k).  \label{iso:cor}
      \end{equation} 
    Recall that the Euler-Poincar\'{e} series of a 
	 graded vector space $V=\oplus_{n=0}^{\infty}V_n$ over a field $k$ (which is of finite 
	 type so that $\mathrm{dim}V_n$ is finite, for each $n$) 
	 is given by 
	 $P(V;t)=\sum_{n=0}^{\infty}(\dim V_n)t^n$.  
 \begin{cor}\label{cor:EP}
		Suppose that $A_i$ is of finite type, $i=1,\ldots,m$.
		 For the Euler-Poincar\'{e} series we have 
	 \begin{equation}\frac{1}{P(A'; t)}=1+\sum_{I\subset\{1,\ldots,m\}\atop I\not=\emptyset}
		\sum_{n=1}^{\infty}(-1)^n
		\mathrm{dim}\wt{H}_{n-1}(K_{I};k)\prod_{i\in I}P(J(A_i);t)\label{eq:EPF}
\end{equation}
and 
	 \begin{equation}
		 P(\underline{A}^K;t)=
		 P(A'; t)\prod_{i=1}^mP(A_i;t). \label{eq:EPX}
	 \end{equation}
 \end{cor}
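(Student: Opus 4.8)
The plan is to derive both identities from the homological computation of Theorem \ref{thm:basis} together with the standard Euler--Poincaré principle for connected graded algebras. The engine for \eqref{eq:EPF} is the formal power series identity
\[
\frac{1}{P(A;t)}=\sum_{s\geq 0}(-1)^sP\bigl(Tor^{A}_{s,*}(k,k);t\bigr),
\]
valid for any connected graded $k$-algebra $A$ of finite type. I would obtain it from the reduced bar construction: since $\bar{B}_s(A)=J(A)^{\otimes s}$ and $J(A)$ is concentrated in positive degrees, in each fixed internal degree only finitely many $s$ contribute, so the Euler--Poincaré principle gives $\sum_s(-1)^sP(\bar{B}_s(A);t)=\sum_s(-1)^sP(H_s(\bar{B}(A));t)$. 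The left-hand side is the geometric series $\sum_{s\geq 0}(-1)^s\bigl(P(A;t)-1\bigr)^s=1/P(A;t)$ (legitimate since $P(A;t)-1$ has zero constant term), while $H_s(\bar{B}(A))\cong Tor^A_{s,*}(k,k)$.

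Next I would apply this identity to $A=A'$, which is connected and of finite type (the latter because each $A_i$ is of finite type and $\underline{A}^K\cong A'\otimes\underline{A}^{\Delta}$ by \eqref{eq:tensor}). The $s=0$ term contributes $P(k;t)=1$, and for $s\geq 1$ the change-of-rings isomorphism \eqref{iso:cor} together with Theorem \ref{thm:basis} identifies $Tor^{A'}_{s,*}(k,k)$ with $\bigoplus_{I\subset[m]}\wt{H}_{s-1}(K_I;k)\otimes\bigotimes_{i\in I}J(A_i)$. The key bookkeeping point is that the two gradings decouple: the factor $\wt{H}_{s-1}(K_I;k)$ is a pure multiplicity sitting in internal degree $0$, whereas the internal $t$-degree is carried entirely by $\bigotimes_{i\in I}J(A_i)$, whose Poincaré series is $\prod_{i\in I}P(J(A_i);t)$. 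Hence $P\bigl(Tor^{A'}_{s,*}(k,k);t\bigr)=\sum_{I}\dim\wt{H}_{s-1}(K_I;k)\prod_{i\in I}P(J(A_i);t)$. Substituting, interchanging the sums, discarding the $I=\emptyset$ summand (for which $\wt{H}_{s-1}(K_\emptyset;k)=0$ whenever $s\geq 1$), and renaming $s=n$ yields \eqref{eq:EPF} verbatim.

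The second identity \eqref{eq:EPX} is then immediate: by \eqref{eq:tensor} and $\underline{A}^{\Delta}=\bigotimes_{i=1}^mA_i$ we have an isomorphism $\underline{A}^K\cong A'\otimes\bigotimes_{i=1}^mA_i$ of graded vector spaces, and $P(-;t)$ is multiplicative on tensor products of finite-type graded vector spaces, so $P(\underline{A}^K;t)=P(A';t)\prod_{i=1}^mP(A_i;t)$.

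The only genuinely delicate step is the justification of the Euler--Poincaré identity and its degreewise convergence, so this is where I would take care. One must verify that in each internal degree all relevant complexes are finite-dimensional and bounded. For the homological direction this is automatic, since $\wt{H}_{s-1}(K_I;k)=0$ once $s-1>\dim K_I$, so the sum over $n$ in \eqref{eq:EPF} is in fact \emph{finite} for every fixed $I$; the potential pitfall is therefore not convergence of that sum but rather ensuring $A'$ itself is of finite type, which I would record explicitly from \eqref{eq:tensor}.
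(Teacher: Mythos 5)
Your proposal is correct and follows essentially the same route as the paper: the identity $1/P(A;t)=\sum_n(-1)^nP(Tor^A_{n,*}(k,k);t)$, the change-of-rings isomorphism \eqref{iso:cor}, Theorem \ref{thm:basis} with $P(\otimes_{i\in I}J(A_i);t)=\prod_{i\in I}P(J(A_i);t)$ for \eqref{eq:EPF}, and the Milnor--Moore splitting \eqref{eq:tensor} for \eqref{eq:EPX}. The extra justifications you supply (the bar-construction derivation of the Euler--Poincar\'e identity and the finiteness bookkeeping) are sound but not part of the paper's argument, which simply cites these facts as well known.
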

 \begin{proof}
	 The first statement follows from the well-known fact that 
	 \begin{equation}
	 \frac{1}{P(A;t)}=\sum_{n=0}^{\infty}(-1)^nP(Tor_{n,*}^A(k,k);t)\label{eq:PAT}
	 \end{equation} 
	 for a graded algebra 
	 $A=\oplus_{n\geq 0}A_n$ of finite type, together with Theorem \ref{thm:basis}, 
	 the change of ring isomorphism \eqref{iso:cor} and 
	 \[       
	 P(\otimes_{i\in I}J(A_i);t)=\prod_{i\in I}P(J(A_i);t).\]
	 The second statement follows from
	 \eqref{eq:tensor}.
 \end{proof}
\begin{exm}
Suppose that $K$ bounds a square, with $\{i,i+1\}\in K$, for mod $4$ integers $i=1,2,3,4$, 
and $\underline{X}=(S^{n_i+1})_{i=1}^4$ are $4$ spheres 
with $n_i\geq 1$. By Proposition \ref{prop:exact} the right-hand side of \eqref{eq:EPF} becomes
\begin{align*}
&1-\frac{t^{n_1+n_3}}{(1-t^{n_1})(1-t^{n_3})}-\frac{t^{n_2+n_4}}{(1-t^{n_2})(1-t^{n_4})}+\prod_{i=1}^4\frac{t^{n_i}}{1-t^{n_i}}\\
&=
\frac{1-\sum_{i=1}^4t^{n_i}+\sum_{i=1}^4t^{n_i+n_{i+1}}}{\prod_{i=1}^4(1-t^{n_i})},
\end{align*}
together with \eqref{eq:EPX} we have
\begin{equation}
P(\underline{A}^K)=\frac{1}{1-\sum_{i=1}^4t^{n_i}+\sum_{i=1}^4t^{n_i+n_{i+1}}}\label{eq:PAT1}.
\end{equation}

On the other hand, according to Adams and Hilton \cite[Theorem 4.3]{AH56}, the Adams-Hilton model over 
$k$ of $\underline{X}^K$ is a free associative 
algebra generated by $8$ elements 
$x_i,y_i$ with differentials $dy_i=[x_i,x_{i+1}]$ for $i=1,\ldots,4$ mod $4$, whose homology gives 
$H_*(\Omega\underline{X}^K;k)=\underline{A}^K$. A theorem of Lemaire (see \cite[Theorem 2.3.8, p. 30]{Lem74}) implies that $Tor_{1,*}^{\underline{A}^K}(k,k)$ is spanned 
by $x_1,\ldots, x_4$ and $Tor_{2,*}^{\underline{A}^K}(k,k)$ is spanned by $dy_1,\ldots, dy_4$, respectively, 
and that $Tor_{s,*}^{\underline{A}^K}(k,k)$
vanishes for $s\geq 3$. This coincides with \eqref{eq:PAT1}. In fact, it is straightforward to check that $\underline{X}^K=(S^{n_1+1}\vee S^{n_3+1})\times (S^{n_2+1}\vee S^{n_4+1})$ by the definition of a polyhedral product.\footnote{We would like to thank the referee to point this out.}
\end{exm}

 \begin{cor}
	 As an algebra, $A'$ is finitely presented if  
	 it is finitely generated.  
 \end{cor}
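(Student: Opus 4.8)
The plan is to recast both hypotheses as finiteness statements about bigraded $\mathrm{Tor}$ groups and then read the conclusion off the explicit formula of Theorem \ref{thm:basis}. Recall that for a connected graded algebra over a field $k$ the space $Tor^{A'}_{1,*}(k,k)$ is the module of indecomposables, a basis of which is a minimal set of generators, while $Tor^{A'}_{2,*}(k,k)$ is the space of minimal relations; thus $A'$ is finitely generated precisely when $\dim_k Tor^{A'}_{1,*}(k,k)<\infty$, and finitely presented precisely when, in addition, $\dim_k Tor^{A'}_{2,*}(k,k)<\infty$. By the change-of-ring isomorphism \eqref{iso:cor} together with Theorem \ref{thm:basis}, for every $s\ge 1$ we have
\[
Tor^{A'}_{s,*}(k,k)\cong\bigoplus_{I\subset\{1,\ldots,m\}}\wt{H}_{s-1}(K_I;k)\otimes\otimes_{i\in I}J(A_i),
\]
so the task reduces to showing that if the summand for $s=1$ is finite-dimensional then so is the one for $s=2$.

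First I would locate the infinite-dimensional summands. There are finitely many subsets $I$ and each $\wt{H}_{s-1}(K_I;k)$ is finite-dimensional, so the summand indexed by $I$ is infinite-dimensional exactly when $\wt{H}_{s-1}(K_I;k)\ne 0$ and $\otimes_{i\in I}J(A_i)$ is infinite-dimensional; over a field the latter happens precisely when $J(A_i)\ne 0$ for all $i\in I$ and $A_v$ is infinite-dimensional for at least one $v\in I$. Consequently $A'$ is finitely generated if and only if no $I$ carries such a pair of conditions in homological degree one; equivalently, whenever $I$ contains an infinite-dimensional $A_v$ and all $A_i$, $i\in I$, are nontrivial, the full subcomplex $K_I$ is connected.

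The geometric core of the argument converts a nonzero class in $\wt{H}_1$ into a nonzero class in $\wt{H}_0$ of a two-vertex subcomplex. Suppose, for contradiction, that $Tor^{A'}_{2,*}(k,k)$ has an infinite-dimensional summand: there is an $I$ with $\wt{H}_1(K_I;k)\ne 0$, with $J(A_i)\ne 0$ for all $i\in I$, and with some $A_v$, $v\in I$, infinite-dimensional. Since $K$ is flag, $K_I$ is flag as well, so if $v$ were adjacent to every other vertex of $K_I$ then any simplex of $K_I$ together with $v$ would again span a simplex, whence $K_I$ would be the cone on $v$ and hence contractible, forcing $\wt{H}_1(K_I;k)=0$. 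Thus $v$ must have a non-neighbour $w\in I$, and the two-vertex full subcomplex $K_{\{v,w\}}$ is a pair of disjoint points, so $\wt{H}_0(K_{\{v,w\}};k)\ne 0$. As $A_v$ is infinite-dimensional and $J(A_w)\ne 0$, the tensor product $J(A_v)\otimes J(A_w)$ is infinite-dimensional, so $\wt{H}_0(K_{\{v,w\}};k)\otimes J(A_v)\otimes J(A_w)$ is an infinite-dimensional summand of $Tor^{A'}_{1,*}(k,k)$, contradicting finite generation.

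Hence $Tor^{A'}_{2,*}(k,k)$ has no infinite-dimensional summand; being a finite sum of finite-dimensional spaces it is finite-dimensional, and with the hypothesis $\dim_k Tor^{A'}_{1,*}(k,k)<\infty$ this shows $A'$ is finitely presented. The only step with genuine content, and the one I expect to be the main obstacle, is the flag-complex cone observation: it is exactly what guarantees that the vertex responsible for the infinite dimensionality in homological degree two already forces a missing edge, and so an infinite-dimensional class already in degree one. The same reasoning applies verbatim to every $\wt{H}_s$ with $s\ge 1$, which is why the obstruction to higher finiteness lives entirely in the degree-one $\mathrm{Tor}$ group.
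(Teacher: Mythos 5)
Your proposal is correct and follows essentially the same route as the paper: translate finite generation and finite presentation into finiteness of $Tor^{A'}_{1,*}(k,k)$ and $Tor^{A'}_{2,*}(k,k)$, apply the change-of-rings isomorphism \eqref{iso:cor} and Theorem \ref{thm:basis}, and then show that an infinite-dimensional summand in homological degree two forces one in degree one via a missing edge $\{v,w\}$ with $A_v$ infinite-dimensional. The only difference is that you spell out the flag-complex cone argument behind the existence of the non-neighbour $w$, which the paper leaves implicit.
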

 \begin{proof}
	 It is well-known that $A'$ finitely generated if and only if 
	 the dimension $b_1'$ of  $Tor^{A'}_{1,*}(k,k)$ is finite, and in this case 
	 $A'$ is finitely presented if and only if the dimension $b_2'$ of 
	 $Tor_{2,*}^{A'}(k,k)$ is finite. Suppose $b_2'$ is infinite. 
	 Since the total number of full subcomplexes 
	 $K_I$ is finite, there exists a full subcomplex
	 $K_I$ such that $\otimes_{i\in I}J(A_i)$ is infinite-dimensional, 
	 by Theorem \ref{thm:basis}. This implies that,  
	 at least one $A_{i_0}$, $i_0\in I$, is infinite-dimensional. Moreover, 
	 $H_1(K_I)$ is non-trivial implies that we can find $j_0\in I$, such that $\{i_0,j_0\}$
	 is not an edge of $K$, namely 
	 $\wt{H}_0(K_{ \{i_0,j_0\} } )\otimes J(A_{i_0})
	 \otimes J(A_{j_0})$ is infinite-dimensional, whence $b_1'$ is infinite.
 \end{proof}
 Let $L_x(y)=xy-(-1)^{|x||y|}yx$ be the Lie
 bracket for homogeneous elements $x,y$.
 \begin{cor}\label{Cor:GPTW16}
	As an algebra, $A'$ is generated by the set
 \[S=\bigcup_{\mathrm{dim} \wt{H}_0(K_I)>0}S_I,\]
 where $S_I$ is a collection (writing $I=\left\{ i_l \right\}_{l=1}^n$ in an increasing 
 order) of iterated Lie brackets of the form
	 \begin{equation}
	   L_{x_{i_1}}\circ L_{x_{i_2}}\circ\ldots\circ L_{x_{i_{t-1}}}\circ 
	   L_{x_{i_{t+1}}}\circ\ldots
	   \circ L_{x_{i_{n}}}(x_{i_{t}}),\label{def:Lie}
	\end{equation}
	such that $\{i_t\}$ is the smallest vertex in a connected component of $K_I$ not containing 
	the vertex $\{i_n\}$ and $x_{i_l}$ runs through a homogeneous basis of $J(A_{i_l})$, $l=1,\ldots,n$.	
	Moreover, $S$ generates $A'$ freely if and only if 
	$H_1(K_I)=0$ for every full subcomplex $K_I$ of $K$.
 \end{cor}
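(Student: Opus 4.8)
The plan is to read off a minimal generating set and the freeness of $A'$ directly from its $Tor$ groups, which have already been determined. Combining the change-of-ring isomorphism \eqref{iso:cor} with Theorem \ref{thm:basis} gives, for every $s\geq 1$,
\[
Tor^{A'}_{s,*}(k,k)\cong\bigoplus_{I\subset\{1,\ldots,m\}}\wt{H}_{s-1}(K_I;k)\otimes\otimes_{i\in I}J(A_i).
\]
Since $A'$ is a connected graded algebra over the field $k$, I would invoke the two standard homological facts: a set generates $A'$ if and only if its image spans the indecomposables $J(A')/J(A')^2\cong Tor^{A'}_{1,*}(k,k)$, and $A'$ is a free (tensor) algebra if and only if $Tor^{A'}_{s,*}(k,k)=0$ for all $s\geq 2$.

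First I would treat the case $s=1$. Here $\wt{H}_0(K_I;k)\neq 0$ exactly when $K_I$ is disconnected, and then $\dim\wt{H}_0(K_I;k)$ is one less than the number of connected components of $K_I$; the components other than the one containing the largest vertex of $I$ are indexed by a basis of $\wt{H}_0(K_I;k)$. A direct count then shows that the cardinality of $S$ equals $\sum_{\dim\wt{H}_0(K_I)>0}\dim\wt{H}_0(K_I;k)\cdot\prod_{i\in I}\dim_k J(A_i)=\dim_k Tor^{A'}_{1,*}(k,k)$, the factor $\prod_{i\in I}\dim_k J(A_i)$ coming from the homogeneous basis elements $x_{i_l}\in J(A_{i_l})$ appearing in \eqref{def:Lie}.

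Next I would verify that each iterated bracket \eqref{def:Lie} lies in $A'$ and that, as $I$ and the $x_{i_l}$ vary, these brackets map to a basis of $Tor^{A'}_{1,*}(k,k)$. Membership in $A'=\ker(\underline{A}^K\to\underline{A}^{\Delta})$ is immediate, since $L_x(y)=xy-(-1)^{|x||y|}yx$ maps to $0$ under the abelianization (the target is graded-commutative), hence so does any iterated bracket. For the identification with the homology basis I would trace the chain-level comparison map $f$ of Theorem \ref{thm:BBCG}: in simplicial degree $1$ the generator of $\wt{H}_0(K_I;k)$ represented by the difference of the component of the chosen vertex $i_t$ and that of $i_n$, tensored with $\otimes_{i\in I}x_i$, is carried to the class of the bracket \eqref{def:Lie}, the relations $[A_i,A_j]=0$ for $\{i,j\}\in K$ being precisely what converts the bar-level boundary into the iterated commutator. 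The convention selecting the smallest vertex in a component not containing $i_n$ pins down the basis of $\wt{H}_0(K_I;k)$, in complete analogy with the group-theoretic Theorem \ref{thm:PV} and Corollary \ref{cor:PV}. Matching counts and this identification together show that the images form a basis of $J(A')/J(A')^2$, so $S$ generates $A'$.

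Finally, for free generation, $S$ generates $A'$ freely if and only if $A'$ is free on its indecomposables, i.e.\ $Tor^{A'}_{s,*}(k,k)=0$ for all $s\geq 2$, which by the displayed isomorphism is equivalent to $\wt{H}_{s-1}(K_I;k)\otimes\otimes_{i\in I}J(A_i)=0$ for all $s\geq 2$ and all $I$. Assuming each $A_i\neq k$ (a vertex with $J(A_i)=0$ contributes nothing and may be deleted), this means $\wt{H}_j(K_I;k)=0$ for all $j\geq 1$ and all full subcomplexes $K_I$. I would then reduce this to the single condition $H_1(K_I)=0$ for every $K_I$ by invoking the combinatorial characterization of flag complexes, under which the vanishing of $H_1$ on all full subcomplexes is equivalent to the vanishing of all higher reduced homology (equivalently, $K^1$ is chordal, the hypothesis of Theorem \ref{thm:PV}). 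I expect the main obstacle to be the chain-level identification in the third paragraph: matching \eqref{def:Lie} with the chosen basis of $\wt{H}_0(K_I;k)\otimes\otimes_{i\in I}J(A_i)$ under the comparison map, with careful bookkeeping of signs and of the component-ordering convention; the homological reduction of the freeness condition to $H_1$ alone is a secondary technical point resting on the chordality characterization.
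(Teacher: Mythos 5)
Your overall strategy is the paper's: read the generators off $Tor^{A'}_{1,*}(k,k)$ via the change-of-rings isomorphism \eqref{iso:cor} and Theorem \ref{thm:basis}, and reduce freeness to the vanishing of higher $Tor$. Two points in your write-up need repair. First, $A'$ is the cotensor $\underline{A}^{K}\square_{\underline{A}^{\Delta}}k$, not the kernel of the algebra map $\underline{A}^K\to\underline{A}^{\Delta}$; showing that \eqref{def:Lie} dies under the abelianization only places it in the kernel ideal. The paper closes this by observing that the iterated brackets are \emph{primitive} (see \eqref{def:diag}), and a primitive element killed by the projection lies in the cotensor. (Also $\underline{A}^{\Delta}$ is not graded-commutative in general; the brackets vanish there only because their entries come from distinct tensor factors, which happen to graded-commute.) Second, the step you defer -- matching \eqref{def:Lie} with the chosen basis of $\wt{H}_0(K_I;k)\otimes\otimes_{i\in I}J(A_i)$ -- is where essentially all the work lies, and a cardinality count cannot substitute for it. The paper carries it out by invoking \cite[Lemma 3.12]{CMN79} to replace the iterated bracket, modulo boundaries, by $x_{i_1}\cdots x_{i_{t-1}}x_{i_{t+1}}\cdots x_{i_{n-1}}\left[L_{x_{i_n}}(x_{i_t})\right]$ in $\underline{A}^{\Delta}\otimes_{\tau}\bar{B}_1(\underline{A}^K)$, writing down an explicit nullhomotopy that identifies $[L_{x_{i_n}}(x_{i_t})]$ with a chain of the form \eqref{eq:sbsigma} for $I'=\{i_t,i_n\}$, and then using the left $\underline{A}^{\Delta}$-action described in Theorem \ref{thm:basis} to transport this class to a generator of $\wt{H}_0(K_I;k)$.

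On freeness, the paper uses the sharper criterion that a connected graded algebra over a field is free if and only if $Tor_{2,*}$ vanishes, which matches $H_1(K_I;k)=0$ for all $I$ directly. Your route through the vanishing of $Tor_{s,*}$ for all $s\geq 2$ and the chordality characterization of flag complexes (vanishing of $H_1$ on all full subcomplexes forces vanishing of all higher reduced homology) is valid, but it imports a nontrivial combinatorial fact that the paper needs only in the remark following the corollary; you should either prove that fact or switch to the $Tor_2$ criterion.
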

 \begin{proof}
	 It is easy to see that an element of the form \eqref{def:Lie} is primitive in 
	 $\underline{A}^K$ (see \eqref{def:diag}) 
	 whence an element in $A'= A\square_{A''}k$,
	 where $A=\underline{A}^K$ and $A''=\otimes_{i=1}^mA_i$.
	 It is well known that (see for example, \cite[Lemma 3.12]{CMN79}), 
	 the image of the element \eqref{def:Lie}  
	 under the change of ring isomorphism \eqref{eq:cor}, is equivalent to 
	 \begin{equation}
		 x_{i_1}\cdots x_{i_{t-1}}x_{i_{t+1}}\cdots x_{i_{n-1}}
		 [L_{x_{i_{n}}}(x_{i_{t}})]\in A''\otimes_{\tau}\bar{B}_1(A),\label{eq:cor2}
   \end{equation}
   up to a boundary from $A''\otimes_{\tau}\bar{B}_2(A)$. Moreover, in $A''\otimes_{\tau}\bar{B}(A)$ we have  
   \begin{align*}
   [&L_{x_{i_n}}(x_{i_t})]+d((-1)^{|x_{i_n}|}[x_{i_n}|x_{i_t}]-(-1)^{|x_{i_t}|+|x_{i_t}||x_{i_n}|}[x_{i_t}|x_{i_n}])\\
   =&(-1)^{|x_{i_n}|}x_{i_n}[x_{i_t}]-(-1)^{|x_{i_t}|+|x_{i_t}||x_{i_n}|}x_{i_t}[x_{i_n}], 
   \end{align*}
   where the right-hand side is a chain of the form \eqref{eq:sbsigma} associated to a non-trivial generator of
   $\wt{H}_0(K_{I'})$, $I'=\{i_k,i_n\}$. By Theorem \ref{thm:basis}, the left multiplication
   of \[x_{i_1}\cdots x_{i_{t-1}}x_{i_{t+1}}\cdots x_{i_{n-1}}\] sends this generator to a generator
   of $\wt{H}_0(K_{I})$; as $i_t$ runs through the minimal vertex in each connected 
   component of $K_I$ not containing $i_n$, we get a basis of $\wt{H}_0(K_{I})$, giving a basis 
   of $Tor_{1,*}^{A'}(k,k)$. The last statement follows from the 
   fact that the algebra $A'$ is free if and only if $Tor_{2,*}^{A'}(k,k)$ vanishes.
 \end{proof}
  \begin{remark}
 We see that the $1$-skeleton $K^1$ being chordal implies that  
$H_1(K_I)=0$ for every full subcomplex $K_I$ of the flag complex $K$. Conversely, if $H_1(K_I)$ is non-trivial for some full subcomplex 
$K_I$, there exists an embedded simplicial cycle of length $\geq 4$ as a representative, since $K$ (hence $K_I$) is flag, which means that 
$K^1$ is not chordal.   
The last statement of Corollary \ref{Cor:GPTW16} coincides with Corollary \ref{cor:PV}, 
namely $F$ is a wedge sum of suspensions.	

Corollary \ref{Cor:GPTW16} (together with Proposition \ref{prop:exact}) is a generalization of a result of \cite{GPTW16} on the generators of the Pontryagin algebra of the loop space of a moment-angle complex, namely $X_i=\mathbb{C}P^{\infty}$, 
 $i=1,\ldots,m$.	
\end{remark}
\section{Appendix: presentations of an element in a graph product}\label{sec:last}
In this appendix we prove some well-known facts on the structures of polyhedral products in groups and connected Hopf algebras, respectively,
in the form that we need in this paper. 
For more details, the readers are referred to Panov and Veryovkin \cite{PV19}, Hermiller and Meier \cite{HM95} for the first part, and Lemaire \cite{Lem74} for the second part. 

\subsection{On discrete groups}
Throughout this subsection we work in the category of groups.
\begin{prop}\label{prop:preG}
Suppose the $m$ groups $\underline{G}=(G_i)_{i=1}^m$ each have a presentation 
$G_i=\langle S_i\mid R_i\rangle$ generated by $S_i$ and subject to relations $R_i$, $i=1,\ldots,m$.
Then the polyhedral product $\underline{G}^K$ has a presentation
\begin{equation}
      \langle \bigcup_{i=1}^mS_i\mid R_1,\ldots, R_m, (S_i,S_j), \forall \{i,j\}\in K\rangle, \label{def:SR}
\end{equation}
in which $(S_i,S_j)$ is the collection of commutators $s_i^{-1}s_j^{-1}s_is_j$ with $s_i,s_j$ running through all elements of $S_i, S_j$, respectively.  
\end{prop}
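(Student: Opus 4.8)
The plan is to identify the group $\Gamma$ presented by \eqref{def:SR} with the polyhedral product $\underline{G}^K$ by showing that $\Gamma$ satisfies the universal property of the colimit defining $\underline{G}^K$ in $\mathcal{G}$. Write $q_i\co G_i=Z(\{i\})\to\underline{G}^K$ for the canonical structure maps into the colimit, and recall that for a simplex $\sigma\in K$ one has $Z(\sigma)=\prod_{i\in\sigma}G_i$, with the maps $Z(\tau\to\sigma)$ being the evident inclusions of direct factors. Since $\mathcal{G}$ is cocomplete the colimit exists, and it suffices to produce mutually inverse homomorphisms between $\Gamma$ and $\underline{G}^K$.

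First I would construct a homomorphism $\alpha\co\Gamma\to\underline{G}^K$. As $\Gamma$ is given by generators and relations, it is enough to send each generator $s\in S_i$ to $q_i(s)$ and to verify that every defining relation in \eqref{def:SR} is satisfied. The relations $R_i$ hold because each $q_i$ is a homomorphism, while for $\{i,j\}\in K$ the maps $q_i$ and $q_j$ both factor through the canonical map $Z(\{i,j\})=G_i\times G_j\to\underline{G}^K$, inside which the two direct factors commute; hence every commutator in $(S_i,S_j)$ maps to the identity. Thus $\alpha$ is well defined.

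Conversely, I would build $\beta\co\underline{G}^K\to\Gamma$ from the universal property. For each $\sigma\in K$ define $\beta_\sigma\co\prod_{i\in\sigma}G_i\to\Gamma$ by sending each $s\in S_i$ (for $i\in\sigma$) to the corresponding generator of $\Gamma$. The one point requiring attention is well-definedness on the direct product, namely that the images of $G_i$ and $G_j$ commute in $\Gamma$ whenever $i,j\in\sigma$; but any pair $\{i,j\}\subset\sigma$ is an edge of $K$, so the commutators $(S_i,S_j)$ lie among the defining relations of $\Gamma$ and the images do commute. The family $\{\beta_\sigma\}$ is visibly compatible with the structure maps $Z(\tau\to\sigma)$, hence induces a homomorphism $\beta\co\underline{G}^K\to\Gamma$.

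Finally I would check that $\alpha$ and $\beta$ are mutually inverse: both composites fix each element of the generating set $\bigcup_i S_i$ on the nose, and since these elements generate $\Gamma$ and $\underline{G}^K$ respectively, $\alpha$ and $\beta$ are inverse isomorphisms. The only genuinely delicate step is the well-definedness of $\beta_\sigma$ on the direct product $\prod_{i\in\sigma}G_i$; everything else is a formal consequence of the universal property, and the commutativity needed there is precisely what the edge relations $(S_i,S_j)$ supply, once one observes that a simplex of a flag complex has all of its edges in $K$.
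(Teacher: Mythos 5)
Your proposal is correct and is essentially the paper's own argument: the paper likewise verifies the universal property by constructing cone maps $\iota_\sigma\co\prod_{i\in\sigma}G_i\to\Gamma$ (well-defined because the commutator relations $(S_i,S_j)$ for $\{i,j\}\in K$ make the factors commute) and by factoring any compatible family through $\Gamma$ via the presentation, which is exactly the content of your maps $\beta$ and $\alpha$. The only cosmetic difference is that you invoke cocompleteness of $\mathcal{G}$ and exhibit mutually inverse homomorphisms, while the paper checks the universal property directly; note also that the fact you need, namely that every pair of vertices of a simplex $\sigma\in K$ spans an edge of $K$, holds for any simplicial complex and does not use flagness.
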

\begin{proof}
Let $G$ be the group presented by \eqref{def:SR}. It suffices to show that $G$ satisfies 
the universal property of $\underline{G}^K$.  First we define 
$\iota_{\sigma}\co   \prod_{i_k\in\sigma}G_{i_k}\to G $ with 
$\iota_{\sigma}((g_i)_{i\in\sigma})=\prod_{i\in\sigma}g_i$, where $g_i$ is a word with letters from $S_i\cup S_i^{-1}$. 
Clearly $\iota_{\sigma}$ is well-defined, and the diagram 
\[\xymatrix{
	    \prod_{i'_k\in\tau}G_{i'_k} \ar[rr]^{F(\tau\to\sigma)} \ar[rrd]^{\iota_{\tau}} &  &   \prod_{i_k\in\sigma}G_{i_k}\ar[d]^{\iota_{\sigma}}\\
	                   &        &                G
	}
	\]
commutes for every pair $(\sigma,\tau)$ of simplices from $K$, in which 
$F(\tau\to\sigma)$ sends  $\prod_{i'_k\in\tau}G_{i'_k}$ identically onto $\prod_{i_k\in\sigma}H_{i_k}$, 
	where $H_{i_k}=G_{i_k}$ if $i_k\in\tau$, otherwise $H_{i_k}=\langle 1\rangle$ is the trivial group. 
	
Now suppose $G'$ is any group with morphisms 
$f_{\sigma}\co \prod_{i_k\in\sigma}G_{i_k}\to G'$ such that
\[\xymatrix{
	    \prod_{i'_k\in\tau}G_{i'_k} \ar[rr]^{F(\tau\to\sigma)} \ar[rrd]^{f_{\tau}} &  &   \prod_{i_k\in\sigma}G_{i_k}\ar[d]^{f_{\sigma}}\\
	                   &        &                G'
	}
	\]
	commutes for every pair $(\sigma,\tau)$. We define $u\co G\to G'$ so that $u(g_i)=f_i(g_i)$ for all $g_i\in G_i$ (we identify $G_i$ as a subgroup of $\underline{G}^K$ as usual). In this way $u$ is uniquely determined and the universal property holds, once we show that
	it is well-defined, namely $u(R_i)=u((S_i,S_j))=1$, for all possible $i,j$. This is clear from the two commutative diagrams above.
\end{proof}
\begin{remark}
Given morphisms $G_i\to \wt{G}_i$ of groups, $i=1,\ldots,m$, together with a simplicial 
monomorphism $K\to \wt{K}$ of flag complexes with $\mathrm{Vert}(K)=\mathrm{Vert}(\wt{K})$, the universal property induces a unique morphism $\underline{G}^K\to\underline{\wt{G}}^{\wt{K}}$. With the presentations \eqref{def:SR}, 
this morphism sends $(S_i,S_j)$ to $1\in \underline{\wt{G}}^{\wt{K}}$, if $\{i,j\}$ spans an edge in $\wt{K}$.
\end{remark}
With the theorem above we can write $g\in\underline{G}^K$ with $g\not=1$ in a word $g=\prod_{k=1}^ng_{i_k,k}$ with $1\not=g_{i_k,k}\in G_{i_k}$. Such 
a word is locally minimal if any single 
exchange of two adjacent letters, if possible, will make it larger in the lexicographic order. 

Recall that the structure theorem (see \cite{Ser02}) of  
the amalgamated sum $H=H_1*_{H_3}H_2$ of groups $H_1,H_2$, with 
$H_3\subset H_i$ identified as a subgroup, $i=1,2$, states that every $h\in H$ is written uniquely
as
\begin{equation}
h=h_3h_{j_1,1}h_{j_2,2}\cdots h_{j_n,n} \label{def:reduced}
\end{equation}
with $h_3\in H_3$ and $h_{j_k,k}\in C_{j_k}\setminus\{1\}$, $j_k\in\{1,2\}$ for $k=1,\ldots,n$,  such that $j_k\not= j_{k+1}$, $k=1,\ldots, n-1$, 
where $C_i\subset H_i$ is a chosen set of representatives of the right cosets of $H_3$ including the identity $1$, 
namely $H_3\times C_i\to H_i$ sending $(h,x)\to hx$ is a bijection of sets, $i=1,2$.   
\begin{lem}\label{lem:decG}
If $K=K_1\cup_{K_3}K_2$ is the union of flag complexes $K_1,K_2$ along a common flag complex $K_3$, 
then the polyhedral product $\underline{G}^K$ is the amalgamated sum 
$\underline{G}^{K_1}*_{\underline{G}^{K_3}}\underline{G}^{K_2}$.
\end{lem}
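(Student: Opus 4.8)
The plan is to identify both sides as colimits of one and the same diagram and then invoke the relevant universal properties. Recall that $\underline{G}^K$ is by definition the colimit of the functor $Z\co\mathcal{K}\to\mathcal{G}$ with $Z(\sigma)=\prod_{i\in\sigma}G_i$, and that $\underline{G}^{K_1}$, $\underline{G}^{K_2}$, $\underline{G}^{K_3}$ are the colimits of the restrictions of $Z$ to the full subcategories $\mathcal{K}_1,\mathcal{K}_2,\mathcal{K}_3\subset\mathcal{K}$ spanned by the simplices of $K_1,K_2,K_3$. The combinatorial fact underlying everything is that, since $K=K_1\cup_{K_3}K_2$, every simplex of $K$ lies in $K_1$ or in $K_2$, and it lies in both precisely when it belongs to $K_3=K_1\cap K_2$. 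Consequently any inclusion $\tau\subset\sigma$ in $\mathcal{K}$ stays inside $\mathcal{K}_1$ or inside $\mathcal{K}_2$ (if $\sigma\in K_i$ then $\tau\subset\sigma$ forces $\tau\in K_i$), the only morphisms belonging to both subcategories being those internal to $\mathcal{K}_3$; in other words $\mathcal{K}$ is the pushout $\mathcal{K}_1\cup_{\mathcal{K}_3}\mathcal{K}_2$ of categories.

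I would then construct the two comparison morphisms. The canonical inclusions $\underline{G}^{K_i}\to\underline{G}^K$, $i=1,2$, agree after restriction to $\underline{G}^{K_3}$, so the universal property of the amalgam yields a morphism $\Phi\co\underline{G}^{K_1}*_{\underline{G}^{K_3}}\underline{G}^{K_2}\to\underline{G}^K$. Conversely I build a cocone from $Z$ to the amalgam $H$: on a simplex $\sigma$ of $K_1$ send $Z(\sigma)\to\underline{G}^{K_1}\to H$, on a simplex of $K_2$ use $Z(\sigma)\to\underline{G}^{K_2}\to H$, and by the fact above these two prescriptions agree on the simplices of $K_3$, since $H$ identifies the two copies of $\underline{G}^{K_3}$; the compatibility with the morphisms $\tau\subset\sigma$ reduces to the cocone structure already present in $\mathcal{K}_1$ or $\mathcal{K}_2$. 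The universal property of $\underline{G}^K$ then produces $\Psi\co\underline{G}^K\to H$. Checking $\Phi\Psi=\mathrm{id}$ and $\Psi\Phi=\mathrm{id}$ is routine, as both composites restrict to the identity on each $G_i$ and all groups in sight are generated by the images of the $G_i$.

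Equivalently, and perhaps more transparently, one can compare presentations through Proposition \ref{prop:preG}: $\underline{G}^K$ is presented by $\langle\bigcup_i S_i\mid R_i,\ (S_i,S_j)\ \forall\{i,j\}\in K\rangle$, whereas the amalgam, being a pushout in $\mathcal{G}$, is presented by the union of the presentations of $\underline{G}^{K_1}$ and $\underline{G}^{K_2}$ with the generators of $\underline{G}^{K_3}$ identified; since an edge $\{i,j\}$ lies in $K$ exactly when it lies in $K_1$ or in $K_2$, these two presentations coincide verbatim, giving the isomorphism at once.

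The main obstacle is not the isomorphism itself but ensuring that the right-hand side is a genuine amalgam, i.e. that the structural maps $\underline{G}^{K_3}\to\underline{G}^{K_i}$ are monomorphisms, which is exactly what is needed to apply Theorem \ref{thm:Whitehead} in the proof of Theorem \ref{thm:weakequi}. I would derive this from the uniqueness part of the structure theorem (Theorem \ref{thm:normal}): because $K_3$ is a full subcomplex of $K_i$, a locally minimal presentation of an element of $\underline{G}^{K_3}$ remains locally minimal when read in $\underline{G}^{K_i}$, so distinct elements stay distinct and the inclusion is injective. The normal form \eqref{def:reduced} for the amalgam then matches the locally minimal presentations of $\underline{G}^K$ under $\Phi$, confirming on the nose that $\Phi$ is a bijection and completing the identification.
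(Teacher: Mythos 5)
Your core argument is the same as the paper's: Lemma \ref{lem:decG} is proved there by comparing the presentation \eqref{def:SR} of $\underline{G}^K$ with that of the pushout, constructing morphisms in both directions that are the identity on the generators $S_i$, and observing they are mutually inverse. Both your colimit-of-diagrams formulation and your presentation comparison are correct and match this.

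One caveat about your final paragraph. You justify the injectivity of $\underline{G}^{K_3}\to\underline{G}^{K_i}$ by appealing to the uniqueness of locally minimal presentations in Theorem \ref{thm:normal}; but in the paper that theorem is itself proved by induction \emph{using} Lemma \ref{lem:decG}, so as stated your argument is circular (unless you set up the whole package as a simultaneous induction on the number of vertices, which you do not do). A non-circular route, and the one the paper uses for the Hopf-algebra analogue in Lemma \ref{lem:decA}, is to note that the presentation \eqref{def:SR} gives a well-defined retraction $\underline{G}^{K_i}\to\underline{G}^{K_3}$ killing the generators $S_j$ with $\{j\}\notin K_3$, which is a left inverse to the inclusion. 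Since the pushout of groups exists regardless of injectivity, this point does not affect the identification $\underline{G}^K\cong\underline{G}^{K_1}*_{\underline{G}^{K_3}}\underline{G}^{K_2}$ itself, only the later applications (Theorem \ref{thm:Whitehead}, the normal form \eqref{def:reduced}) that require the structural maps to be monomorphisms.
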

\begin{proof}
	From the presentation \eqref{def:SR}, the morphisms 
	$\underline{G}^{K_i}\to \underline{G}^K$ determined by 
	sending generators $S_i\subset G_i$ identically to $S_i\subset\underline{G}^K$ 
	are well-defined, and the diagram
	\[\begin{CD}
			\underline{G}^{K_3}@>>> \underline{G}^{K_1}\\
			@VVV                    @VVV\\
			\underline{G}^{K_2}@>>>\underline{G}^{K}
	\end{CD}
	\]
	is commutative, whence a unique morphism 
	$\alpha\co 
	\underline{G}^{K_1}*_{\underline{G}^{K_3}}\underline{G}^{K_2}\to\underline{G}^K$. To see
	that it is an isomorphism, we construct an inverse $\beta\co\underline{G}^K\to 
	\underline{G}^{K_1}*_{\underline{G}^{K_3}}\underline{G}^{K_2}$ similarly 
	by sending $S_i$ to itself. By checking the relations it is well-defined. The structure 
	theorem of amalgams, together with the presentation \eqref{def:SR}, imply that they are 
	inverse to each other.
\end{proof}

\begin{proof}[Proof of Theorem \ref{thm:normal}]
The proof is an induction on the number $m$ of vertices of $K$.
When $K$ is a simplex, $\underline{G}^K=\prod_{i=1}^m G_i$ and we have nothing to prove.  
Otherwise we decompose $K=K_1\cup_{K_3} K_2$ as a non-trivial union of flag subcomplexes which are full in $K$, by Lemma \ref{lem:L11}. 
By Lemma \ref{lem:decG}, $\underline{G}^K=\underline{G}^{K_1}*_{\underline{G}^{K_3}}\underline{G}^{K_2}$, and 
we suppose the statement holds for $\underline{G}^{K_i}$, $i=1,2,3$. We relabel the vertices if necessary, 
so that  $v_3<v_1<v_2$, for every $v_3\in V_3$ and $v_i\in V_i\setminus V_3$, $i=1,2$, 
where $V_i=\mathrm{Vert}(K_i)$ (notice that $V_3,V_2\setminus V_3,V_1\setminus V_3$ give 
a partition of $\mathrm{Vert}(K)=\{1,\ldots,m\}$). Moreover, from the proof of Lemma \ref{lem:L11}, we further assume that 
\begin{equation}
\{v_2\}=V_2\setminus V_3 \label{dec:V2}
\end{equation}
where $K_3$ is the link of $v_2$ in $K$, so that $v_2$ is not connected to any vertex from $V_1\setminus V_3$.  The induction hypothesis implies that
every word $h\in\underline{G}^K$ is in the form \eqref{def:reduced}.
Together with \eqref{dec:V2}, more explicitly we have  
\[
   h=h_3g_{v_2, 1}h_{1,2}g_{v_2,3}h_{1,4}\cdots \ \text{ or } \ h=h_3h_{1,1}g_{v_2,2}h_{1,3}g_{v_2,4}\cdots,
\]
with $g_{v_2,*}\in G_{v_2}\setminus\{1\}$, $h_{1,*}\in C_1\setminus\{1\}\subset \underline{G}^{K_1}$, where $C_1$ is a collection of right cosets of $\underline{G}^{K_3}$ in $\underline{G}^{K_1}$ in which an element $h_{1,*}$ is locally minimal, in particular, it is of the minimal word length among all elements in the orbit $\underline{G}^{K_3} \cdot h_{1,*}$.
 To show that $h$ is reduced, notice that by operations of exchanging adjacent letters, it is impossible to move a letter of the form $g\in G_{v_3}$, $v_3\in V_3$, from the middle of $h_{1,*}$ backward to the beginning, namely $h_{1,*}=g h'_{1,*}$, since otherwise its word length is not minimal in $\underline{G}^{K_3} \cdot h_{1,*}$. Therefore the only way to make $h$ shorter is to move two letters $g_i,g_j\in \underline{G}^{K_3}$ from $h_{1,i},h_{1,j}$, respectively, forward to some $h_{1,k}$ with $k\geq i,j$, and then merge them there (so that $g_i,g_j$ come from the same $G_s$ with $s\in V_3$). However, we have $i\not=j$ since each $h_{1,*}$ is already reduced; suppose $i<j$, without loss of generality, the moving of $g_i,g_j$ above implies that we can equally move $g_j$ backward to $h_{1,i}$ and merge $g_i$ there, hence at some moment we would have $h_{1,j}=g_jh'_{1,j}$, a contradiction.      
To see that $h$ is locally minimal in $\underline{G}^K$, since each word $h_{1,*}$ is already locally minimal, the only possible way to make $h$ smaller is an exchange of the last letter $x$ of $h_{1,k}$, $x\in \underline{G}^{K_3}$, and $y=g_{v_2,k+1}$, for some $k$. However, we have $xy<yx$ by induction hypothesis, namely $h$ becomes larger after this operation. As a conclusion, the induction is completed.

The uniqueness follows from the 
induction and the uniqueness of \eqref{def:reduced}. For the last statement on the multiplication 
of two elements, notice that if we call the sequence
$(i_k)_{k=1}^n$ as the type of the element $g=\prod_{i=k}^ng_{i_k,k}$, $g_{i_k,k}\in G_{i_k}$, 
the two operations will change the type, while 
the lexicographic order only depends on the type. It is easy to check that for a type of 
bounded length, we have only finitely many locally minimal types. 
\end{proof}

\subsection{Connected Hopf Algebras}
In this part we work in the category $\mathcal{H}_k^c$ 
of \emph{connected} Hopf algebras over a field $k$. For an object 
$A=A(\varphi,\Delta,\epsilon,\eta)$ in $\mathcal{H}$
we shall denote $J(A)$ as the augmentation ideal as well as the coaugmentation ideal.

To make everything explicit we specify a homogeneous basis $\mathcal{B}_A=\{c_{i}\}_{i \in\alpha}$ containing 
$c_0=1$, so that all information
is contained in the structure equations
\[
   \quad c_ic_j=\sum_{l}a_{ij}^lc_l,
\]
from the multiplication $\varphi$, together with 
\[
 \Delta c_i=\sum_{j,l}b^{jl}_ic_j\otimes c_l
\]
from the comultiplication $\Delta$, so that the coefficients $b^{jl}_i,a_{ji}^l\in k$ satisfy several equations from the axioms of a Hopf algebra. Let 
$S_A=\mathcal{B}_{A}\setminus\{1\}$ and denote by $k\langle S_A\rangle$ the free associative algebra over
$k$. Then we get the presentation 
\[
      A=\langle S_A\mid I_{A} \rangle
\]
as $k\langle S_A\rangle$ quotient by the two sided Hopf ideal $I_A$ (i.e., $\Delta I_A\subset I_A\otimes A+A\otimes I_A$), such that $\Delta$ is a morphism of algebras. 

\begin{thm}\label{thm:prepolyHopf}
Let $\underline{A}^K$ be the 
polyhedral product of Hopf algebras $\underline{A}=(A_i)_{i=1}^m$ with respect to a flag
complex $K$. Suppose that a basis $S_{A_i}$ is specified for each $A_i$, so that 
$A_i=\langle S_{A_i}\mid I_{A_i} \rangle$, $i=1,\ldots,m$. Then $\underline{A}^K$ has a presentation
\begin{equation}
 \langle \bigcup_{i=1}^m S_{A_i}\mid I_{A_1},\ldots, I_{A_m}, [S_{A_i},S_{A_j}], \forall\{i,j\}\in K\rangle
 \label{def:A'}
\end{equation}
where $[S_{A_i},S_{A_j}]$ is the two-sided ideal generated by $[s_i,s_j]=s_is_j-(-1)^{|s_i||s_j|}s_js_i$, with $s_i,s_j$
running through all elements from $S_{A_i},S_{A_j}$, respectively. $\underline{A}^K$ is cocommutative if each
$A_i$ is.
\end{thm}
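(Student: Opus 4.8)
The plan is to follow the strategy of Proposition \ref{prop:preG}: let $A$ denote the algebra presented by \eqref{def:A'} and show that $A$, suitably endowed with a coalgebra structure, is a connected Hopf algebra satisfying the universal property that characterizes the colimit $\underline{A}^K$. First I would make $A$ into a bialgebra. On the free associative algebra $T=k\langle\bigcup_i S_{A_i}\rangle$, define a comultiplication $\Delta$ and counit $\epsilon$ by declaring, on each generator $s\in S_{A_i}$, that $\Delta(s)$ and $\epsilon(s)$ agree with those of $A_i$, and extending multiplicatively; coassociativity and the counit identities then hold because both sides of each axiom are algebra maps on $T$ agreeing on generators. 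Since all generators sit in positive degree and all relations in \eqref{def:A'} are homogeneous, $A=T/I$ is non-negatively graded with $A_0=k$, hence connected; a connected graded bialgebra over a field is automatically a Hopf algebra, so it remains only to check that the defining ideal $I$ is a biideal.

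The main obstacle is verifying that $I=(I_{A_1},\dots,I_{A_m},[S_{A_i},S_{A_j}])$ is a coideal, i.e.\ $(\pi\otimes\pi)\Delta(I)=0$ where $\pi\colon T\to A$. For the generators of each $I_{A_i}$ this is immediate, because the comultiplication of every $s\in S_{A_i}$ lies in $A_i\otimes A_i$, so the subalgebra of $T$ generated by $S_{A_i}$ is a sub-bialgebra on which $\Delta$ restricts to that of $A_i$, where $I_{A_i}$ is already a coideal. The subtle point is the graded commutators $[s_i,s_j]$ with $\{i,j\}\in K$. Writing $\Delta(s_i)=\sum s_i'\otimes s_i''$ and $\Delta(s_j)=\sum s_j'\otimes s_j''$ with homogeneous components in $A_i,A_j$, and computing $\Delta[s_i,s_j]$ as an algebra map into the graded tensor square, one rearranges the subtracted summand using the relations $[s_i',s_j']=[s_i'',s_j'']=0$ that hold modulo $I$. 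A bookkeeping of Koszul signs, in which the accumulated exponent $|s_i||s_j|+|s_j''||s_i'|+|s_i'||s_j'|+|s_i''||s_j''|$ collapses modulo $2$ to exactly $|s_i''||s_j'|$, shows the two summands of $\Delta[s_i,s_j]$ cancel in $A\otimes A$, so $\Delta[s_i,s_j]\in I\otimes T+T\otimes I$. This sign cancellation is the crux of the argument.

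Having established that $A$ is a connected Hopf algebra, I would verify the universal property exactly as in the group case. For $\sigma\in K$ define $\iota_\sigma\colon\bigotimes_{i\in\sigma}A_i\to A$ by sending a tensor to the ordered product of its components; since $\sigma\in K$ forces $\{i,j\}\in K$ for all $i,j\in\sigma$, the subalgebras $A_i$ graded-commute in $A$, which makes $\iota_\sigma$ an algebra map out of the graded tensor product (the accumulated Koszul sign from reordering the product matches that of the tensor multiplication), and it is a coalgebra map because the comultiplications match, hence a morphism in $\mathcal{H}_k^c$; the compatibility $\iota_\sigma\circ Z(\tau\to\sigma)=\iota_\tau$ is clear since inserting units leaves the product unchanged. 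Given any connected Hopf algebra $B$ with a compatible family $f_\sigma\colon\bigotimes_{i\in\sigma}A_i\to B$, define $u\colon A\to B$ on generators by $u|_{A_i}=f_{\{i\}}$. Then $u$ kills each $I_{A_i}$ because $f_{\{i\}}$ is an algebra map out of $A_i$, and it kills each commutator $[s_i,s_j]$, $\{i,j\}\in K$, because $f_{\{i,j\}}$ is an algebra map from the graded tensor product $A_i\otimes A_j$, in which $s_i\otimes 1$ and $1\otimes s_j$ graded-commute; so $u$ descends to a Hopf algebra map $A\to B$, unique since it is prescribed on generators. This identifies $A$ with $\underline{A}^K$.

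Finally, for cocommutativity, suppose each $A_i$ is cocommutative. Then the two algebra maps $\Delta,\ \tau\circ\Delta\colon A\to A\otimes A$, with $\tau$ the graded twist, agree on every algebra generator $s\in S_{A_i}$, since $\Delta(s)\in A_i\otimes A_i$ is $\tau$-symmetric by cocommutativity of $A_i$; two algebra maps agreeing on a generating set coincide, so $\tau\circ\Delta=\Delta$ and $A=\underline{A}^K$ is cocommutative. I expect the sign analysis in the coideal verification to be the only genuinely delicate step; the universal property and cocommutativity follow the familiar template of Proposition \ref{prop:preG} with graded signs inserted.
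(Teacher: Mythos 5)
Your proposal is correct and follows essentially the same route as the paper: present $A$ by \eqref{def:A'}, extend the comultiplications of the $A_i$ multiplicatively, verify that the graded commutators generate a coideal via exactly the Koszul-sign collapse $|s_i||s_j|+|s_j''||s_i'|+|s_i'||s_j'|+|s_i''||s_j''|\equiv|s_i''||s_j'|\pmod 2$ (the paper phrases this as the identity $\Delta[s_i,s_j]=\sum(\pm)\bigl([c_l,c_{l'}]\otimes c_tc_{t'}+(\pm)c_{l'}c_l\otimes[c_t,c_{t'}]\bigr)$ rather than as a cancellation in the quotient, but the computation is the same), and then check the universal property on generators as in Proposition \ref{prop:preG}. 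No gaps.
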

\begin{proof}
Let $A$ be the algebra given by \eqref{def:A'}. It can be checked directly that the diagonal 
$\Delta\co A\to A\otimes A$ satisfies the comultiplication  axiom  in $A$, which is given by
\begin{equation}
 \Delta (\varphi(c_{i_1,1}\otimes c_{i_2,2}\otimes \cdots \otimes c_{i_n,n}))= 
 (\varphi\otimes\varphi)\circ T(\bigotimes_{k=1}^n \Delta c_{i_k,k})\label{def:diag}
\end{equation}
with $\varphi\co k\langle \bigcup_{i=1}^m S_{A_i}\rangle\to A$ 
the multiplication in $A$ and 
\[
   T\co
   \bigotimes_{k=1}^nA_{i_k}\otimes A_{i_k}\to \otimes_{k=1}^nA_{i_k}\bigotimes \otimes_{k=1}^nA_{i_k}
\]
the shuffle map sending a homogeneous element $\otimes_{k=1}^n(s'_{i_k}\otimes s''_{i_k})$ to 
$(-1)^{\nu}\otimes_{k=1}^n s'_{i_k}\otimes\otimes_{k=1}^n s''_{i_k} $, where $\nu=\sum_{k=1}^n \deg s''_{i_k}
\sum_{k'>k}\deg s'_{i_{k'}}$. It is straightforward to check that 
$A$ is cocommutative, if each $A_i$ is.

To see that $A$ is a Hopf algebra, it suffices to 
check the two-sided ideal $I_{ij}$ generated by $[S_{A_i},S_{A_j}]$ satisfies 
\begin{equation}
\Delta I_{ij}\subset I_{ij}\otimes A+A\otimes I_{ij}.\label{ideal:Hopf}
\end{equation} 
We write $\Delta (s_i)=\sum_{l,t}b_i^{lt}c_l\otimes  c_t\subset A_i\otimes A_i$ 
and $\Delta(s_j)=\sum_{l',t'}b_j^{l't'}c_{l'}'\otimes  c_{t'}'\in A_j\otimes A_j$.
Then 
\begin{align*}\Delta ([s_i,s_j])&=\sum_{l,l',t,t'}b_i^{lt}b_j^{l't'}(-1)^{|c_{l'}||c_t|}\left(c_lc_{l'}\otimes  c_tc_{t'}-(-1)^{|c_{l}||c_{l'}|+|c_{t}||c_{t'}|}c_{l'}c_{l}\otimes  c_{t'}c_{t}\right)\\
&=\sum_{l,l',t,t'}b_i^{lt}b_j^{l't'}(-1)^{|c_{l'}||c_t|}\left([c_l,c_{l'}]\otimes  c_tc_{t'}+(-1)^{|c_{l}||c_{l'}|}c_{l'}c_{l}\otimes [c_{t},c_{t'}]\right),
\end{align*}
hence \eqref{ideal:Hopf} holds. We see that $A$ is a well-defined Hopf algebra and it remains 
to check the universal property as a polyhedral product. 
First we define 
$\iota_{\sigma}\co   \bigotimes_{i\in\sigma}A_{i}\to A $ with 
$\iota_{\sigma}(\otimes_{i\in\sigma}c_i)=\varphi(\otimes_{i\in\sigma}c_i)$. Notice that by definition,
\[\bigotimes_{i\in\sigma}A_{i}=\langle \cup_{i\in\sigma} S_i\mid I_{A_i}, [S_i,S_j],\ \forall\{i,j\}\subset\sigma\rangle,\]
hence $\iota_{\sigma}$ is well-defined, and the diagram 
\[\xymatrix{
	    \bigotimes_{i'\in\sigma'}A_{i'} \ar[rr]^{F(\sigma'\to\sigma)} \ar[rrd]^{\iota_{\sigma'}} &  &   \bigotimes_{i\in\sigma}A_{i}\ar[d]^{\iota_{\sigma}}\\
	                   &        &                A
	}
	\]
commutes for every pair $(\sigma,\sigma')$ of simplices from $K$, in which 
$F(\sigma'\to\sigma)$ sends  $\bigotimes_{i'\in\sigma'}A_{i'}$ identically onto its image $\bigotimes_{i\in\sigma}H_{i}$, 
	where $H_{i'}=A_{i'}$ if $i'\in\sigma'$, otherwise $H_{i}=k\langle 1\rangle$ is the trivial Hopf algebra. 
	
	Now suppose $A'$ is any object in $\mathcal{H}_k^c$, 
with morphisms $f_{\sigma}\co \bigotimes_{i\in\sigma}A_{i}\to A'$ such that
\[\xymatrix{
	    \bigotimes_{i'\in\sigma'}A_{i'} \ar[rr]^{F(\sigma'\to\sigma)} \ar[rrd]^{f_{\sigma'}} &  &   \bigotimes_{i\in\sigma}A_{i}\ar[d]^{f_{\sigma}}\\
	                   &        &                A'
	}
	\]
	commutes for every pair $(\sigma,\sigma')$. We define $u\co A\to A'$ so that $u(s_i)=f_i(s_i)$ for all $s_i\in S_i\subset A_i$. In this way $u$ is uniquely determined and the universal property holds, once we show that
	it is well-defined, namely $u([S_i,S_j])=1$ for all $\{i,j\}\in K$. 
	This is clear from the two commutative diagrams above.
\end{proof}
\begin{remark}
	Given morphisms $A_i\to \wt{A}_i$ in $\mathcal{H}_k^c$, 
	$i=1,\ldots,m$, together with a simplicial 
inclusion $K\to \wt{K}$ of flag complexes with the same vertices, the universal property induces a 
unique morphism $\underline{A}^K\to\underline{\wt{A}}^{\wt{K}}$. With the presentations \eqref{def:A'}, 
this morphism sends $[S_i,S_j]$ to $0\in \underline{\wt{A}}^{\wt{K}}$, if $\{i,j\}$ spans an edge in $\wt{K}$.
\end{remark}
\begin{lem}\label{lem:decA}
If $K=K_1\cup_{K_3}K_2$ is the union of flag complexes $K_1,K_2$ along  a common flag complex $K_3$, 
then the polyhedral product $\underline{A}^K$ is isomorphic to the colimit
$\underline{A}^{K_1}\coprod_{\underline{A}^{K_3}}\underline{A}^{K_2}$.
  \end{lem}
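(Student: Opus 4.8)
The plan is to mirror the proof of Lemma~\ref{lem:decG} for groups, replacing the amalgamated product by the pushout in $\mathcal{H}_k^c$ and the free-product presentation by the presentation of Theorem~\ref{thm:prepolyHopf}. Write $V_i=\mathrm{Vert}(K_i)$, so that $V=V_1\cup V_2$ and $V_3=V_1\cap V_2=\mathrm{Vert}(K_3)$. Each $K_i$ is flag, so by Theorem~\ref{thm:prepolyHopf} the Hopf algebra $\underline{A}^{K_i}$ is presented by the generators $\bigcup_{j\in V_i}S_{A_j}$ modulo the ideals $I_{A_j}$ (for $j\in V_i$) together with the commutators $[S_{A_j},S_{A_l}]$ for $\{j,l\}\in K_i$. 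Since $K_3\subset K_i$ is a full subcomplex, the structure theorem (Theorem~\ref{thm:Hopfmain}) shows that the induced map $\underline{A}^{K_3}\to\underline{A}^{K_i}$ is a monomorphism, so the pushout $\underline{A}^{K_1}\coprod_{\underline{A}^{K_3}}\underline{A}^{K_2}$ exists in $\mathcal{H}_k^c$ by Theorem~\ref{thm:LS}.

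First I would build a morphism $\alpha$ \emph{into} $\underline{A}^K$. The inclusions of generating sets determine Hopf-algebra morphisms $\underline{A}^{K_i}\to\underline{A}^K$, $i=1,2,3$: every defining relation of $\underline{A}^{K_i}$ is again a relation of $\underline{A}^K$ because $K_i\subset K$, and the diagonal is respected since it is forced on generators by \eqref{def:diag}. These maps agree on the image of $\underline{A}^{K_3}$, so together they form a commuting cocone on the span $\underline{A}^{K_1}\leftarrow\underline{A}^{K_3}\rightarrow\underline{A}^{K_2}$, and the universal property of the pushout yields a unique morphism $\alpha\co\underline{A}^{K_1}\coprod_{\underline{A}^{K_3}}\underline{A}^{K_2}\to\underline{A}^K$ in $\mathcal{H}_k^c$.

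Conversely I would define $\beta$ \emph{out of} $\underline{A}^K$ from its presentation \eqref{def:A'}: send each generator $s\in S_{A_j}$ to the image of $s$ in the pushout, via $\underline{A}^{K_1}$ when $j\in V_1$ and via $\underline{A}^{K_2}$ when $j\in V_2$ (these choices agree on $V_3$). To see that $\beta$ is well defined I must check that it kills every relation of $\underline{A}^K$. The ideals $I_{A_j}$ already vanish inside $\underline{A}^{K_1}$ or $\underline{A}^{K_2}$. For a commutator relation $[S_{A_j},S_{A_l}]$ with $\{j,l\}\in K$, the key combinatorial input is that $\{j,l\}$, being a simplex of $K=K_1\cup_{K_3}K_2$, is a simplex of $K_1$ or of $K_2$; hence the corresponding commutator already vanishes in $\underline{A}^{K_1}$ or $\underline{A}^{K_2}$, and therefore in the pushout. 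As before, \eqref{def:diag} guarantees that $\beta$ is a morphism in $\mathcal{H}_k^c$.

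Finally, $\alpha$ and $\beta$ are mutually inverse, since both restrict to the identity on the generating set $\bigcup_{j\in V}S_{A_j}$, which generates $\underline{A}^K$ as well as the pushout (the latter being generated by the images of $\underline{A}^{K_1}$ and $\underline{A}^{K_2}$). The main obstacle is the verification that $\beta$ respects the relations, and this reduces entirely to the observation that a flag complex written as $K_1\cup_{K_3}K_2$ has no edge joining $V_1\setminus V_3$ to $V_2\setminus V_3$, so that every edge of $K$ lies in $K_1$ or $K_2$; compatibility with the coalgebra structure is automatic from \eqref{def:diag}. I note that one could instead argue abstractly that the face category $\mathcal{K}$ is the pushout $\mathcal{K}_1\cup_{\mathcal{K}_3}\mathcal{K}_2$ of categories and invoke the interchange of colimits, but the presentation argument above stays closest in spirit to Lemma~\ref{lem:decG}.
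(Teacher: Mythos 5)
Your overall route---build $\alpha$ out of the pushout via its universal property, build $\beta$ out of $\underline{A}^K$ via the presentation \eqref{def:A'}, and check that the commutators $[S_{A_j},S_{A_l}]$ for $\{j,l\}\in K$ die in the pushout because every simplex of $K=K_1\cup_{K_3}K_2$ already lies in $K_1$ or $K_2$---is exactly the paper's argument (which in turn mirrors Lemma \ref{lem:decG}). However, there is one step where your justification is circular: you establish that $\underline{A}^{K_3}\to\underline{A}^{K_i}$ is a monomorphism by appealing to the structure theorem, Theorem \ref{thm:Hopfmain}. That theorem is proved later by an induction whose inductive step invokes the very lemma you are proving (``We have $\underline{A}^K=\underline{A}^{K_1}\coprod_{\underline{A}^{K_3}}\underline{A}^{K_2}$ by Lemma \ref{lem:decA}''), so you cannot use it here without at least setting up a careful simultaneous induction on the number of vertices---and even then you would still need to argue that the locally minimal basis of $\underline{A}^{K_3}$ injects into that of $\underline{A}^{K_i}$ under a full subcomplex inclusion, which you do not do. Injectivity is not cosmetic: the pushout in $\mathcal{H}_k^c$ is only asserted to exist (via \cite{MS68} and Theorem \ref{thm:LS}) for a span of monomorphisms, so this is load-bearing.

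The paper closes this gap with a one-line retraction argument that you should adopt: the presentation \eqref{def:A'} shows that sending $s_j\mapsto 0$ for every generator $s_j\in S_{A_j}$ with $\{j\}\notin K_3$ defines a Hopf algebra morphism $\underline{A}^{K_i}\to\underline{A}^{K_3}$ (all defining relations are visibly preserved), and this is a left inverse to $\underline{A}^{K_3}\to\underline{A}^{K_i}$, whence the latter is injective. With that substitution the rest of your proof stands; the remaining verifications ($\alpha$ and $\beta$ agree with the identity on generators, hence are mutually inverse, and compatibility with the diagonal is forced by \eqref{def:diag}) match the paper.
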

\begin{proof}
Notice that on the set of vertices we have $\mathrm{Vert}(K_3)=\mathrm{Vert}(K_1)\cap\mathrm{Vert}(K_2)$. 
The presentation \eqref{def:A'} implies that $\underline{A}^{K_3}\to\underline{A}^{K_i}$, $i=1,2,$, are well-defined. To 
see that they are monomorphisms, consider the projections $\underline{A}^{K_i}\to\underline{A}^{K_3}$ sending $s_j=0$, 
for all $s_j\in S_j$ with $\{j\}\not\in K_3$. Again the presentation \eqref{def:A'} implies that these projections are well-defined, each giving a left inverse to $\underline{A}^{K_3}\to\underline{A}^{K_i}$. Finally the universal property implies
a morphism 
$\underline{A}^{K_1}\coprod_{\underline{A}^{K_3}}\underline{A}^{K_2}\to\underline{A}^K$, 
which is an isomorphism by constructing an inverse that is identical on $S_i$, as in the proof
of Lemma \ref{lem:decG}. 
\end{proof}

\begin{defin}\label{def:order2}
	A basis element  of $A$ of the form
	$a=\prod_{k=1}^ns_{i_k,k}$ with letters $s_{i_k,k}\in S_{i_k}$ is \emph{reduced}
(the identity $1$ is the empty word), 
if the two operations of I) 
exchanging $s_{i_k,k}s_{i_{k+1},k+1}$ into $s_{i_{k+1},k+1}s_{i_k,k}$, when 
$\{i_k,i_{k+1}\}\in K$ and II) merging $s_{i_{k},k}s_{i_{k+1},k+1}$ into a linear sum
$\sum_tk_tg^t_{i_{k},k}$ with $k_t\in k$ and $g^t_{i_{k},k}$ a product of basis elements from $S_{i_k}$ (now $i_k=i_{k+1}$), will not make $a$ into a linear sum of product of basis elements of shorter length. 
The \emph{lexicographic partial order} on reduced words 
	is given by rules 
	1)$0<1<s_{1}<s_{2}<\ldots<s_{m}$ for every $s_i\in S_i$, 
and 2) two reduced element
	$a=\prod_{k=1}^ns_{i_k,k}<a'=\prod_{k=1}^{n'}s'_{i_{k}',k}$, 
	if there exists an index $j\leq n$, 
	such that $s_{i_k,k}=s'_{i_{k}',k}\in S_{i_k}$ 
	for $k\leq j$ while $i_{j+1}<i_{j+1}'$ (if $j=n$, this means
	 $n'>n$ and the first $n$ letters of $a'$ coincide with that of $a$).
	
	A reduced element 
is \emph{locally minimal} if a single
operation I), whenever possible,  
will make it larger in the partial order above. 
\end{defin}
 
  \begin{thm}\label{thm:Hopfmain}
  The polyhedral product $\underline{A}^K$ associated to $m$ objects $\underline{A}=(A_i)_{i=1}^m$
  in $\mathcal{H}_k^c$ and a flag complex $K$ has a linear basis consisting of all locally 
  minimal elements of the form
   $a=\prod_{k=1}^ns_{i_k,k}$, with $s_{i_k,k}\in S_{i_k}$.
  \end{thm}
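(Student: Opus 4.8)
The plan is to argue by induction on the number $m$ of vertices of $K$, transcribing the proof of Theorem \ref{thm:normal} into the linear setting of $\mathcal{H}_k^c$. When $K=\Delta$ is a simplex we have $\underline{A}^K=\otimes_{i=1}^m A_i$, and the asserted basis is exactly the tensor-product basis $\{\otimes_{i=1}^m c_i\mid c_i\in\mathcal{B}_{A_i}\}$; since all indices are distinct such words are automatically reduced and locally minimal, which settles the base case. For the inductive step I would use Lemma \ref{lem:L11} to write $K=K_1\cup_{K_3}K_2$ as a non-trivial union of full flag subcomplexes, and Lemma \ref{lem:decA} to identify $\underline{A}^K$ with the amalgamated coproduct $\underline{A}^{K_1}\coprod_{\underline{A}^{K_3}}\underline{A}^{K_2}$ in $\mathcal{H}_k^c$.

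The key external input is the Lemaire structure theorem (Theorem \ref{thm:LS}), which provides an explicit linear basis of such an amalgam in the reduced form \eqref{def:right}; the induction hypothesis supplies the required bases for the three factors $\underline{A}^{K_i}$, $i=1,2,3$. I would then perform the same relabeling of vertices as in the group case, arranging $v_3<v_1<v_2$ for $v_3\in V_3$, $v_i\in V_i\setminus V_3$, and using the sharper conclusion of Lemma \ref{lem:L11} to assume $V_2\setminus V_3=\{v_2\}$ with $K_3$ the link of $v_2$, so that $\underline{A}^{K_2}=A_{v_2}\otimes\underline{A}^{K_3}$ and $v_2$ is joined to no vertex of $V_1\setminus V_3$. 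Choosing the $\underline{A}^{K_3}$-coset representatives inside $\underline{A}^{K_1}$ to be locally minimal words (available by induction), a reduced basis element of the amalgam becomes an explicit product in which letters from $A_{v_2}$ alternate with these coset representatives.

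The bulk of the work is to check that each such product is reduced and locally minimal in the sense of Definition \ref{def:order2}, and conversely that every locally minimal word arises this way. This is the line-by-line analogue of the argument in the proof of Theorem \ref{thm:normal}: one shows that no sequence of operations I) and II) can shorten the word, since shortening would force a letter of $\underline{A}^{K_3}$ to be movable to the front of a coset representative, contradicting local minimality of the latter; and that any single admissible exchange I) strictly raises the word in the lexicographic order. The finiteness remark at the end of that proof---that for each fixed type only finitely many locally minimal types occur---carries over verbatim and guarantees that the reduction terminates.

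The one genuinely new phenomenon, and where I expect the main difficulty, is operation II): merging two adjacent letters $s_{i,k}s_{i,k+1}$ from the same $A_i$ replaces them not by a single letter but by the linear combination $\sum_t k_t g^t$ dictated by the structure constants of $A_i$, so reducing an arbitrary product $\prod_k s_{i_k,k}$ produces a \emph{linear combination} of locally minimal words. Spanning then follows by repeatedly applying I) and II). Linear independence is the crux, and I would obtain it from Theorem \ref{thm:LS}: under the identification above the locally minimal words correspond bijectively to the reduced basis elements of $\underline{A}^{K_1}\coprod_{\underline{A}^{K_3}}\underline{A}^{K_2}$, so their independence is inherited from the inductive basis. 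The care needed is to verify that this bijection is order-compatible, so that the (triangular) change of basis between the naive products and the locally minimal words is genuinely invertible.
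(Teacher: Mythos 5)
Your proposal follows essentially the same route as the paper's own proof: induction on the number of vertices via Lemma \ref{lem:L11}, identification of $\underline{A}^K$ with the amalgam $\underline{A}^{K_1}\coprod_{\underline{A}^{K_3}}\underline{A}^{K_2}$ by Lemma \ref{lem:decA}, the Lemaire--Serre structure theorem (Theorem \ref{thm:LS}) for the basis of the amalgam, and a transcription of the local-minimality argument from Theorem \ref{thm:normal}, with the correct observation that operation II) now produces linear combinations rather than single words. The approach is sound and matches the paper's argument in all essentials.
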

  We recall some facts before the proof of this theorem.
  Let $ A_3\to A_i$, $i=1,2$, be two monomorphisms of $\mathcal{H}_k^c$.
	The colimit $A=A_1\coprod_{A_3}A_2$ of the diagram $A_1\leftarrow A_3\rightarrow A_2$ is 
	well-defined (see \cite{MS68}), and 
        \[
		C_i=k\otimes_{A_3}A_i 
	\]
	inherits the structure from $A_i$ to be a coalgebra, $i=1,2$.
	Since $k$ is a field, we can choose a section of $C_i$ in $A_i$, and a theorem
	of Milnor and Moore \cite{MM65} states that there exist isomorphisms  
	\begin{equation}
		A_i\cong A_3\otimes C_i \label{eq:mm}
     \end{equation}
     as free left $A_3$-modules and right $C_i$ comodules. Now let $\wt{C}_i$ be the cokernel of 
     $k\to C_i$, $i=1,2$, and we define 
	two sequences of modules $\{M_n^i\}_{n=1}^{\infty}$ inductively: 
$M_1^2=\wt{C}_1$, $M_2^2=\wt{C}_1\otimes \wt{C}_2$, 
\[M_{n}^2=\begin{cases}M_{n-1}^2\otimes \wt{C}_{1}=\underbrace{\wt{C}_1\otimes \wt{C}_2\otimes 
		\wt{C}_1\otimes\cdots\otimes \wt{C}_1}_{n} 
			& \text{$n$ odd}\\
			M_{n-1}^2\otimes \wt{C}_{2}=\underbrace{\wt{C}_1\otimes \wt{C}_2\otimes 
			\wt{C}_1\otimes\cdots\otimes \wt{C}_2}_{n}     &     \text{$n$ even},
		\end{cases}
	\]
	similarly $M_1^1=\wt{C}_2$ and
	\[M_{n}^1=\begin{cases}M_{n-1}^1\otimes \wt{C}_{2} & \text{$n$ odd}\\
			M_{n-1}^1\otimes \wt{C}_{1}     &     \text{$n$ even},
		\end{cases}
	\]
	then it follows from the  structure theorem of amalgams of connected Hopf algebras (see \cite[Theorem 5.1.4, pp. 105--107]{Lem74})
	that we have an isomorphism
	\begin{equation}
		A\cong A_3\otimes \left(k\oplus\bigoplus_{n=1}^{\infty}M_n^1\oplus\bigoplus_{n=1}^{\infty}M_n^2
		\right) \label{iso:A1}
\end{equation}
 of left $A_3$-modules, as well as isomorphisms
 \begin{equation}
	 A\cong A_i\otimes \left(k\oplus\bigoplus_{n=1}^{\infty}M_n^i\right) \label{iso:Ai}
 \end{equation}
 of left $A_i$-modules, $i=1,2$. The following theorem is a conclusion.
 \begin{thm}[Lemaire-Serre\footnote{In the proof of this theorem Lemaire cited Serre's work as \emph{Cours Coll. France 1968-1969 (multigraphi\'{e})} (see \cite[pp. 105--107]{Lem74}).}]\label{thm:LS}
	 Let $ A_3\to A_i$, $i=1,2$ be two monomorphisms in $\mathcal{H}_k^c$. 
	 Suppose we specify a homogeneous basis $\mathcal{B}_3$ which spans $A_3$, as well as two sets 
	 $Z_i\subset A_i$ such that 
	 the canonical epimorphisms $A_i\to C_i=k\otimes_{A_3}A_i\to\wt{C}_i$ 
	 send $Z_i$ one-to-one onto a basis of $\wt{C}_i=\mathrm{coker}(k\to C_i)$, $i=1,2$. 
 Then the colimit $A$ of the diagram $A_1\leftarrow A_3\rightarrow A_2$ has a basis of the 
 form 
 \begin{equation}
          a_3c_{j_1,1}c_{j_2,2}\cdots c_{j_l,l}\label{def:left}
 \end{equation}
 with $a_3$ running through all elements in $\mathcal{B}_3$ and $c_{j_k,k}$ running through $Z_{j_k}$, such that $j_k\not=j_{k+1}$.
 
 Similarly $A$ has a basis of the form 
 \begin{equation}
 c_{j_l,l}\cdots c_{j_2,2}c_{j_1,1}a_3\label{def:right}
 \end{equation}
 if $C_i=A_i\otimes_{A_3}k$, $i=1,2$.
     \end{thm}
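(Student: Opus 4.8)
The plan is to read off the two bases \eqref{def:left} and \eqref{def:right} directly from the module isomorphisms \eqref{iso:A1} and \eqref{iso:Ai}, which already encode the amalgam structure of $A=A_1\coprod_{A_3}A_2$; there is essentially nothing to prove beyond making these isomorphisms explicit on the chosen bases. First I would fix the data in the statement: the homogeneous basis $\mathcal{B}_3$ of $A_3$, and the sets $Z_i\subset A_i$ whose images form bases of $\wt{C}_i=\mathrm{coker}(k\to C_i)$, $i=1,2$. With these choices the section of the Milnor--Moore splitting \eqref{eq:mm} may be taken so that the representative of each basis vector of $\wt{C}_i$ is the corresponding element of $Z_i$; consequently the isomorphism $A_3\otimes C_i\to A_i$ of \eqref{eq:mm} sends $a_3\otimes \bar{c}$ to the product $a_3 c$ with $c\in Z_i\cup\{1\}$.

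Next I would describe a basis of the right-hand factor of \eqref{iso:A1}. Since $M_n^1$ and $M_n^2$ are the alternating tensor products $\wt{C}_{j_1}\otimes\wt{C}_{j_2}\otimes\cdots\otimes\wt{C}_{j_n}$ with $j_k\not=j_{k+1}$ (one family starting with $\wt{C}_2$, the other with $\wt{C}_1$), a basis of $k\oplus\bigoplus_{n\ge 1}M_n^1\oplus\bigoplus_{n\ge 1}M_n^2$ is given by the empty tensor together with all $c_{j_1,1}\otimes\cdots\otimes c_{j_l,l}$, where $c_{j_k,k}\in Z_{j_k}$ and $j_k\not=j_{k+1}$. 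Tensoring on the left with $\mathcal{B}_3$ and applying \eqref{iso:A1} then produces a vector-space basis of $A$ indexed by pairs $(a_3,(c_{j_k,k})_{k=1}^l)$.

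The key step, and the place where the argument really has content, is to identify the image of such a basis tensor under \eqref{iso:A1} with the ordered product $a_3 c_{j_1,1}c_{j_2,2}\cdots c_{j_l,l}$ in $A$, i.e.\ with the normal form \eqref{def:left}. I expect this to be the main obstacle, since \eqref{iso:A1} is a priori only an isomorphism of left $A_3$-modules and need not be multiplicative on the nose. The way I would settle it is by an induction on the length $l$ using the single-factor splitting \eqref{iso:Ai}: peeling off the leftmost tensor factor realizes it as multiplication by the chosen representative in $Z_{j_1}$ (as recorded above), while the remaining factor is again of the amalgam form, so the inductive hypothesis applies. This reduces the whole claim to the compatibility of the Milnor--Moore sections with multiplication, which is exactly what the choice of the $Z_i$ guarantees.

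Finally, the second assertion, that $A$ also admits the mirrored basis \eqref{def:right} when $C_i=A_i\otimes_{A_3}k$, I would obtain by running the same argument for the right-module versions of \eqref{eq:mm} and \eqref{iso:A1}; equivalently, by applying the first part to the opposite algebra $A^{\mathrm{op}}$, under which right coset representatives correspond to left ones. No new ideas are needed for this half once the left-handed case is in place.
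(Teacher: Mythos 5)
Your proposal follows the same route as the paper: the paper gives no separate proof of Theorem \ref{thm:LS}, presenting it as an immediate conclusion of the Milnor--Moore splitting \eqref{eq:mm} and the isomorphisms \eqref{iso:A1}, \eqref{iso:Ai} imported from Lemaire's structure theorem for amalgams, which is exactly the reduction you carry out. The only difference is that you make explicit the (correct) point that these module isomorphisms are realized by multiplication of the chosen representatives from $\mathcal{B}_3$ and $Z_i$, a detail the paper leaves implicit.
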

\begin{proof}[Proof of Theorem \ref{thm:Hopfmain}]
The proof is again an induction on the number $m$ of vertices of $K$.
When $K$ is a simplex, $\underline{A}^K=\bigotimes_{i=1}^m A_i$. Clearly locally minimal elements 
$a=\prod_{i=1}^ns_{i_1,1}\ldots s_{i_n,n}$ with $1\leq i_1<i_2<\ldots<i_n\leq m$ give a basis.  
Otherwise we decompose $K=K_1\cup_{K_3} K_2$ as a non-trivial union of flag subcomplexes which are full in $K$, by Lemma \ref{lem:L11}. 
We have  $\underline{A}^K=\underline{A}^{K_1}\coprod_{\underline{A}^{K_3}}\underline{A}^{K_2}$ by 
Lemma \ref{lem:decA}, and 
suppose the statement already holds for $\underline{A}^{K_i}$, $i=1,2,3$. We relabel the vertices if necessary, 
so that  $v_3<v_1<v_2$, for every $v_3\in V_3$, $v_i\in V_i\setminus V_3$, $i=1,2$, 
where $V_i=\mathrm{Vert}(K_i)$. The induction hypothesis implies that
a basis of the quotient $C_i=k\otimes_{\underline{A}^{K_3}}\underline{A}^{K_i} $ 
can be chosen as locally minimal elements, in particular, each basis element has the minimal word length among all representatives in the same equivalent class. 
 Moreover, using a similar argument as in the proof of Theorem \ref{thm:normal}, it can be checked that an element of the form
\begin{equation}
       a=a_3c_{j_1,1}c_{j_2,2}\ldots c_{j_l,l},\label{def:a}
\end{equation}
with $a_3$ a locally minimal in $\underline{A}^{K_3}$ and $c_{j_k,k}$ locally minimal in  
$\wt{C}_{j_k}$ where $j_k\in \{1,2\}$, $k=1,\ldots,l$, is again locally minimal in $\underline{A}^K$. Finally, the proof is completed by
the induction, together with Theorem \ref{thm:LS}.  
\end{proof}

\end{document}